\newtheorem{theorem}{Theorem}[section]
\newtheorem{lemma}[theorem]{Lemma}
\newtheorem{proposition}[theorem]{Proposition}
\newtheorem{corollary}[theorem]{Corollary}
\theoremstyle{definition}
\newtheorem{example}[theorem]{Example}
\theoremstyle{remark}
\newtheorem{remark}[theorem]{Remark}
\numberwithin{equation}{section}
\begin{document}
\setcounter{page}{1}

\title[Harnack parts]{Harnack parts of $\displaystyle \rho$-contractions}
\author[G. Cassier, M. Benharrat, S. Belmouhoub]{Gilles Cassier$^{1*}$, Mohammed Benharrat$^2$ and Soumia Belmouhoub$^3 $}
\address{$^{1}$ Universit\'e de Lyon 1; Institut Camille Jordan CNRS UMR 5208; 43, boulevard du 11 Novembre 1918, F-69622 Villeurbanne.
}

\email{\textcolor[rgb]{0.00,0.00,0.84}{cassier@math.univ-lyon1.fr}}
\address{$^{2}$ D\'epartement de Math\'ematiques et informatique, Ecole Nationale Polytechnique d'Oran (Ex. ENSET d'Oran);  B.P. 1523  Oran-El M'Naouar, Oran, Alg\'erie.
}
\email{\textcolor[rgb]{0.00,0.00,0.84}{mohammed.benharrat@enp-oran.dz}}
\address{$^{3}$ D\'epartement de Math\'ematiques, Universit\'e de Mostaganem; Alg\'erie.
}
\email{\textcolor[rgb]{0.00,0.00,0.84}{belmsou@yahoo.fr}}


\subjclass[2010]{Primary 47A10, 47A12, 47A20, 47A65; Secondary 15A60.}

\keywords{$\rho$-contractions, Harnack parts, operator kernel, 
operator radii, numerical range.}

\date{14/12/2016.
\newline \indent $^{*}$ Corresponding author}
\begin{abstract}
The purpose of this paper is to describe the Harnack parts for the operators of class $C_{\rho}$ ($\rho>0$) on Hilbert spaces which were introduced by B. Sz. Nagy and C. Foias in \cite{SzNF}. More precisely, we study Harnack parts of operators with $\rho$-numerical radius one. The case of operators with $\rho$-numerical radius strictly less than $1$ was described in \cite{CaSu}. We obtain a general criterion for compact $\rho$-contractions to be in the same Harnack part. We give a useful equivalent form of this criterion for usual contractions. Operators with numerical radius one received also a particular attention. Moreover, we study many properties of Harnack equivalence in the general case.
\end{abstract}
 \maketitle
\section{Introduction and preliminaries}
Let $H$ be a complex Hilbert space and  $B(H)$ the set of all bounded linear operators on  $H$.
For $\rho>0$, we say that an operator $T\in B(H)$ admits  a \emph{unitary $\rho$-dilation} if there is a Hilbert space $\mathcal{H}$ containing $H$ as a closed subspace and a unitary operator $U\in B(\mathcal{H})$ such that 
\begin{equation}\label{eq:rhodil}
T^n=\rho P_H U^n|H,\quad
n\in\mathbb{N}^{\ast},
\end{equation}
where $P_H$ denotes the orthogonal projection onto the subspace $H$ in $\mathcal{H}$. 

In the sequel, we denote by $C_\rho (H),\ \rho>0$, the set of all operators in $B(H)$ which admit unitary $\rho$-dilations. A famous theorem due to B.~Sz.-Nagy \cite{SzN} the asserts that $C_1(H)$ is exactly the class of all contractions, i.e., operators
$T$ such that $\| T\|\leq 1$.  C.~A.~Berger \cite{Be} showed that
the class $C_2(H)$ is precisely the class of all operators $T\in
B(H)$ whose
the \emph{numerical radius}
\[ w(T)=\sup\{ |\langle Tx,x\rangle |:\ x\in H,\ \| x\|
=1\}\] is less or equal to one. In particular, the classes $C_\rho (H),\ \rho>0$,
provide a framework for simultaneous investigation of these two
important classes of operators.
Any operator $T\in C_\rho (H)$ is
\emph{power-bounded}:
\begin{equation}\label{eq:1-pb}
 \| T^n\|\leq \rho,\quad n\in\mathbb{N},
\end{equation}
moreover, its \emph{spectral radius}
\begin{equation}\label{eq:1-sp-rad}
r(T)=\lim_{n\rightarrow+\infty}\| T^n\|^{\frac{1}{n}}
\end{equation}
 is at most one. In \cite{SzNF1}, an example of a power-bounded operator which
is not contained in any of the classes $C_\rho (H),\ \rho>0$, is
given. However, J.~A.~R.~Holbrook \cite{H1} and J.~P.~Williams \cite{W},
independently, introduced the 
$\rho$-\emph{numerical radius}
(or the \emph{operator radii} ) of an operator $T\in B(H)$ by setting
\begin{equation}\label{eq:1-rad}
 w_\rho(T):=\inf\{ \gamma>0:\frac{1}{\gamma}T\in C_\rho (H)\}.
\end{equation}
Note that $w_1 (T)=\left\|T\right\|$, $w_2 (T)=w(T)$ and $ \lim_{\rho\rightarrow\infty}w_\rho(T)=r(T)$. Also, $T\in C_\rho (H)$ if and only if $w_\rho(T)\leq 1$, hence operators in $ C_\rho (H)$ are contractions with respect to the $\rho$-\emph{numerical radius}, and the elements of $ C_\rho (H)$ are called $\rho$-contractions. 

Some properties of the classes $C_\rho (H)$ become more clear (see for instance, \cite{CaF_1},\cite{CaF_2}, \cite{CaSu} and \cite{Cassier_2}) due to the use of the following operatorial $\rho$-kernel for  a bounded operator $T$ having its spectrum in the closed unit disc $\overline{\mathbb{D}}$, harmonic method in operator analysis introduced and first systematically developed in \cite{Cassier_1, CaF_1, CaF_2}:
\begin{equation}
K_{z}^{\rho} (T) =(I-\overline{z}T)^{-1}+(I-zT^{*})^{-1}+ (\rho-2)I, \quad (z\in \mathbb{D}).
\end{equation}

The $\rho$-kernels are related to $\rho$-contraction by the next result. An operator $T$ is in the class $C_\rho (H)$ if and only if, $\sigma(T)\subseteq \overline{\mathbb{D}}$ and
$K_{z}^{\rho} (T)\geq 0$ for any $z\in \mathbb{D}$  (see \cite{CaF_2}).

We say that $T_1$ is Harnack dominated by $T_0 $, if $T_0$ and $T_1$ satisfy one of the following equivalent conditions of the following theorem:
 \begin{theorem}\cite[ Theorem 3.1]{CaSu} \label{harnack} For $T_0 , T_1 \in C_\rho (H)$ and a constant $c\geq 1$, the following statements are equivalent:
\begin{enumerate}
	\item [(i)] $Re p(T_1) \leq c^{2} Re p(T_0)+ (c^{2}-1)(\rho -1) Re p(O_H)$, fo any polynomial $p$ with $Re p\geq 0$ on $\overline{\mathbb{D}}$.
	\item [(ii)] $Re p(rT_1)\leq c^{2} Re p(rT_0)+ (c^{2}-1)(\rho -1) Re p(O_H)$, fo any $r \in \left]0,1\right[$ and each  polynomial $p$ with $Re p\geq 0$ on $\overline{\mathbb{D}}$.
	\item [(iii)]  $K_{z}^{\rho} (T_1) \leq c^{2} K_{z}^{\rho} (T_0)$,  for all  $z\in \mathbb{D}$.
	\item [(iv)] $\varphi_{T_{1}} (g)\leq c^{2}\varphi_{T_{0}} (g)$ for any function $g\in C(\mathbb{T})$ such that $g\geq 0$ on $\mathbb{T}=\overline{\mathbb{D}} \setminus \mathbb{D}$.
	\item [(v)] If $V_i$ acting on $K_i \supseteq H$ is the minimal isometric $\rho$-dilation of $T_i$ $(i=0, 1)$, then there is an operator $S\in B(K_0 , K_1 )$ such that $S(H) \subset H$, $S|_{H}=I$, $SV_0 =V_{1}S$ and $\left\|S\right\|\leq c$.
\end{enumerate}
 \end{theorem}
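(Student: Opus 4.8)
The plan is to prove the chain $(i)\Leftrightarrow(ii)\Leftrightarrow(iii)\Leftrightarrow(iv)$ and then $(iii)\Leftrightarrow(v)$. The equivalence $(i)\Leftrightarrow(ii)$ is formal: $(i)\Rightarrow(ii)$ by applying $(i)$ to the polynomial $z\mapsto p(rz)$, which still satisfies $\operatorname{Re}\ge 0$ on $\overline{\mathbb{D}}$ and has the same value at $0$; and $(ii)\Rightarrow(i)$ by letting $r\to 1^{-}$, since $p(rT_j)\to p(T_j)$ in operator norm for a polynomial $p$. For $(ii)\Leftrightarrow(iii)$ I would invoke the Herglotz representation of polynomials with positive real part: if $\operatorname{Re}p\ge 0$ on $\overline{\mathbb{D}}$ then $\operatorname{Re}p(z)=\int_{\mathbb{T}}P(z,w)q(w)\,dm(w)$, where $P(z,w)$ is the Poisson kernel, $dm$ is normalized arc length on $\mathbb{T}$, and $q=\operatorname{Re}p|_{\mathbb{T}}\ge 0$ is a trigonometric polynomial with $\int_{\mathbb{T}}q\,dm=\operatorname{Re}p(0)$. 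Together with the elementary resolvent identity $\tfrac12\bigl[(wI+S)(wI-S)^{-1}+(\overline{w}I+S^{*})(\overline{w}I-S^{*})^{-1}\bigr]=K_{w}^{\rho}(S)-(\rho-1)I$ (valid when $r(S)<1$), the holomorphic functional calculus gives $\operatorname{Re}p(S)=\int_{\mathbb{T}}K_{w}^{\rho}(S)q(w)\,dm(w)-(\rho-1)\operatorname{Re}p(0)I$. Inserting $S=rT_j$ (so $r(S)\le r<1$) into $(ii)$, the terms carrying $(\rho-1)\operatorname{Re}p(0)$ cancel and $(ii)$ becomes $\int_{\mathbb{T}}K_{w}^{\rho}(rT_1)q\,dm\le c^{2}\int_{\mathbb{T}}K_{w}^{\rho}(rT_0)q\,dm$ for all $r\in(0,1)$ and all non-negative trigonometric polynomials $q$. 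Since these are uniformly dense in $\{g\in C(\mathbb{T}):g\ge 0\}$ and $w\mapsto K_{w}^{\rho}(rT_j)$ is continuous on $\mathbb{T}$, this is equivalent to $K_{w}^{\rho}(rT_1)\le c^{2}K_{w}^{\rho}(rT_0)$ for all $w\in\mathbb{T}$, $r\in(0,1)$, which by the scaling identity $K_{w}^{\rho}(rT)=K_{rw}^{\rho}(T)$ is precisely $(iii)$.

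To establish $(iii)\Leftrightarrow(iv)$ I would exploit that $\varphi_{T}$ reproduces the $\rho$-kernels, $\varphi_{T}(P(z,\cdot))=K_{z}^{\rho}(T)$ for $z\in\mathbb{D}$. Hence $(iv)\Rightarrow(iii)$ on taking $g=P(z,\cdot)\ge 0$. For the converse, given $g\ge 0$ in $C(\mathbb{T})$ put $g_{r}(\zeta)=\int_{\mathbb{T}}P(r\zeta,\xi)g(\xi)\,dm(\xi)\ge 0$, the Abel mean of $g$, which converges to $g$ uniformly as $r\to 1^{-}$; a short Fourier computation yields $\varphi_{T}(g_{r})=\int_{\mathbb{T}}K_{r\overline{\xi}}^{\rho}(T)g(\xi)\,dm(\xi)$. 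Since $r\overline{\xi}\in\mathbb{D}$ and $g\ge 0$, $(iii)$ gives $\varphi_{T_1}(g_{r})\le c^{2}\varphi_{T_0}(g_{r})$, and $r\to 1^{-}$ together with the bound $\|\varphi_{T_j}(g)\|\le\rho\|g\|_{\infty}$ produces $(iv)$.

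The substantive step is $(iii)\Leftrightarrow(v)$. Write $K_{j}$ for the closed linear span of $\{V_{j}^{n}h:n\ge 0,\ h\in H\}$. The bridge is the identity
\[ \rho\Bigl\|\sum_{n=0}^{N}V_{j}^{n}g_{n}\Bigr\|^{2}=\lim_{r\to 1^{-}}\frac{1}{2\pi}\int_{0}^{2\pi}\bigl\langle K_{re^{it}}^{\rho}(T_{j})h(e^{it}),h(e^{it})\bigr\rangle\,dt,\qquad h(w)=\sum_{n=0}^{N}w^{n}g_{n}, \]
valid for every finite family $g_{0},\dots,g_{N}\in H$, obtained by expanding both sides into the coefficients $\langle T_{j}^{m-n}g_{m},g_{n}\rangle$ via $T_{j}^{m}=\rho P_{H}V_{j}^{m}|_{H}$ ($m\ge 1$). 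Granting $(iii)$, integrating the pointwise inequality $\langle K_{re^{it}}^{\rho}(T_{1})h,h\rangle\le c^{2}\langle K_{re^{it}}^{\rho}(T_{0})h,h\rangle$ over $t$ and letting $r\to 1^{-}$ gives $\|\sum V_{1}^{n}g_{n}\|\le c\,\|\sum V_{0}^{n}g_{n}\|$ for every finite family; thus $S(\sum V_{0}^{n}g_{n}):=\sum V_{1}^{n}g_{n}$ is well defined on a dense subspace of $K_{0}$ and extends to $S\in B(K_{0},K_{1})$ with $\|S\|\le c$, $S|_{H}=I$ (case $N=0$), $S(H)\subseteq H$ and $SV_{0}=V_{1}S$, which is $(v)$. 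Conversely $(v)$ gives the same norm inequality; specialising $g_{n}=a_{n}u$ with $u\in H$ and $f(w)=\sum a_{n}w^{n}$, and using the Herglotz representation $\langle K_{z}^{\rho}(T_{j})u,u\rangle=\int_{\mathbb{T}}P(z,w)\,d\mu_{j,u}(w)$ of the non-negative harmonic function on the left ($\mu_{j,u}\ge 0$, $\mu_{j,u}(\mathbb{T})=\rho\|u\|^{2}$), the displayed identity collapses to $\int_{\mathbb{T}}|f|^{2}\,d\mu_{1,u}\le c^{2}\int_{\mathbb{T}}|f|^{2}\,d\mu_{0,u}$. By the Fej\'er--Riesz theorem the functions $|f|^{2}$, $f$ an analytic polynomial, are exactly the non-negative trigonometric polynomials, uniformly dense in $\{g\in C(\mathbb{T}):g\ge 0\}$; hence $\mu_{1,u}\le c^{2}\mu_{0,u}$ for every $u$, and the Poisson formula then recovers $K_{z}^{\rho}(T_{1})\le c^{2}K_{z}^{\rho}(T_{0})$, i.e.\ $(iii)$.

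The routine parts are the polynomial substitutions and the density arguments. The main obstacle is the analysis behind $(iii)\Leftrightarrow(v)$: establishing the displayed norm/integral identity from the $\rho$-dilation relations, and, in the harder direction $(v)\Rightarrow(iii)$, recognising through Fej\'er--Riesz factorisation and the Herglotz measures $\mu_{j,u}$ of the positive harmonic functions $z\mapsto\langle K_{z}^{\rho}(T_{j})u,u\rangle$ that this averaged inequality is equivalent to the pointwise one. Throughout, the Abel limits $r\to 1^{-}$ (norm convergence of $p(rT_{j})$ and of the circle integrals, plus a uniform bound to apply dominated convergence in the last step) have to be justified, which is where most of the care lies.
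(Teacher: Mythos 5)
The paper does not actually prove this statement: it is imported verbatim from \cite[Theorem 3.1]{CaSu}, so there is no internal proof to compare against. Your argument is a correct reconstruction along essentially the same lines as that reference --- the Poisson/Herglotz representation tying (i)--(iv) together through the kernel $K_{z}^{\rho}$ (with the constant $(c^{2}-1)(\rho-1)\operatorname{Re}p(O_H)$ accounted for by the identity $\operatorname{Re}p(S)=\int_{\mathbb{T}}K_{w}^{\rho}(S)\,\operatorname{Re}p(w)\,dm(w)-(\rho-1)\operatorname{Re}p(0)I$), and the norm identity $\rho\bigl\Vert\sum_{n}V^{n}g_{n}\bigr\Vert^{2}=\lim_{r\to1^{-}}\int\langle K_{re^{it}}^{\rho}(T)h(e^{it}),h(e^{it})\rangle\,dm(t)$ combined with Fej\'er--Riesz for the equivalence with (v).
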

 When $T_1$ is Harnack dominated by $T_0$ in $C_\rho (H)$ for some constant $c \geq 1$, we write $T_1\stackrel{H}{\underset{c}{\prec}}T_0$, or also $T_1{\stackrel{H}{\prec}} T_0$.
 The relation ${\stackrel{H}{\prec}}$ is a preorder relation in $C_\rho (H)$. The induced equivalent relation is called Harnack equivalence, and the associated classes are called the Harnack parts of $C_\rho (H)$. So, we say that $T_1$ and $T_0$ are Harnack equivalent if they belong to the same Harnack parts. In this later case, we write $T_1  {\stackrel{H}{\sim}} T_0$.
 
We say that an operator $T\in C_\rho (H)$ is a strict $\rho-$contraction if $w_{\rho} (T)<1$. In  \cite{Foias}  C. Foia\c{s} proved that  the Harnack parts of contractions containing the null operators $O_H$ consists of all strict contractions. More recently, G. Cassier and N. Suciu proved in \cite[Theorem 4.4]{CaSu} that the Harnack parts of $C_\rho (H)$ containing the null operators $O_H$ consists of all strict $\rho-$contractions. According to this fact the following natural question arises:

\textit{If $T$ an operator with $\rho$-numerical radius one, what can be said about the Harnack part of $T$?}

Recall that a $\rho$-contractions is similar to a contraction \cite{SzNF2}
but many properties are not preserved under similarity
(and an operator similar to a contraction is not necessarily a $\rho$-contraction!), in particular it is true for the numerical range properties. Thus, the study of Harnack parts for $\rho$-contractions cannot be deduced from the contractions case. Notice also that some properties are of different nature (see for instance Theorem \ref{spectral+torus} and Remark \ref{per_spect}).
 We  find a  few answers in the literature of the previous question, essentially  in the class of contractions  with norm one. In \cite{AnSuTi}, the authors have proved that if $T$ is either isometry or coisometry contraction then the Harnack part of $T$ is trivial (i.e. equal to $\{T\}$), and if $T$ is compact or $r(T)<1$, or normal and nonunitary, then its Harnack  part is not trivial in general. The authors have asked that  it seems interesting to give necessary and/ or sufficient condition for a contractions to have a trivial Harnack part.  It was proved in \cite{KSS} that the Harnack part of a contraction $T$ is trivial  if and only if $T$ is an isometry or a coisometry (the adjoint of an isometry), this  a response of the question posed by T. Ando and al. in the class  of contractions. Recently the authors  of \cite{BaTiSu} proved that maximal elements for the  Harnack domination in $C_1(H)$ are precisely the singular unitary operators and the minimal elements are isometries and coisometries.

This paper  is a continuation and refinement of the research treatment of  the Harnack domination in the general case of the $\rho-$contractions. Note that this treatment  yields certain useful properties and new techniques for studies of the Harnack parts of an operator with $\rho$-numerical radius one. More precisely,  
we show that two  $\rho$-contractions belong to the same Harnack parts have the same spectral values in $\mathbb{T}$. This property has several consequences and applications. In particular, it will be shown that if $T$ is a compact 
(i.e. $T \in \mathcal{K}(H)$) with $ w_\rho(T)=1$  and whose spectral radius is strictly less than one, then a $\rho$-contraction $S \in \mathcal{K}(H)$ is Harnack equivalent to $T$ if and only if $K_{z}^{\rho} (S)$ and $K_{z}^{\rho} (T)$ have the same kernel for all $z\in\mathbb{T}$. As a corollary, in the case of contractions, we show that if $T$ is a compact contraction with $ \Vert T\Vert=1$  and $r(T)<1$, then a contraction
$S \in \mathcal{K}(H)$ is Harnack equivalent to $T$ if and only if $I-S^*S$ and
$I-T^*T$ have the same kernel and $S$ and $T$ restricted to the kernel of $I-T^*T$ coincide. A nice application is the  description  of the Harnack part of the (nilpotent) Jordan block of size $n$. We also obtain precise results about the relationships between the closure of the numerical range and the Harnack domination for every $\rho \in \left[ 1,2\right] $. The case of $\rho=2$ plays a crucial role. We characterizes the weak stability of a $\rho-$contraction in terms of its minimal isometric $\rho-$dilation. The details of these basic facts are explained in Section \ref{sec:2}. In the last section we apply the results in the foregoing section to  describe the Harnack part of some nilpotent matrices with numerical radius one, in three cases: a nilpotent matrix of order two  in the two dimensional case,  a nilpotent matrix of order two  in $\mathbb{C}^n$ and a nilpotent matrix of order three in the three dimensional case. In particular, we show that in the first case the Harnack part is trivial, while in the  third case the
Harnack part is an orbit associated with the action of a group of unitary 
diagonal matrices.

\section{Main results}\label{sec:2}
\subsection{Spectral properties and Harnack domination}

We denote by $\Gamma(T)$ the set of complex numbers defined by $\Gamma(T)=\sigma(T) \cap \mathbb{T}$, where $\mathbb{T}=\overline{\mathbb{D}}\setminus\mathbb{D}$ is the unidimensional torus. In the following results, we prove that $\rho$-contractions belonging to the same Harnack parts have the same spectral values in the torus.

\begin{theorem}\label{spectral+torus}
Let $T_0 , T_1 \in C_{\rho}(H)$, ($\rho \geq 1$), if $T_1 {\stackrel{H}{\prec}} T_0$ then $\Gamma(T_1) \subseteq \Gamma(T_0)$.
\end{theorem}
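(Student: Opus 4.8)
The plan is to use the Harnack domination characterization (iii) from Theorem \ref{harnack}, namely $K_z^\rho(T_1)\leq c^2 K_z^\rho(T_0)$ for all $z\in\mathbb{D}$, together with the fact that membership in $C_\rho(H)$ forces both spectra to lie in $\overline{\mathbb{D}}$, so that $\Gamma(T_i)=\sigma(T_i)\cap\mathbb{T}$ records exactly the boundary spectrum. The contrapositive is the natural target: I would fix $\lambda\in\mathbb{T}$ with $\lambda\notin\Gamma(T_0)$, i.e.\ $\lambda\notin\sigma(T_0)$, and show $\lambda\notin\sigma(T_1)$. The key analytic input is that $\lambda\notin\sigma(T_0)$ lets one control the operator kernel $K_z^\rho(T_0)$ as $z\to\lambda$ radially: writing $z=r\lambda$ with $r\uparrow 1$, the resolvent $(I-\bar z T_0)^{-1}=(I-r\bar\lambda T_0)^{-1}$ stays bounded (indeed converges to $(I-\bar\lambda T_0)^{-1}$), hence $K_{r\lambda}^\rho(T_0)$ remains norm-bounded as $r\uparrow 1$.

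The main step is then to transfer this boundedness to $T_1$ via the inequality $0\leq K_{r\lambda}^\rho(T_1)\leq c^2 K_{r\lambda}^\rho(T_0)$, which gives a uniform bound $\sup_{0<r<1}\|K_{r\lambda}^\rho(T_1)\|<\infty$. The remaining point is to argue that such a uniform bound on the kernel along the radius to $\lambda$ is incompatible with $\lambda\in\sigma(T_1)$. Since $\mathrm{Re}\,(I-\bar z T_1)^{-1}=\tfrac12\bigl(K_z^\rho(T_1)-(\rho-2)I\bigr)+\text{(skew part cancels)}$, more precisely $K_z^\rho(T_1)=(I-\bar z T_1)^{-1}+(I-z T_1^*)^{-1}+(\rho-2)I = 2\,\mathrm{Re}\,(I-\bar z T_1)^{-1}+(\rho-2)I$, the uniform bound on $K_{r\lambda}^\rho(T_1)$ yields a uniform bound on $\mathrm{Re}\,(I-r\bar\lambda T_1)^{-1}$. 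One then shows that if $\lambda\in\sigma(T_1)$ this real part must blow up: either $\lambda$ is an approximate eigenvalue of $T_1$, in which case picking unit vectors $x_n$ with $(I-r\bar\lambda T_1)x_n\to 0$ appropriately (coupling the choice of $x_n$ with $r\uparrow1$) forces $\langle \mathrm{Re}\,(I-r\bar\lambda T_1)^{-1}x_n,x_n\rangle\to+\infty$; or $\lambda$ lies in $\sigma(T_1)$ as a point of the essential/surjectivity spectrum, for which the same quantitative growth of the resolvent's real part can be extracted from $\|(I-r\bar\lambda T_1)^{-1}\|\to\infty$ together with the positivity constraint (positivity of $K_{r\lambda}^\rho(T_1)$ prevents the singular growth from being purely in the imaginary/skew direction).

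The step I expect to be the main obstacle is precisely this last implication: converting ``$\lambda\in\sigma(T_1)$'' into ``$\mathrm{Re}\,(I-r\bar\lambda T_1)^{-1}$ is unbounded as $r\uparrow 1$''. For a general (non-normal) operator the resolvent can blow up in norm while having a controlled real part in principle, so the argument must exploit the positivity $K_{r\lambda}^\rho(T_1)\geq 0$ in an essential way. I would handle this by noting that $K_z^\rho(T_1)\geq 0$ is equivalent, after multiplying by the positive operator-valued factor, to a Poisson-type representation; concretely, for $w_\rho(T_1)\le 1$ one has $\mathrm{Re}\,(I-\bar z T_1)^{-1}\geq \tfrac{2-\rho}{2}I$ and, more to the point, the family $\{K_{r\lambda}^\rho(T_1)\}$ being bounded and positive forces, via a weak-$*$ limit of the associated operator-valued measures, the point $\lambda$ to carry no mass, which is exactly the statement $\lambda\notin\sigma(T_1)$; alternatively one invokes that a bounded positive harmonic (operator-valued) function on $\mathbb D$ extends through $\lambda$. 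Either route reduces the obstacle to a now-standard Herglotz/Fatou argument for operator-valued functions, and I would carry it out using the minimal isometric $\rho$-dilation $V_1$ from part (v) if a direct resolvent estimate proves awkward, since $\sigma(V_1)\cap\mathbb T$ governs $\sigma(T_1)\cap\mathbb T$ and the dilation makes the positivity transparent.
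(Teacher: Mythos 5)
Your overall strategy coincides with the paper's: use $K_z^\rho(T_1)\le c^2K_z^\rho(T_0)$, approach $\lambda\in\mathbb{T}$ radially along $z=r\lambda$, observe that $\lambda\notin\sigma(T_0)$ keeps $K_{r\lambda}^\rho(T_0)$ (hence, by positivity and domination, $K_{r\lambda}^\rho(T_1)$) uniformly bounded as $r\uparrow1$, and derive a contradiction from $\lambda\in\sigma(T_1)$. But the step you yourself flag as the main obstacle is where your write-up does not close, and the missing ingredient is a standard fact you never invoke: since $\sigma(T_1)\subseteq\overline{\mathbb{D}}$, every point of $\sigma(T_1)\cap\mathbb{T}$ lies on the topological boundary of $\sigma(T_1)$ and is therefore an approximate eigenvalue. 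This makes your first case the \emph{only} case; the ``surjectivity spectrum'' branch never occurs, and the entire closing paragraph (operator-valued Herglotz measures, extension through $\lambda$, passage to the dilation) is unnecessary. Worse, the Herglotz-type claim you lean on there is false as stated: for the unilateral shift $S$ one has $\langle K_z^1(S)e_0,e_0\rangle\equiv 1$, and the semi-spectral measure of $S$ is absolutely continuous with no point masses, yet $\sigma(S)\cap\mathbb{T}=\mathbb{T}$. Absence of a point mass at $\lambda$ does not imply $\lambda\notin\sigma(T_1)$, so that fallback cannot rescue the argument.

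With $\partial\sigma(T_1)\subseteq\sigma_{ap}(T_1)$ in hand, the quantitative step closes exactly as the paper does it, and essentially as your first case gestures at, modulo a slip: for approximate eigenvectors one has $(I-\overline{\lambda}T_1)x_n\to0$, not $(I-r\overline{\lambda}T_1)x_n\to0$ (the latter tends to $(1-r)x_n$). Writing $y_n=T_1x_n-\lambda x_n\to0$ and using $(I-r\overline{\lambda}T_1)^{-1}x_n=\frac{1}{1-r}\bigl[x_n+r\overline{\lambda}(I-r\overline{\lambda}T_1)^{-1}y_n\bigr]$, one gets for each fixed $r$ that $\lim_n\langle K_{r\lambda}^\rho(T_1)x_n,x_n\rangle=\frac{2}{1-r}+\rho-2$; equivalently, in the paper's factored form, the left-hand side $\rho+2(1-\rho)(1-t)+(\rho-2)(1-t)^2=2t+(\rho-2)t^2$ vanishes only to first order in $t=1-r$ while the dominating side carries the factor $|1-\overline{z}\lambda|^2=t^2$. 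Letting $r\uparrow1$ then contradicts $\sup_{0<r<1}\Vert K_{r\lambda}^\rho(T_1)\Vert<\infty$. So your proposal contains the correct mechanism, but as written it has a genuine gap: you must prove or cite $\partial\sigma\subseteq\sigma_{ap}$ to reduce to the approximate-eigenvector case, carry out the rate comparison above, and discard the measure-theoretic and dilation alternatives, which do not work.
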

\begin{proof}
 Let $T_0 , T_1 \in C_{\rho}(H)$ be such that $T_1 {\stackrel{H}{\prec}} T_0$. Then there exists  $c\geq 1$ such that 
 \begin{equation}
 K_{z}^{\rho} (T_1) \leq c^{2} K_{z}^{\rho} (T_0), \quad \text{ for all } z\in \mathbb{D},
 \end{equation} 
 so,  
\begin{align*}
K_{z}^{\rho} (T_1)& =(I-zT^{*}_{1})^{-1}[ \rho I+2(1-\rho) Re (\overline{z}T_1)+(\rho-2) \left|z\right|^{2} T^{*}_{1}T_{1}] (I-\overline{z}T_{1})^{-1}\\
& \leq c^{2} K_{z}^{\rho} (T_0), \quad \text{ for all } z\in \mathbb{D}.	
\end{align*}
Hence
\begin{equation}\label{inequ1}
 \rho I+2(1-\rho) Re (\overline{z}T_1)+(\rho-2) \left|z\right|^{2} T^{*}_{1}T_{1} \leq c^{2}(I-zT^{*}_{1})K_{z}^{\rho} (T_0)(I-\overline{z}T_{1}),
  \end{equation} 
for all  $z\in \mathbb{D}$. Now, let  $\lambda =e^{i\omega}\in \Gamma(T_1)\subseteq \sigma_{ap}(T_1)$, then there exists a sequence $(x_n)_{n\geq 0}$ of unit vectors such that $T_1 x_n -e^{i\omega}x_n =y_n$ converge to $0$. From the inequality \eqref{inequ1}, we derive
\begin{align*}
 \rho I+2(1-\rho)& Re (\overline{z}\left\langle T_1 x_n, x_n \right\rangle)+(\rho-2) \left|z\right|^{2} \left\|T_{1}x_n\right\|^{2} \\ &\leq c^{2} \left\langle K_{z}^{\rho} (T_0)(I-\overline{z}T_{1})x_n ,(I-\overline{z}T_{1})x_n\right\rangle\\
 & = c^{2} \left\langle K_{z}^{\rho} (T_0)[(1-\overline{z}e^{i\omega})x_n-\overline{z}y_n ] ,(1-\overline{z}e^{i\omega})x_n-\overline{z}y_n\right\rangle\\
 &= c^{2}\left|1-\overline{z}e^{i\omega}\right|^{2}\left\langle K_{z}^{\rho} (T_0)x_n ,x_n\right\rangle -c^{2}z  (1-\overline{z}e^{i\omega})\left\langle K_{z}^{\rho} (T_0)x_n ,y_n\right\rangle \\
 & - c^{2}\overline{z}(1-ze^{-i\omega})\left\langle K_{z}^{\rho} (T_0)y_n, x_n\right\rangle + c^{2}\left|z\right|^{2}\left\langle K_{z}^{\rho} (T_0)y_n ,y_n\right\rangle,
\end{align*} 
 for any $z\in \mathbb{D}$ and all $n\geq 0$. Note that 
 $$
 \left| \left\| T_1 x_n -e^{i\omega}x_n\right\|- \left\| x_n\right\|\right|\leq \left\| T_1 x_n \right\|\leq \left\| T_1 x_n -e^{i\omega}x_n\right\| +1.
 $$
Letting $n \rightarrow +\infty$, from the two previous inequalities we obtain 
 \begin{equation*}
 \rho +2(1-\rho) Re (\overline{z} e^{i\omega})+(\rho-2) \left|z\right|^{2}\leq c^{2}\left|1-\overline{z}e^{i\omega}\right|^{2} 
 \limsup_{n \rightarrow +\infty}\left\langle K_{z}^{\rho} (T_0)x_n ,x_n\right\rangle,
 \end{equation*}
 for any $z\in \mathbb{D}$. Then, if we take  $z=(1-t)e^{i\omega}$ with $0<t<1$, we get
 \begin{equation*}
 \rho +2(1-\rho) (1-t)+(\rho-2) (1-t)^{2}\leq c^{2}t^{2} \limsup_{n \rightarrow +\infty}\left\langle K_{(1-t)e^{i\omega}}^{\rho} (T_0)x_n ,x_n\right\rangle.
 \end{equation*}
 Assume that $e^{i\omega}\notin \Gamma(T_0)$, then $ K_{(1-t)e^{i\omega}}^{\rho} (T_0)$ is uniformly bounded in $ \left]0,1 \right[$, then there exists $\gamma >0$ such that 
  $$
  \rho +2(1-\rho) (1-t)+(\rho-2) (1-t)^{2}\leq \gamma c^{2}t^{2},
  $$
  which implies
 $$
   2t\leq (\gamma c^{2}+2-\rho) t^{2},
  $$  
  for all $t>0$, and hence
  $$
  2\leq (\gamma c^{2}+2-\rho) t.
  $$
  Now, we get a contradiction by letting $t \rightarrow 0$. Hence $e^{i\omega}\in \Gamma(T_0)$.
\end{proof}
From Theorem \ref{spectral+torus}, we also obtain the following result
\begin{corollary}\label{harnackequ}
If  $T_1$ and $T_0$ are Harnack equivalent in $C_{\rho}(H)$ then $\Gamma(T_1)= \Gamma(T_0)$.
\end{corollary}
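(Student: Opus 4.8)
The plan is to deduce this directly from Theorem \ref{spectral+torus} by exploiting the symmetry built into the notion of Harnack equivalence. Recall that, by the definition of the Harnack parts given above, saying that $T_1$ and $T_0$ are Harnack equivalent in $C_\rho(H)$ means precisely that both $T_1 \stackrel{H}{\prec} T_0$ and $T_0 \stackrel{H}{\prec} T_1$ hold (each with its own constant $c \geq 1$).

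First I would apply Theorem \ref{spectral+torus} to the domination $T_1 \stackrel{H}{\prec} T_0$, which immediately yields $\Gamma(T_1) \subseteq \Gamma(T_0)$. Then I would apply the very same theorem to the reverse domination $T_0 \stackrel{H}{\prec} T_1$, obtaining $\Gamma(T_0) \subseteq \Gamma(T_1)$. Combining the two inclusions gives $\Gamma(T_1) = \Gamma(T_0)$, which is the assertion.

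There is essentially no obstacle here: all the analytic content — the passage to an approximate eigenvector for a point of $\Gamma(T_1)$, the uniform boundedness of $K^{\rho}_{(1-t)e^{i\omega}}(T_0)$ on $\left]0,1\right[$ when $e^{i\omega}\notin\Gamma(T_0)$, and the contradiction obtained by letting $t\to 0$ — is already carried out in the proof of Theorem \ref{spectral+torus}. The only points to check are that the standing hypothesis $\rho\geq 1$ required by that theorem is in force (it is, by assumption), and that the induced equivalence relation is genuinely the symmetrization of the preorder $\stackrel{H}{\prec}$, which is exactly how the Harnack parts were introduced. Hence the corollary follows at once.
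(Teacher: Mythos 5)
Your proof is correct and is exactly the argument the paper intends: the corollary is stated as an immediate consequence of Theorem \ref{spectral+torus}, obtained by applying that theorem to both dominations $T_1 \stackrel{H}{\prec} T_0$ and $T_0 \stackrel{H}{\prec} T_1$ and combining the two inclusions. Nothing further is needed.
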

Let $T\in B(H)$ and $E$ be a closed invariant subspace of $T$, ($T(E)\subset E$). Then $T\in B(E\oplus E^{\bot})$, has the following form:
$$T= \begin{pmatrix}
T_1 & R \\
0 & T_2
\end{pmatrix},$$
with $T_1\in B(E)$, $T_2\in B( E^{\bot})$ and $R$ is a bounded operator from $E^{\bot}$ to $E$.
 We denote by $\Gamma_p(T)=\sigma_p(T) \cap \mathbb{T}$ the  point spectrum of  $T \in C_{\rho}(H)$ in the unidimensional torus. 
\begin{theorem}\label{thm:spp}
Let $T_0 , T_1 \in C_{\rho}(H)$ ($\rho \geq 1$), if $T_1 {\stackrel{H}{\prec}} T_0$ then $\Gamma_p(T_1) \subseteq \Gamma_p(T_0)$ and $Ker(T_1 -\lambda I)\subseteq Ker(T_0 -\lambda I)$ for all  $\lambda \in \Gamma_p(T_1)$.
\end{theorem}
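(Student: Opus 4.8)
The plan is to reduce everything, via the kernel form of Harnack domination (Theorem~\ref{harnack}(iii)), to a rigidity statement about unimodular eigenvalues. The preliminary fact I would use is: \emph{if $S\in C_\rho(H)$, $\|x\|=1$ and $Sx=\mu x$ with $|\mu|=1$, then $S^{*}x=\bar\mu x$} (equivalently $\ker(S-\mu I)=\ker(S^{*}-\bar\mu I)$, which therefore reduces $S$). This follows from the positivity of the middle factor $M_{z}(S):=\rho I+2(1-\rho)\operatorname{Re}(\bar zS)+(\rho-2)|z|^{2}S^{*}S\ge0$ in the factorization $K_{z}^{\rho}(S)=(I-zS^{*})^{-1}M_{z}(S)(I-\bar zS)^{-1}$ already used in the proof of Theorem~\ref{spectral+torus}: one has $\langle M_{z}(S)x,x\rangle=\rho+2(1-\rho)\operatorname{Re}(\bar z\mu)+(\rho-2)|z|^{2}$, which tends to $0$ as $z=t\mu\to\mu$ ($t\uparrow1$), while $\|M_{z}(S)\|$ stays bounded on $\overline{\mathbb D}$ (since $\|S\|\le\rho$); by the Schwarz inequality for the positive form $\langle M_{z}(S)\cdot,\cdot\rangle$ this forces $M_{z}(S)x\to0$ weakly, and passing to the limit in the explicit expression of $\langle M_{z}(S)x,y\rangle$ yields $\langle x-\mu S^{*}x,\,y\rangle=0$ for every $y$.

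Now fix $\lambda\in\Gamma_{p}(T_{1})$ and a unit vector $x\in\ker(T_{1}-\lambda I)$. Applying the lemma to $T_{1}$ gives $(I-\bar zT_{1})^{-1}x=(1-\bar z\lambda)^{-1}x$ and $(I-zT_{1}^{*})^{-1}x=(1-z\bar\lambda)^{-1}x$, hence $K_{z}^{\rho}(T_{1})x=k(z)\,x$ where $k(z):=\tfrac1{1-\bar z\lambda}+\tfrac1{1-z\bar\lambda}+\rho-2>0$ and $k\bigl((1-t)\lambda\bigr)=\tfrac2t+\rho-2\to+\infty$ as $t\downarrow0$. Inserting this eigenrelation, together with $\langle K_{z}^{\rho}(T_{1})x,x\rangle=k(z)$, into $K_{z}^{\rho}(T_{1})\le c^{2}K_{z}^{\rho}(T_{0})$ and using the Schwarz inequality for $K_{z}^{\rho}(T_{1})\ge0$ gives, for all $y\in H$ and $z\in\mathbb D$,
\[
k(z)^{2}|\langle x,y\rangle|^{2}=|\langle K_{z}^{\rho}(T_{1})x,y\rangle|^{2}\le k(z)\langle K_{z}^{\rho}(T_{1})y,y\rangle\le c^{2}k(z)\langle K_{z}^{\rho}(T_{0})y,y\rangle ,
\]
i.e. $K_{z}^{\rho}(T_{0})\ge c^{-2}k(z)\,|x\rangle\langle x|$ on $H$. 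Putting $y=(I-\bar zT_{0})w$ in this quadratic-form inequality and invoking the factorization of $K_{z}^{\rho}(T_{0})$ turns it into
\[
\langle M_{z}(T_{0})w,w\rangle\ \ge\ \frac{k(z)}{c^{2}}\,\bigl|\langle w,\,(I-zT_{0}^{*})x\rangle\bigr|^{2},\qquad w\in H,\ z\in\mathbb D .
\]

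To finish I would let $z=(1-t)\lambda$ with $t\downarrow0$ and take the fixed test vector $w:=x-\lambda T_{0}^{*}x$. The left-hand side converges to $\langle M_{\lambda}(T_{0})w,w\rangle<\infty$; the inner product on the right converges to $\langle w,(I-\lambda T_{0}^{*})x\rangle=\|w\|^{2}$; and $k(z)\to+\infty$. Hence $w\ne0$ is impossible, so $T_{0}^{*}x=\bar\lambda x$, and the rigidity lemma applied to the $\rho$-contraction $T_{0}^{*}$ gives $T_{0}x=\lambda x$. Thus $\ker(T_{1}-\lambda I)\subseteq\ker(T_{0}-\lambda I)$ for every $\lambda\in\Gamma_{p}(T_{1})$, and in particular $\Gamma_{p}(T_{1})\subseteq\Gamma_{p}(T_{0})$. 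The delicate step is the passage from the domination to the operator inequality $K_{z}^{\rho}(T_{0})\ge c^{-2}k(z)|x\rangle\langle x|$ and then, by conjugation with $(I-\bar zT_{0})$, to the $M_{z}(T_{0})$-form: the mere scalar estimate $\langle K_{z}^{\rho}(T_{0})x,x\rangle\ge c^{-2}k(z)$ does not suffice to place $x$ in $\ker(T_{0}-\lambda I)$, and one genuinely needs the rank-one direction $(I-zT_{0}^{*})x\to(I-\lambda T_{0}^{*})x$ to be forced to vanish against the blow-up of $k(z)$.
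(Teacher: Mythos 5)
Your proof is correct and follows essentially the same route as the paper's: the eigenvector relation makes $K_{z}^{\rho}(T_1)x$ a scalar multiple of $x$, the Cauchy--Schwarz inequality for the positive kernel transfers the Harnack domination to $T_0$, conjugation by $(I-\overline{z}T_0)$ reduces matters to the uniformly bounded middle factor $M_z(T_0)$ (the role played by Lemma \ref{lem:2.4} in the paper), and the blow-up of the scalar factor along $z=(1-t)\lambda$ forces the desired vanishing. The only differences are cosmetic: you prove the reducing property of unimodular eigenvectors from the positivity of $M_z$ where the paper cites \cite[Proposition 3]{CaZe}, and you test against the single vector $(I-\lambda T_0^{*})x$ followed by a rigidity step applied to $T_0^{*}$, whereas the paper shows directly that $x$ is orthogonal to the range of $I-\lambda T_0^{*}$.
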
 
For the proof of this theorem we need the following lemma.
\begin{lemma}\label{lem:2.4} Let $T \in C_{\rho}(H)$. Then 
$$ \Vert (I- \overline{\lambda}T) K_{z}^{\rho} (T)(I- \lambda T^*) \Vert \leq \rho(1+2\vert 1-\rho\vert+\vert\rho-2\vert\rho)(1+\rho\frac{\vert z-\lambda \vert}{1-\vert z \vert})^2, $$
for all $z\in \mathbb{D}$  and $\lambda\in \overline{\mathbb{D}}$.
\end{lemma}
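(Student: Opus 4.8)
The quantity to bound is the norm of $(I-\overline\lambda T)K_z^\rho(T)(I-\lambda T^*)$. The natural starting point is the factored form of the kernel already used in the proof of Theorem \ref{spectral+torus}, namely
\[
K_z^\rho(T)=(I-zT^*)^{-1}\bigl[\rho I+2(1-\rho)\,\mathrm{Re}(\overline z T)+(\rho-2)|z|^2 T^*T\bigr](I-\overline z T)^{-1}.
\]
Write $M_z(T):=\rho I+2(1-\rho)\,\mathrm{Re}(\overline z T)+(\rho-2)|z|^2T^*T$ for the middle factor. Since $\|T\|\le\|T^1\|\le\rho$ is false in general (we only have $\|T^n\|\le\rho$ for $n\ge1$, so in particular $\|T\|\le\rho$), using $\|T\|\le\rho$ and $|z|<1$ gives $\|M_z(T)\|\le\rho+2|1-\rho|\rho+|\rho-2|\rho^2$; factoring out one $\rho$ yields the scalar constant $\rho(1+2|1-\rho|+|\rho-2|\rho)$ appearing on the right-hand side. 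So the whole problem reduces to controlling the two "conjugating" factors
\[
(I-\overline\lambda T)(I-zT^*)^{-1}\quad\text{and}\quad(I-\overline z T)^{-1}(I-\lambda T^*),
\]
which are adjoints of each other; it suffices to bound one of them by $1+\rho\frac{|z-\lambda|}{1-|z|}$.

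First I would estimate $A:=(I-\overline z T)^{-1}(I-\lambda T^*)$. Rather than mixing $T$ and $T^*$, I'd rewrite $I-\lambda T^* $ in a way that lets the resolvent $(I-\overline z T)^{-1}$ act cleanly — but since the two factors involve $T$ and $T^*$ there is no algebraic cancellation, so instead I bound $B:=(I-\overline\lambda T)(I-zT^*)^{-1}$, whose two factors involve $T$ and $T^*$ as well; note $A^*=(I-\overline\lambda T)(I-\overline z T^*)^{-1}$ (careful with which variable is conjugated). The right object to manipulate is really $C:=(I-\overline z T)^{-1}(I-\overline\lambda T)$, both factors in $T$: then
\[
C=(I-\overline z T)^{-1}\bigl[(I-\overline z T)+(\overline z-\overline\lambda)T\bigr]=I+(\overline z-\overline\lambda)(I-\overline z T)^{-1}T,
\]
so $\|C\|\le 1+|z-\lambda|\,\|(I-\overline z T)^{-1}T\|$. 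The key sub-estimate is therefore $\|(I-\overline z T)^{-1}T\|\le \frac{\rho}{1-|z|}$, and then $\|C\|\le 1+\rho\frac{|z-\lambda|}{1-|z|}$.

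The main obstacle is precisely this sub-estimate: $\|(I-\overline z T)^{-1}T\|\le\rho/(1-|z|)$. A naive geometric-series bound gives $\|(I-\overline z T)^{-1}\|\le\sum_{n\ge0}|z|^n\|T^n\|\le 1+\rho\sum_{n\ge1}|z|^n=1+\rho\frac{|z|}{1-|z|}$, which is the wrong shape when multiplied by $\|T\|\le\rho$. The clean route is to expand with $T$ already multiplied in: $(I-\overline z T)^{-1}T=\sum_{n\ge0}\overline z^{\,n}T^{n+1}=\sum_{m\ge1}\overline z^{\,m-1}T^m$, and now every term carries $\|T^m\|\le\rho$, giving $\|(I-\overline z T)^{-1}T\|\le\rho\sum_{m\ge1}|z|^{m-1}=\frac{\rho}{1-|z|}$. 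This uses the power-boundedness $\|T^m\|\le\rho$ of $\rho$-contractions from \eqref{eq:1-pb}, which is the heart of the argument; the spectral condition $\sigma(T)\subseteq\overline{\mathbb D}$ guarantees $(I-\overline z T)^{-1}$ exists for $z\in\mathbb D$ so all series converge. Finally I would assemble the pieces: since $K_z^\rho(T)$ is self-adjoint,
\[
(I-\overline\lambda T)K_z^\rho(T)(I-\lambda T^*)=C\,M_z(T)\,C^*,
\]
hence its norm is at most $\|C\|^2\|M_z(T)\|\le \rho(1+2|1-\rho|+|\rho-2|\rho)\bigl(1+\rho\frac{|z-\lambda|}{1-|z|}\bigr)^2$, which is the claimed bound, valid for all $z\in\mathbb D$ and $\lambda\in\overline{\mathbb D}$ (the argument nowhere used $|\lambda|<1$).
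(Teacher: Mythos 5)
Your proof is essentially the paper's own argument: factor the kernel, bound the middle term by $\rho(1+2\vert 1-\rho\vert+\vert\rho-2\vert\rho)$ using $\Vert T\Vert\le\rho$, and bound each outer factor by $1+\rho\frac{\vert z-\lambda\vert}{1-\vert z\vert}$ via the expansion $I+(z-\lambda)\sum_{n\ge0}z^nT^{*\,n+1}$ together with power-boundedness $\Vert T^m\Vert\le\rho$ — exactly the paper's Neumann-series trick. One bookkeeping slip: your operator $C=(I-\overline z T)^{-1}(I-\overline\lambda T)$ is the conjugating factor for the factorization $K_z^\rho(T)=(I-\overline z T)^{-1}\bigl[\rho I+2(1-\rho)\mathrm{Re}(\overline z T)+(\rho-2)\vert z\vert^2\,TT^*\bigr](I-zT^*)^{-1}$ (middle term $TT^*$, which is the one the paper uses in this lemma), not for the one with middle term $T^*T$ that you wrote down; with your $M_z(T)$ the identity $(I-\overline\lambda T)K_z^\rho(T)(I-\lambda T^*)=C\,M_z(T)\,C^*$ is false as stated. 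Since $\Vert TT^*\Vert=\Vert T^*T\Vert\le\rho^2$, the constant and hence the final inequality are unaffected once the matching factorization is substituted.
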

\begin{proof}Let $z\in \mathbb{D}$  and $\lambda\in \overline{\mathbb{D}}$, we have
\begin{align*}
(I-zT^{*})^{-1} (I-\lambda T^*)& =(I-zT^{*})^{-1}[I-zT^{*}+ (z-\lambda) T^{*}]\\
& = I+ (z-\lambda)\sum^{+\infty}_{n=0} z^{n}T^{*n+1}.	
\end{align*}
Then by \eqref{eq:1-pb},
$$\Vert(I-zT^{*})^{-1} (I-\lambda T^*)\Vert \leq  1+\rho\frac{\vert z-\lambda \vert}{1-\vert z \vert}.$$
 Taking into account this inequality and the fact that $$K_{z}^{\rho} (T_1) =(I-\overline{z}T_{1})^{-1}[ \rho I+2(1-\rho) Re (\overline{z}T_1)+(\rho-2) \left|z\right|^{2}T_{1}T^{*}_{1}](I-zT^{*}_{1})^{-1},$$ we obtain the desired inequality.
\end{proof}
\begin{proof}[Proof of Theorem \ref{thm:spp}] Let  $\lambda \in \Gamma_p(T_1)$. Then  the operator $T_1 \in C_{\rho}(H)$ on $Ker(T_1 -\lambda I)\oplus Ker(T_1 -\lambda I)^{\bot}$ takes the form
$$T_1= \begin{pmatrix}
 \lambda & C \\
0 & \tilde{T_1}
\end{pmatrix}.$$
Since $\vert\lambda\vert=1$, by using Proposition \cite[Proposition 3.]{CaZe} we can see 
that $C=0$. Thus, we have
$$K_{z}^{\rho}(T_1)= \begin{pmatrix}
 \frac{\rho +2(1-\rho) Re (\overline{\lambda}z)+(\rho-2) \left|\lambda\right|^{2} \left|z\right|^{2}}{\vert 1- \overline{\lambda}z\vert^{2}}I& 0 
\\
0 & K_{z}^{\rho}(\tilde{T_1})
\end{pmatrix},$$
Now, if  $T_0  \in C_{\rho}(H)$ be such that $T_1 {\stackrel{H}{\prec}} T_0$, then there exists  $c\geq 1$ such that 
 \begin{equation*}
 K_{z}^{\rho} (T_1) \leq c^{2} K_{z}^{\rho} (T_0), \quad \text{ for all } z\in \mathbb{D},
 \end{equation*} 
 Let $x\in Ker(T_1 -\lambda I)$ and $y\in  Im(T_{0}^{*} -\lambda I)$. The  Cauchy-Schwarz inequality yields
 $$ \left|\left\langle K_{z}^{\rho} (T_1)x ,y\right\rangle\right|^{2}  \leq  c^2 \left\langle K_{z}^{\rho} (T_1)x ,x\right\rangle \left\langle K_{z}^{\rho} (T_0)y ,y\right\rangle.$$
 We derive
 $$ \frac{\rho I+2(1-\rho) Re (\overline{\lambda}z)+(\rho-2) \left|\lambda\right|^{2} \left|z\right|^{2}}{\vert 1- \overline{\lambda}z\vert^{2}} \left|\left\langle x ,y\right\rangle\right|^{2}  \leq  c^2 \left\langle K_{z}^{\rho} (T_0)y ,y\right\rangle
 \Vert x\Vert ^2.$$
 Since $y\in  Im(T_{0}^{*} -\lambda I)$, there exits $u\in H$ such that $y=(I -\lambda T_{0}^{*} ) u$. By Lemma \ref{lem:2.4}, we have
 \begin{align*}
\left\langle K_{z}^{\rho} (T_0)y ,y\right\rangle & =\left\langle ( I-\overline{\lambda} T_0) K_{z}^{\rho} (T_0)(I -\lambda T_{0}^{*} ) u ,(I -\lambda T_{0}^{*} ) u\right\rangle \\
& \leq \rho(1+2\vert 1-\rho\vert+\vert\rho-2\vert\rho)(1+\rho\frac{\vert z-\lambda \vert}{1-\vert z \vert})^2 \Vert u\Vert ^2.	
\end{align*}
Let $z=r\lambda $, with $0<r<1$. Then 
$$
\frac{\vert\rho +2(1-\rho) r+(\rho-2) r^{2}\vert }{(1-r)^{2}} \left|\left\langle x ,y\right\rangle\right|^{2}   \leq c^2\rho(1+2\vert 1-\rho\vert+\vert\rho-2\vert\rho)(1+\rho)^2 \Vert u\Vert ^2 \Vert x\Vert ^2.$$
This implies
$$
 \vert\rho +2(1-\rho) r+(\rho-2) r^{2}\vert \vert\left\langle x ,y\right\rangle\vert^{2}   \leq c^2 (1-r)^2\rho(1+2\vert 1-\rho\vert+\vert\rho-2\vert\rho)(1+\rho)^2 \Vert u\Vert ^2 \Vert x\Vert ^2 .$$
By letting $r$ to $1$, it follows that $\left\langle x ,y\right\rangle= 0$, and hence $x\in Im(T_{0}^{*} -\lambda I)^{\bot}= Ker(T_{0} -\lambda I)$. So, $\Gamma_p(T_1) \subseteq \Gamma_p(T_0)$ and $Ker(T_1 -\lambda I)\subseteq Ker(T_0 -\lambda I)$.
\end{proof}
\begin{remark} By Theorem \ref{thm:spp}, if  $I_H {\stackrel{H}{\prec}} T$ on $C_{\rho}(H)$, ($\rho \geq 1$) then $T=I_H$. This means that $I_H$ is a maximal element  for the Harnack domination on $C_{\rho}(H)$ and its Harnack part is trivial,  for all $\rho \geq 1$.
\end{remark}
From Theorem \ref{thm:spp}, we also obtain the following result
\begin{corollary}\label{harnackequspp}
If  $T_1$ and $T_0$ are Harnack equivalent in $C_{\rho}(H)$ then $\Gamma_p (T_1)= \Gamma_p (T_0)$ and $Ker(T_1 -\lambda I)= Ker(T_0 -\lambda I)$ for all $\lambda \in  \Gamma_p (T_0)$.
\end{corollary}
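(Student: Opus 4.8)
The plan is to deduce this immediately from Theorem \ref{thm:spp} by exploiting the symmetry built into the notion of Harnack equivalence. By definition, $T_1 {\stackrel{H}{\sim}} T_0$ means that both $T_1 {\stackrel{H}{\prec}} T_0$ and $T_0 {\stackrel{H}{\prec}} T_1$ hold (with possibly different constants $c \geq 1$), so the strategy is simply to invoke Theorem \ref{thm:spp} twice, once in each direction.

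First I would apply Theorem \ref{thm:spp} to the domination $T_1 {\stackrel{H}{\prec}} T_0$, which yields $\Gamma_p(T_1) \subseteq \Gamma_p(T_0)$ together with $\mathrm{Ker}(T_1 - \lambda I) \subseteq \mathrm{Ker}(T_0 - \lambda I)$ for every $\lambda \in \Gamma_p(T_1)$. Then I would apply it again to $T_0 {\stackrel{H}{\prec}} T_1$, obtaining the reverse inclusions $\Gamma_p(T_0) \subseteq \Gamma_p(T_1)$ and $\mathrm{Ker}(T_0 - \lambda I) \subseteq \mathrm{Ker}(T_1 - \lambda I)$ for every $\lambda \in \Gamma_p(T_0)$. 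Combining the two spectral inclusions gives $\Gamma_p(T_1) = \Gamma_p(T_0)$; and for any $\lambda$ in this common set, the two kernel inclusions combine to give $\mathrm{Ker}(T_1 - \lambda I) = \mathrm{Ker}(T_0 - \lambda I)$, which is exactly the claim.

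There is essentially no obstacle here: the content is entirely contained in Theorem \ref{thm:spp}, and the corollary is just the symmetrization of that one-sided statement. The only mild care needed is to note that the constants $c$ coming from the two dominations need not coincide, but since Theorem \ref{thm:spp} is valid for any admissible constant this causes no difficulty; the argument is purely formal once that theorem is in hand.

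\begin{proof}[Proof of Corollary \ref{harnackequspp}]
Since $T_1$ and $T_0$ are Harnack equivalent, we have both $T_1 {\stackrel{H}{\prec}} T_0$ and $T_0 {\stackrel{H}{\prec}} T_1$. Applying Theorem \ref{thm:spp} to $T_1 {\stackrel{H}{\prec}} T_0$ gives $\Gamma_p(T_1) \subseteq \Gamma_p(T_0)$ and $\mathrm{Ker}(T_1 - \lambda I) \subseteq \mathrm{Ker}(T_0 - \lambda I)$ for all $\lambda \in \Gamma_p(T_1)$. Applying Theorem \ref{thm:spp} to $T_0 {\stackrel{H}{\prec}} T_1$ gives $\Gamma_p(T_0) \subseteq \Gamma_p(T_1)$ and $\mathrm{Ker}(T_0 - \lambda I) \subseteq \mathrm{Ker}(T_1 - \lambda I)$ for all $\lambda \in \Gamma_p(T_0)$. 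Hence $\Gamma_p(T_1) = \Gamma_p(T_0)$, and for every $\lambda$ in this common set the two kernel inclusions yield $\mathrm{Ker}(T_1 - \lambda I) = \mathrm{Ker}(T_0 - \lambda I)$.
\end{proof}
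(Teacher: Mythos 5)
Your proof is correct and is exactly the argument the paper intends: the corollary is stated as an immediate consequence of Theorem \ref{thm:spp} (with no written proof), obtained by applying that theorem to both dominations $T_1 {\stackrel{H}{\prec}} T_0$ and $T_0 {\stackrel{H}{\prec}} T_1$ and intersecting the resulting inclusions. Nothing further is needed.
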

\begin{remark}\label{per_spect} After the authors have obtained
Theorem \ref{spectral+torus}, they have learned that C. Badea, D. Timotin and L. Suciu \cite{BaTiSu} have proved using an other method that, in the case of contractions ($\rho=1$), the domination suffices for the equality of the  point spectrum in the torus. But in the case of $\rho>1$ the inclusion in Theorem \ref{thm:spp} may be strict, for instance, we have 
\begin{itemize}
\item For  $\rho>1$, we have $0_H\stackrel{H}{\underset{c}{\prec}}I$ in $C_{\rho}(H)$  with  
$c=\sqrt{\dfrac{\rho}{\rho-1}}$.
\item For $1<\rho$, the operator $T$ defined on $\mathbb{C}^2$ by
 $T =
 \begin{pmatrix}
0 &\rho \\
0 & 0
\end{pmatrix}$ satisfies $T\stackrel{H}{\underset{c}{\prec}}I$ in $C_{\rho}(H)$ with $c=\sqrt{\dfrac{2\rho}{\rho-1}}$.
\end{itemize}
\end{remark}
\begin{corollary}\label{coro:he}Let $T_0 , T_1 \in C_{\rho}(H)$ ($\rho \geq 1$) such that $\Gamma(T_0)=\Gamma_p(T_0)$. Then $T_0$ and  $T_1$  are Harnack equivalent in $C_{\rho}(H)$ if and only if $T_0 =U \oplus \tilde{T_0}$ and $T_1 =U \oplus \tilde{T_1}$ on $H =E \oplus E^{\bot}$, where $E=\oplus_{\lambda \in  \Gamma_p (T_0)}Ker(T_0 -\lambda I)=\oplus_{\lambda \in  \Gamma_p (T_1)}Ker(T_1 -\lambda I )$, $U$ is an unitary diagonal operator on $E$ and $\tilde{T_0}$ and  $\tilde{T_1}$  are Harnack equivalent in $C_{\rho}(E^{\bot})$.
\end{corollary}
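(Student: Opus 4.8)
The plan is to prove the two implications separately, in both cases using criterion (iii) of Theorem~\ref{harnack} (Harnack domination $\Longleftrightarrow$ a two‑sided $\rho$‑kernel inequality) together with the fact that the $\rho$‑kernel is additive over orthogonal direct sums; the only genuinely ``operator‑theoretic'' ingredient will be the structure of the unimodular eigenspaces of a $\rho$‑contraction already exploited in the proof of Theorem~\ref{thm:spp}.

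\emph{Sufficiency.} Suppose $T_0=U\oplus\tilde T_0$ and $T_1=U\oplus\tilde T_1$ on $H=E\oplus E^{\bot}$ with $U$ unitary and $\tilde T_0\stackrel{H}{\sim}\tilde T_1$ in $C_\rho(E^{\bot})$. Since $T_i$ is block‑diagonal, $K_{z}^{\rho}(T_i)=K_{z}^{\rho}(U)\oplus K_{z}^{\rho}(\tilde T_i)$ for every $z\in\mathbb D$, and $K_{z}^{\rho}(U)\geq 0$ because $U$ is unitary (so $U\in C_\rho(E)$). By Theorem~\ref{harnack}(iii) there is $c\geq 1$ with $K_{z}^{\rho}(\tilde T_1)\leq c^{2}K_{z}^{\rho}(\tilde T_0)$ and $K_{z}^{\rho}(\tilde T_0)\leq c^{2}K_{z}^{\rho}(\tilde T_1)$ for all $z$; adding the positive operator $(c^{2}-1)K_{z}^{\rho}(U)$ to the $E$‑block and using $c\geq1$ gives $K_{z}^{\rho}(T_1)\leq c^{2}\bigl(K_{z}^{\rho}(U)\oplus K_{z}^{\rho}(\tilde T_0)\bigr)=c^{2}K_{z}^{\rho}(T_0)$, and symmetrically the reverse inequality. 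Hence $T_0\stackrel{H}{\sim}T_1$ in $C_\rho(H)$. (This direction does not actually use $\Gamma(T_0)=\Gamma_p(T_0)$.)

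\emph{Necessity.} Assume $T_0\stackrel{H}{\sim}T_1$. By Corollary~\ref{harnackequspp}, $\Gamma_p(T_1)=\Gamma_p(T_0)$ and $H_\lambda:=Ker(T_0-\lambda I)=Ker(T_1-\lambda I)$ for each $\lambda\in\Gamma_p(T_0)$, so the two descriptions of $E$ coincide. The crucial step is that, since $|\lambda|=1$, the block decomposition of $T_i$ along $H_\lambda\oplus H_\lambda^{\bot}$ has zero off‑diagonal entry — this is the argument via \cite[Proposition 3]{CaZe} already used in the proof of Theorem~\ref{thm:spp} — so $H_\lambda$ reduces $T_i$, $T_i|_{H_\lambda}=\lambda I$ and $T_i^{*}|_{H_\lambda}=\overline\lambda I$. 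Comparing $\langle T_i x,y\rangle$ for $x\in H_\lambda$, $y\in H_\mu$ with $\lambda\neq\mu$ in $\Gamma_p(T_i)$ then forces $H_\lambda\perp H_\mu$, so $E=\bigoplus_{\lambda\in\Gamma_p(T_0)}H_\lambda$ is a closed subspace reducing both $T_0$ and $T_1$, on which each acts as the \emph{same} unitary diagonal operator $U=\bigoplus_\lambda\lambda I_{H_\lambda}$. Putting $\tilde T_i=T_i|_{E^{\bot}}$, we get $T_i=U\oplus\tilde T_i$; moreover $E^{\bot}$ reduces $T_i$, whence $K_{z}^{\rho}(T_i)|_{E^{\bot}}=K_{z}^{\rho}(\tilde T_i)\geq0$ and $\sigma(\tilde T_i)\subseteq\overline{\mathbb D}$, so $\tilde T_i\in C_\rho(E^{\bot})$. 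Finally, restricting the inequalities $K_{z}^{\rho}(T_1)\leq c^{2}K_{z}^{\rho}(T_0)$ and $K_{z}^{\rho}(T_0)\leq c^{2}K_{z}^{\rho}(T_1)$ to vectors of $E^{\bot}$ gives $K_{z}^{\rho}(\tilde T_1)\leq c^{2}K_{z}^{\rho}(\tilde T_0)$ and its reverse, i.e. $\tilde T_0\stackrel{H}{\sim}\tilde T_1$ in $C_\rho(E^{\bot})$ by Theorem~\ref{harnack}(iii). (The hypothesis $\Gamma(T_0)=\Gamma_p(T_0)$, combined with $\Gamma(T_1)=\Gamma(T_0)$ from Corollary~\ref{harnackequ}, is what makes this splitting meaningful, namely that the whole torus spectrum of $T_0$ and $T_1$ is absorbed into the unitary summand.)

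\emph{Main obstacle.} The $\rho$‑kernel inequalities are routine once stated; the only step needing the theory of $C_\rho(H)$ is the reduction property of unimodular eigenspaces, i.e. that a unimodular eigenvalue splits off a scalar block of the operator. Having established that, the identification of $E$ as a common reducing subspace on which $T_0$ and $T_1$ agree, and the transfer of the Harnack inequalities to $E^{\bot}$, are bookkeeping.
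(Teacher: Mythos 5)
Your proof is correct, and it is actually more complete than the one printed in the paper, which only establishes the mutual orthogonality of the unimodular eigenspaces and then cites \cite[Corollary 4]{CaZe} for the reduction, leaving the sufficiency direction and the transfer of the kernel inequalities to $E^{\bot}$ implicit. The one place where you genuinely diverge is the orthogonality step: the paper applies the Cauchy--Schwarz inequality to $\langle K_{z}^{\rho}(T_0)x,y\rangle$ for $x\in Ker(T_0-\lambda I)$, $y\in Ker(T_0-\mu I)$, computes both sides explicitly using the eigenvector relations, and lets $z\to\lambda$ to force $\langle x,y\rangle=0$; you instead first invoke the vanishing of the off-diagonal block (via \cite[Proposition 3]{CaZe}, exactly as in the proof of Theorem~\ref{thm:spp}) to get that each $H_\lambda$ reduces $T_i$ with $T_i^{*}|_{H_\lambda}=\overline{\lambda}I$, and then deduce $(\lambda-\mu)\langle x,y\rangle=0$ from the two evaluations of $\langle T_ix,y\rangle$. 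Your route is more elementary once the reducing property is in hand and avoids the limiting argument; the paper's route gets orthogonality directly from the kernel domination without first appealing to the reduction. Both rest on the same external input from \cite{CaZe}. Your remark that $\Gamma(T_0)=\Gamma_p(T_0)$ is not needed for the decomposition itself is accurate; that hypothesis is what makes the corollary useful downstream (e.g.\ in Corollary~\ref{coro:hecn}), since it guarantees the unitary summand absorbs all of the peripheral spectrum. The remaining steps you supply (positivity and block-diagonality of $K_{z}^{\rho}(U)\oplus K_{z}^{\rho}(\tilde T_i)$, the use of $c\geq1$ to absorb the $E$-block, and the restriction of the two-sided inequality to $E^{\bot}$) are all sound.
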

\begin{proof}First we prove that if $\lambda, \mu \in  \Gamma_p (T_0)$, then $Ker(T_0 -\lambda I)\bot Ker(T_0 -\mu I)$ for $\lambda\neq \mu$.  Let $x\in Ker(T_1 -\lambda I)$ and $y\in  Ker(T_{0} -\mu I)$. Then
\begin{align*}
\left\langle K_{z}^{\rho} (T_0)x ,y\right\rangle & =\left\langle ( (I-\overline{z}T)^{-1}+(I-zT^{*})^{-1}+ (2-\rho)I) x ,y\right\rangle
 \\
& =\frac{1}{1- \overline{z}\lambda}\left\langle x ,y\right\rangle + \frac{1}{1- z\overline{\mu}}\left\langle x ,y\right\rangle +(2-\rho ) \left\langle x ,y\right\rangle.	
\end{align*}
By Cauchy-Schwarz inequality 
$$
\vert\left\langle K_{z}^{\rho} (T_0)x ,y\right\rangle\vert^2  \leq
\left\langle K_{z}^{\rho} (T_0)x ,x\right\rangle \left\langle K_{z}^{\rho} (T_0)y ,y\right\rangle.
$$
Thus
$$
\vert\frac{1}{1- \overline{z}\lambda}+\frac{1}{1- z\overline{\mu}}+(2-\rho)\vert^2
\vert\left\langle x ,y\right\rangle\vert^2\leq $$ $$\frac{(\rho +2(1-\rho) Re (\overline{\lambda}z)+(\rho-2) \left|z\right|^{2})(\rho +2(1-\rho) Re (\overline{\mu}z)+(\rho-2) \left|z\right|^{2})}{\vert 1- \overline{z}\lambda\vert^{2}\vert 1- \overline{\mu}z\vert^{2}}\Vert x\Vert^2 \Vert y\Vert^2.
$$
So
$$
\vert 1+\frac{1- \overline{z}\lambda}{1- z\overline{\mu}}+(2-\rho)(1- \overline{z}\lambda) \left|z\right|^{2}\vert^2
\vert\left\langle x ,y\right\rangle\vert^2\leq $$ $$\frac{(\rho +2(1-\rho) Re (\overline{\lambda}z)+(\rho-2) \left|z\right|^{2})(\rho +2(1-\rho) Re (\overline{\mu}z)+(\rho-2) \left|z\right|^{2})}{\vert 1- \overline{\mu}z\vert^{2}}\Vert x\Vert^2 \Vert y\Vert^2.
$$
By taking $z$ to $\lambda$, we get $\left\langle x ,y\right\rangle =0$. By \cite[Corollary  4.]{CaZe} the subspace $E$ reduces $T_0$ and $T_1$. 
\end{proof}
\begin{example}Recall  that an operator $T\in B(H)$  is called to be quasi-compact
operator (or quasi-strongly completely continuous in the terminology of \cite{YoKa})  if there exists a compact operator $K$ and an integer $m$ such that 
$\left\Vert T^{m}-K\right\Vert <1$. Since  every operator  $T \in C_{\rho}(H)$ ($\rho \geq 1$) is power-bounded, by \cite[Theorem 4]{YoKa}; if $T \in C_{\rho}(H)$ ($\rho \geq 1$) is a quasi-compact operator then $\Gamma(T)=\Gamma_p(T)$ and contains a finite  number of eigenvalues and each of them is of finite multiplicity. Now if we assume that $T ,S $    are two quasi-compact operators which are Harnack equivalent in $C_{\rho}(H)$, ($\rho \geq 1$), then $S =U \oplus \tilde{S}$ where $U$ is an unitary diagonal operator on $E=\oplus_{i=1}^{k}Ker(T -\lambda_i I )$ and $\tilde{S}$ is  Harnack equivalent to $0$ in $C_{\rho}(E^{\bot})$.
\end{example}
\begin{corollary}\label{coro:hecn}Let $T \in C_{\rho}(H)$ ($\rho\geq 1$) be a compact normal operator  with $w_{\rho}(T)=1$. If the operator $S \in C_{\rho}(H)$ is Harnack equivalent to $T$, then for all  $\lambda \in  \Gamma_p (T)$, $S_{|E}=T_{|E}$ where $E=\oplus_{\lambda \in  \Gamma_p (T)}Ker(T -\lambda I)$, $E$ is a reducing subspace for $S$ and $S_{E^{\bot}}$ is Harnack equivalent to $0$, i.e. $w_{\rho} (S_{E^{\bot}})<1$.
\end{corollary}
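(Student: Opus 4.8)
The plan is to derive Corollary~\ref{coro:hecn} as a special case of Corollary~\ref{coro:he} together with the structure theory of compact normal operators. First I would observe that a compact normal operator $T$ has a spectrum consisting of $0$ together with a (possibly finite, at most countable) sequence of nonzero eigenvalues accumulating only at $0$; in particular every point of $\sigma(T)$ on the torus $\mathbb{T}$ is an isolated eigenvalue, so $\Gamma(T)=\Gamma_p(T)$. This is precisely the hypothesis needed to invoke Corollary~\ref{coro:he}. Moreover, since $w_\rho(T)=1$ and the spectral radius of a normal operator equals $w_\rho$ for it is... actually I would argue more carefully: for a compact normal operator one has $\|T^n\|^{1/n}\to r(T)$, and the condition $w_\rho(T)=1$ combined with $\sigma(T)\subseteq\overline{\mathbb{D}}$ forces $r(T)=1$ unless $\Gamma(T)=\emptyset$; but if $\Gamma(T)=\emptyset$ then $T$ would be a strict $\rho$-contraction by \cite[Theorem 4.4]{CaSu}, hence Harnack equivalent to $0$, and the statement holds trivially with $E=\{0\}$. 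So we may assume $\Gamma(T)=\Gamma_p(T)\neq\emptyset$.

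Next I would apply Corollary~\ref{coro:he} with $T_0=T$ and $T_1=S$: since $T$ and $S$ are Harnack equivalent and $\Gamma(T)=\Gamma_p(T)$, we get decompositions $T=U\oplus\tilde T$ and $S=U\oplus\tilde S$ on $H=E\oplus E^\bot$, where $E=\bigoplus_{\lambda\in\Gamma_p(T)}\mathrm{Ker}(T-\lambda I)=\bigoplus_{\lambda\in\Gamma_p(S)}\mathrm{Ker}(S-\lambda I)$, the operator $U$ is a unitary diagonal operator on $E$, and $\tilde T,\tilde S$ are Harnack equivalent in $C_\rho(E^\bot)$. In particular $E$ reduces both $T$ and $S$, and $S_{|E}=U=T_{|E}$, which is the first assertion. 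It remains to identify $\tilde T$ with the restriction $T_{|E^\bot}$ and to show $\tilde S=S_{|E^\bot}$ is Harnack equivalent to $0$, i.e. $w_\rho(\tilde S)<1$ by \cite[Theorem 4.4]{CaSu}. By the spectral theorem $T_{|E^\bot}=\tilde T$ is again compact normal with $\sigma(\tilde T)\subseteq\overline{\mathbb{D}}$ and, crucially, $\sigma(\tilde T)\cap\mathbb{T}=\emptyset$, because $E$ was built precisely from the eigenspaces lying on the torus; hence $r(\tilde T)<1$, and since $\tilde T$ is normal, $w_\rho(\tilde T)=r(\tilde T)<1$ for normal operators (here using that $w_\rho(N)=r(N)$ when $N$ is normal and $\rho\ge 1$, which follows since a normal operator is unitarily equivalent to multiplication and $w_\rho$ of a scalar $\lambda$ is $|\lambda|$ for $\rho\ge 1$... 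I would cite or prove the needed inequality $w_\rho(N)\le r(N)$ for normal $N$). Thus $\tilde T$ is a strict $\rho$-contraction, hence Harnack equivalent to $0_{E^\bot}$, and since Harnack equivalence is transitive and $\tilde S\overset{H}{\sim}\tilde T$, also $\tilde S\overset{H}{\sim}0_{E^\bot}$, so $w_\rho(\tilde S)<1$.

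The main obstacle I anticipate is the clean justification that $w_\rho(N)=r(N)$ (or at least $w_\rho(N)<1$ when $r(N)<1$) for a compact normal operator $N$; one must be careful because $w_\rho$ is not in general equal to the spectral radius. For a normal operator this should follow from the spectral theorem: $N=\int_{\sigma(N)}\lambda\,dE(\lambda)$, and a direct estimate on the $\rho$-kernel $K_z^\rho(N)=\int\bigl((1-\bar z\lambda)^{-1}+(1-z\bar\lambda)^{-1}+\rho-2\bigr)dE(\lambda)$ shows $K_z^\rho((1/\gamma)N)\ge 0$ for all $z\in\mathbb{D}$ as soon as $\gamma>r(N)$, because pointwise in $\lambda$ with $|\lambda|\le r(N)/\gamma<1$ the scalar $\mathrm{Re}\frac{2}{1-\bar z\lambda}+\rho-2$ is bounded below by a positive constant uniformly; hence $w_\rho(N)\le r(N)$, and the reverse inequality is automatic from \eqref{eq:1-sp-rad}. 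A secondary point to handle with care is the degenerate case $\Gamma(T)=\emptyset$ and the possibility that $E$ or $E^\bot$ is trivial, which I would dispatch with a brief remark at the start of the proof.
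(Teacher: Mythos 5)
Your proof is correct and follows essentially the same route as the paper's: invoke Corollary \ref{coro:he} to get the decompositions $T=U\oplus\tilde{T}$ and $S=U\oplus\tilde{S}$ on $E\oplus E^{\bot}$, then use compactness and normality to conclude $w_{\rho}(\tilde{T})=r(\tilde{T})<1$, so that $\tilde{T}$, and hence $\tilde{S}$ by transitivity, is Harnack equivalent to $0$. You fill in several details the paper leaves implicit (that $\Gamma(T)=\Gamma_p(T)$ for a compact normal operator, the degenerate case $\Gamma(T)=\emptyset$, and the justification of $w_{\rho}(N)=r(N)$ for normal $N$, which can also be seen more quickly from $r(N)\leq w_{\rho}(N)\leq w_{1}(N)=\Vert N\Vert=r(N)$), but the underlying argument is the same.
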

\begin{proof}By Corollary \ref{coro:he}, for all  $\lambda \in  \Gamma_p (T)$, we have $T =U \oplus \tilde{T}$ and $S =U \oplus \tilde{S}$ on $E \oplus E^{\bot}$,  where $E=\oplus_{\lambda \in  \Gamma_p (T)}Ker(T -\lambda I)$ and  $\tilde{T}$ and  $\tilde{S}$  are Harnack equivalent in $C_{\rho}(E^{\bot})$. Since $T \in C_{\rho}(H)$ is a compact normal operator  we also have
$$ w_{\rho} (\tilde{T})=\sup \{ \vert\lambda\vert, \lambda \in \sigma(T)\setminus  \Gamma_p (T)\}<1.$$
This means that $\tilde{T}$ and $\tilde{S}$ are Harnack equivalent to $0$.
\end{proof}

In the following proposition, we prove that the $\rho$-contractions belong to the same Harnack parts have the same kernel for their operatorial $\rho$-kernels.
\begin{proposition}\label{prop2.3}
Let $T_0 , T_1 \in C_{\rho}(H)$. If  $T_1$ and $T_2$ are Harnack equivalent in $C_{\rho}(H)$ then $Ker K_{z}^{\rho} (T_1)=Ker K_{z}^{\rho} (T_2)$ for all $z\in \mathbb{D}$.
\end{proposition}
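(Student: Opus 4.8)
The plan is to reduce everything to the comparison of positive operators furnished by condition (iii) of Theorem \ref{harnack}, together with the elementary fact that domination of a positive operator controls its kernel from the other side. First I would unwind the hypothesis: saying that $T_1$ and $T_0$ are Harnack equivalent means $T_1 \stackrel{H}{\prec} T_0$ and $T_0 \stackrel{H}{\prec} T_1$, so by the equivalence (i)$\Leftrightarrow$(iii) in Theorem \ref{harnack} there are constants $c_1 \geq 1$ and $c_0 \geq 1$ with
$$
K_{z}^{\rho}(T_1) \leq c_1^{2}\, K_{z}^{\rho}(T_0) \quad\text{and}\quad K_{z}^{\rho}(T_0) \leq c_0^{2}\, K_{z}^{\rho}(T_1), \qquad z \in \mathbb{D}.
$$

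Next I would record the standard linear-algebra observation that drives the proof: if $A, B \in B(H)$ are positive and $A \leq c^{2} B$ for some $c > 0$, then $Ker\,B \subseteq Ker\,A$. Indeed, for $x \in Ker\,B$ one has $\langle B x, x\rangle = 0$, hence $0 \leq \langle A x, x\rangle \leq c^{2}\langle B x, x\rangle = 0$, and since $A \geq 0$ this gives $\| A^{1/2} x\|^{2} = 0$, so $A x = 0$. This applies because the $\rho$-kernels $K_{z}^{\rho}(T_i)$ are positive for every $z \in \mathbb{D}$ (recall $T_i \in C_\rho(H)$).

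Finally I would combine the two inequalities: the first gives $Ker\,K_{z}^{\rho}(T_0) \subseteq Ker\,K_{z}^{\rho}(T_1)$ and the second gives $Ker\,K_{z}^{\rho}(T_1) \subseteq Ker\,K_{z}^{\rho}(T_0)$, for each fixed $z \in \mathbb{D}$, whence $Ker\,K_{z}^{\rho}(T_0) = Ker\,K_{z}^{\rho}(T_1)$ for all $z \in \mathbb{D}$. I do not expect any genuine obstacle here; the only point to state carefully is the positivity of the kernels (so that $\langle A x, x\rangle = 0$ forces $Ax = 0$), which is exactly the characterization of $C_\rho(H)$ recalled in the introduction. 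This proposition is really a bookkeeping consequence of Theorem \ref{harnack}, and it will serve as the first half of the sharper kernel-matching criterion announced for compact $\rho$-contractions.
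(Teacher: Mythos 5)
Your argument is correct and is essentially identical to the paper's own proof: the authors likewise invoke Theorem \ref{harnack} to sandwich the two $\rho$-kernels between positive multiples of each other and then use the positivity of $K_{z}^{\rho}(T_i)$ together with the square-root trick ($\langle Ax,x\rangle=0$ with $A\geq 0$ forces $\|A^{1/2}x\|=0$, hence $Ax=0$) to get both kernel inclusions. No differences worth noting beyond notation.
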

\begin{proof}
Since  $T_1{\stackrel{H}{\sim}} T_2$, then by Theorem \ref{harnack}, there exist $\alpha, \beta>0$ $(\alpha\leq 1$, $\beta \geq 1$) such that 
\begin{equation}\label{equ2222223}\alpha K_{z}^{\rho} (T_1) \leq  K_{z}^{\rho} (T_2)\leq \beta K_{z}^{\rho} (T_1), \quad \text{ for all } z\in \mathbb{D}.
\end{equation}
If $x\in Ker K_{z}^{\rho} (T_1)$, then by the right side of the inequality \eqref{equ2222223}, we also have,
$$ 0\leq \left\langle K_{z}^{\rho} (T_2)x ,x\right\rangle \leq  \beta \left\langle K_{z}^{\rho} (T_1)x ,x\right\rangle=0, $$
this implies that $\left\|\sqrt{K_{z}^{\rho} (T_2)}x\right\|=0$, so $K_{z}^{\rho} (T_2)x=0$, hence $Ker K_{z}^{\rho} (T_1)\subseteq Ker K_{z}^{\rho} (T_2)$ for all $z\in \mathbb{D}$. The converse inclusion holds from  the left-side of the inequality \eqref{equ2222223}.
\end{proof}
\begin{proposition} If $w(T)=1$ and $\Gamma(T)$ is  empty then there exists $z_0 \in \mathbb{T}$ such that $K_{z_0}^{2} (T)$ is not invertible.
\end{proposition}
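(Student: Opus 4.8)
The plan is to argue by contradiction: assume $K_{z_0}^2(T)$ is invertible for every $z_0\in\mathbb T$ and deduce $w(T)<1$, contrary to hypothesis. Since $w(T)=w_2(T)=1$ forces $T\in C_2(H)$, the membership criterion recalled above gives $\sigma(T)\subseteq\overline{\mathbb D}$ and
\[ K_z^2(T)=(I-\overline z T)^{-1}+(I-zT^*)^{-1}\ge 0,\qquad z\in\mathbb D. \]
The hypothesis $\Gamma(T)=\emptyset$ means $\sigma(T)$ is a compact subset of the open disc, so $r(T)<1$; consequently $I-\overline z T$ is invertible for all $|z|<1/r(T)$ (with $1/r(T)=+\infty$ when $r(T)=0$), the map $z\mapsto K_z^2(T)$ is norm-continuous on this neighbourhood of $\overline{\mathbb D}$, and letting $z$ approach $\mathbb T$ along radii yields $K_z^2(T)\ge 0$ for all $z\in\overline{\mathbb D}$.

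Under the contradiction hypothesis every $K_{z_0}^2(T)$ ($z_0\in\mathbb T$) is positive and boundedly invertible, hence bounded below; since $z\mapsto K_z^2(T)$ is norm-continuous near $\mathbb T$ and $\mathbb T$ is compact, a routine compactness argument produces $\varepsilon>0$ with $K_z^2(T)\ge\varepsilon I$ for all $z\in\mathbb T$. Using norm-continuity again I would find $\delta>0$, which we may shrink so that $1+\delta<1/r(T)$, such that $K_z^2(T)\ge\tfrac{\varepsilon}{2}I$ whenever $1-\delta\le|z|\le 1+\delta$. Combined with $K_z^2(T)\ge 0$ for $|z|\le 1$, this gives $K_z^2(T)\ge 0$ for \emph{all} $|z|\le 1+\delta$.

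Finally, I would rescale. Pick a real number $s$ with $1<s\le 1+\delta$ and $s\,r(T)\le 1$ (possible because $1/r(T)>1$). For real $s$ a direct computation from the definitions gives $K_z^2(sT)=K_{sz}^2(T)$; hence for every $z\in\mathbb D$ one has $|sz|<s\le 1+\delta$, so $K_z^2(sT)=K_{sz}^2(T)\ge 0$, while $\sigma(sT)=s\,\sigma(T)\subseteq\overline{\mathbb D}$ since $s\,r(T)\le 1$. By the membership criterion, $sT\in C_2(H)$, so $w(sT)\le 1$; but $w=w_2$ is positively homogeneous, so $w(sT)=s\,w(T)=s>1$ — a contradiction. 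Hence $K_{z_0}^2(T)$ is not invertible for some $z_0\in\mathbb T$.

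The crux, and really the only delicate step, is the passage from ``invertible at each point of $\mathbb T$'' to ``uniformly positive on a disc of radius slightly larger than $1$'': it rests on the compactness of $\mathbb T$ together with the norm-continuity and local boundedness of the resolvents in $z\mapsto K_z^2(T)$ across $\mathbb T$, which is exactly where the assumption $\Gamma(T)=\emptyset$ (equivalently $r(T)<1$) is used essentially — without it $K_z^2(T)$ need not even be defined on $\mathbb T$.
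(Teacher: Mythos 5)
Your proof is correct, but it proceeds along a genuinely different route from the paper's. The paper argues directly: since $w(T)=1$, one extracts unit vectors $x_n$ with $\langle Tx_n,x_n\rangle\to z_0=e^{i\omega}\in\mathbb{T}$; because $e^{i\omega}\notin\sigma(T)$ the vectors $y_n=(I-e^{-i\omega}T)x_n$ stay bounded away from $0$ along a subsequence, and the factorized form of the kernel gives $\langle K^{2}_{e^{i\omega}}(T)y_n,y_n\rangle=2\bigl(1-\mathrm{Re}(e^{-i\omega}\langle Tx_n,x_n\rangle)\bigr)\to 0$, so $0\in\sigma_{ap}(K^{2}_{e^{i\omega}}(T))$. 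That argument is constructive in the sense that it locates $z_0$ explicitly as any point of $\overline{W(T)}\cap\mathbb{T}$ and exhibits an approximate null sequence there. Your argument is by contradiction and non-constructive: you upgrade pointwise invertibility of the positive operators $K_{z}^{2}(T)$ on the compact set $\mathbb{T}$ to a uniform lower bound $\varepsilon I$, propagate it by norm-continuity of $z\mapsto K_z^2(T)$ (available on $\{|z|<1/r(T)\}$ precisely because $\Gamma(T)=\emptyset$ forces $r(T)<1$) to a disc of radius $1+\delta$, and then use the scaling identity $K_z^2(sT)=K_{sz}^2(T)$ for real $s$ together with the membership criterion to conclude $sT\in C_2(H)$ for some $s>1$, contradicting the homogeneity $w(sT)=s\,w(T)=s>1$. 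All the individual steps check out (positive invertible implies bounded below, uniform continuity on a compact annulus, $s\,r(T)\le 1$ guaranteeing $\sigma(sT)\subseteq\overline{\mathbb{D}}$). What your approach buys is a clean conceptual statement — uniform positivity of the kernel on $\overline{\mathbb{D}}$ would make $T$ a strict $2$-contraction, which dovetails with the Cassier--Suciu description of the Harnack part of $0$; what the paper's approach buys is the sharper information that $z_0$ can be taken to be any torus limit point of the numerical range and that $0$ lies in the approximate point spectrum of $K_{z_0}^2(T)$.
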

\begin{proof} Since $w(T)=1$, there exists a sequence $(x_n)_{n\geq 0}$ of unit vectors such that $\left\langle Tx_n , x_n\right\rangle$ converge to $z_0=e^{i\omega}\in \mathbb{T}$. Set $y_n = (I -e^{-i\omega}T)x_n $, then $\left\|y_n\right\|$ not converge to $0$. If not we have $e^{i\omega}\in \sigma(T)$, this contradicts that $\Gamma(T)$ is  empty. Thus, we may suppose that $\left\|y_n\right\|\rightarrow l>0$ and we have
\begin{align*}
 \left\langle K_{e^{i\omega}}^{2} (T)y_n ,y_n\right\rangle 
 &= 2 \left\langle (I-Re(e^{-i\omega}T))x_n ,x_n\right\rangle\\
  &= 2(1-  Re(e^{-i\omega}\left\langle Tx_n ,x_n\right\rangle)),
\end{align*} 
hence
$$ \left\langle K_{e^{i\omega}}^{2} (T)\frac{y_n}{\left\|y_n\right\|} ,\frac{y_n}{\left\|y_n\right\|}\right\rangle=2 \frac{(1-  Re(e^{-i\omega}\left\langle Tx_n ,x_n\right\rangle))}{\left\|y_n\right\|^{2}} \rightarrow 0.$$
This implies that $0\in \sigma_{ap}(\sqrt{K_{e^{i\omega}}^{2} (T)})$ and hence $0\in \sigma_{ap}(K_{e^{i\omega}}^{2} (T))$.
\end{proof}
\subsection{Numerical range properties and Harnack domination}

Firstly, we give a proposition which is useful in this subsection.
\begin{proposition}\label{rho1<<rho2}
Let $T_0 , T_1 \in C_{\rho_1}(H)$ and $\rho_2 \geq \rho_1$. Then we have
\begin{enumerate}
	\item [(i)] If $T_1\stackrel{H}{\underset{c}{\prec}}T_0$ in $C_{\rho_1}(H)$, then $T_1\stackrel{H}{\underset{c}{\prec}}T_0$ in $C_{\rho_2}(H)$.
	\item [(ii)] If $T_1\stackrel{H}{\underset{c}{\sim}}T_0$ in $C_{\rho_1}(H)$, then $T_1\stackrel{H}{\underset{c}{\sim}}T_0$ in $C_{\rho_2}(H)$.
\end{enumerate}
\end{proposition}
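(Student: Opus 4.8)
The plan is to reduce both assertions to the operatorial-kernel characterisation (iii) of Theorem~\ref{harnack} and to exploit the fact that, for a fixed operator, changing $\rho$ shifts its $\rho$-kernel only by a multiple of the identity. First I would record the (standard) inclusion $C_{\rho_1}(H)\subseteq C_{\rho_2}(H)$ for $\rho_1\le\rho_2$, so that $T_0,T_1\in C_{\rho_2}(H)$ and the Harnack domination in $C_{\rho_2}(H)$ is meaningful.

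For part (i), the starting observation is the identity
\[
K_{z}^{\rho_2}(T)=K_{z}^{\rho_1}(T)+(\rho_2-\rho_1)I,\qquad z\in\mathbb{D},
\]
which is immediate from the definition $K_{z}^{\rho}(T)=(I-\overline{z}T)^{-1}+(I-zT^{*})^{-1}+(\rho-2)I$, since only the term $(\rho-2)I$ depends on $\rho$. By hypothesis and Theorem~\ref{harnack}(iii), $c^{2}K_{z}^{\rho_1}(T_0)-K_{z}^{\rho_1}(T_1)\ge 0$ for every $z\in\mathbb{D}$. Applying the identity above to $T_0$ (scaled by $c^{2}$) and to $T_1$ and subtracting gives
\[
c^{2}K_{z}^{\rho_2}(T_0)-K_{z}^{\rho_2}(T_1)=\bigl(c^{2}K_{z}^{\rho_1}(T_0)-K_{z}^{\rho_1}(T_1)\bigr)+(c^{2}-1)(\rho_2-\rho_1)I .
\]
The first summand on the right is positive semidefinite by assumption, and the scalar $(c^{2}-1)(\rho_2-\rho_1)$ is $\ge 0$ because $c\ge1$ and $\rho_2\ge\rho_1$; hence the whole expression is $\ge 0$, i.e.\ $K_{z}^{\rho_2}(T_1)\le c^{2}K_{z}^{\rho_2}(T_0)$ for all $z\in\mathbb{D}$. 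Invoking Theorem~\ref{harnack}(iii) once more, now in $C_{\rho_2}(H)$, yields $T_1\stackrel{H}{\underset{c}{\prec}}T_0$ in $C_{\rho_2}(H)$.

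For part (ii), I would simply unwind the definition: $T_1\stackrel{H}{\underset{c}{\sim}}T_0$ in $C_{\rho_1}(H)$ means $T_1\stackrel{H}{\underset{c}{\prec}}T_0$ and $T_0\stackrel{H}{\underset{c}{\prec}}T_1$ in $C_{\rho_1}(H)$; applying part (i) to each of these two dominations produces $T_1\stackrel{H}{\underset{c}{\prec}}T_0$ and $T_0\stackrel{H}{\underset{c}{\prec}}T_1$ in $C_{\rho_2}(H)$, that is, $T_1\stackrel{H}{\underset{c}{\sim}}T_0$ in $C_{\rho_2}(H)$.

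I do not expect any serious obstacle here; the proof is essentially the one-line computation above. The only points that need attention are the bookkeeping that isolates the monotonicity — namely that raising $\rho$ adds the fixed term $(\rho_2-\rho_1)I$ to each $\rho$-kernel while the factor $c^{2}\ge1$ multiplies it only on the dominating side, which is exactly what keeps the inequality valid — together with the preliminary remark that $C_{\rho_1}(H)\subseteq C_{\rho_2}(H)$, which is what makes the conclusion sit inside the correct class.
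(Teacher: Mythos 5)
Your proof is correct and follows essentially the same route as the paper: both arguments rest on the identity $K_{z}^{\rho_2}(T)=K_{z}^{\rho_1}(T)+(\rho_2-\rho_1)I$, the observation that $c^2\geq 1$ makes the added scalar term harmless on the dominating side, and the kernel characterisation of Theorem~\ref{harnack}, with (ii) deduced from (i) by symmetry. No discrepancies to report.
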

\begin{proof}
(i) Since the $C_{\rho}$ classes increase with $\rho$, the two operators $T_0$ and $T_1$
belong to $C_{\rho_2}(H)$. From Theorem \ref{harnack}, we know that there exists $c \geq 1$ such that $K_{z}^{\rho_1} (T_1) \leq c^{2} K_{z}^{\rho_1} (T_0)$ for all $z \in \mathbb{D}$. As $c\geq 1$, it yields to
$$
K_{z}^{\rho_2} (T_1)=K_{z}^{\rho_1} (T_1)+(\rho_2-\rho_1)I
\leq c^{2} \left[ K_{z}^{\rho_1} (T_0)+(\rho_2-\rho_1)I\right]=c^{2}K_{z}^{\rho_2} (T_0)  .$$
Using again Theorem \ref{harnack}, we obtained the desired conclusion.
 
The assertion (ii) is a direct consequence of (i). 
\end{proof}

Let $T \in B(H)$, we denote by $W(T)$ the numerical range of $T$ which is the set given by
$$
W(T)=\left\lbrace \left\langle Tx , x\right\rangle; x \in H, \Vert x \Vert=1\right\rbrace. 
$$
The following result give relationships between numerical range and Harnack domination.
\begin{theorem}\label{thm:numr}Let $T_0, T_1 \in C_{\rho}(H)$ with $1 \leq \rho
\leq 2$, then we have:
\begin{enumerate}
    \item [(i)] Assume that $\rho=1$ and $T_1{\stackrel{H}{\prec}} T_0$, then $\overline{W(T_0)}\cap \mathbb{T} =
	\overline{W(T_1)}\cap \mathbb{T}$.
	\item [(ii)] Suppose that $1 < \rho
\leq 2$, $T_1{\stackrel{H}{\prec}} T_0$ and $\Gamma(T_0)=\emptyset$, then $\overline{W(T_0)}\cap \mathbb{T} \subseteq 
	\overline{W(T_1)}\cap \mathbb{T}$.
	\item [(iii)] If $T_1{\stackrel{H}{\sim}} T_0$, then
	$\overline{W(T_0)}\cap \mathbb{T} = 
	\overline{W(T_1)}\cap \mathbb{T}$.
\end{enumerate}
\end{theorem}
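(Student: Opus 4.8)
The plan is to prove the three parts of Theorem~\ref{thm:numr} by relating the numerical range to the behavior of the operatorial $\rho$-kernel near the torus, exploiting the Harnack inequality $K_z^\rho(T_1)\le c^2 K_z^\rho(T_0)$ from Theorem~\ref{harnack}(iii) together with the spectral comparison from Theorem~\ref{spectral+torus}.

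First I would record the key elementary identity. For $\lambda=e^{i\omega}\in\mathbb{T}$ and $z=(1-t)\lambda$ with $t\in(0,1)$, a short computation gives, for a unit vector $x$,
\begin{equation*}
\left\langle K_{z}^{\rho}(T)x,x\right\rangle
=\frac{\rho+2(1-\rho)\,\mathrm{Re}(\overline{z})\langle Tx,x\rangle\cdot(\dots)}{|1-\overline{z}T|}\ \text{-- more usefully,}\
\langle K_z^\rho(T)x,x\rangle
=\big\langle\big(\rho I+2(1-\rho)\mathrm{Re}(\overline z T)+(\rho-2)|z|^2T^*T\big)(I-\overline z T)^{-1}x,(I-\overline z T)^{-1}x\big\rangle^{-1}
\end{equation*}
is awkward; instead I would work from the form used in the proof of Theorem~\ref{spectral+torus}: with $y=(I-\overline z T)x$ one has $\langle K_z^\rho(T)(I-\overline zT)^{-1}y,\cdot\rangle$ controlled by $\rho+2(1-\rho)\mathrm{Re}(\overline z\langle Tx,x\rangle)+(\rho-2)|z|^2\|Tx\|^2$ from below (when $\rho\le 2$, dropping the last nonpositive term if needed, or keeping it since $|z|^2\|Tx\|^2$ is bounded). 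The point is: if $\langle Tx_n,x_n\rangle\to e^{i\omega}$ along unit vectors $x_n$, then substituting $z=(1-t)e^{i\omega}$, the numerator behaves like $\rho+2(1-\rho)(1-t)+(\rho-2)(1-t)^2+o(1)=2t+o(t)$ as $t\to 0$ after $n\to\infty$; meanwhile $|1-\overline z T|$-type denominators are of order $t^2$ on the relevant vectors $(I-\overline zT)x_n$ unless those vectors are small, which would force $e^{i\omega}\in\Gamma(T)$. This is exactly the mechanism already deployed in Theorem~\ref{spectral+torus}, and I would reuse it: $e^{i\omega}\in\overline{W(T)}\setminus\Gamma(T)$ forces a quantitative blow-up $\limsup_n\langle K_{(1-t)e^{i\omega}}^\rho(T)v_n,v_n\rangle\gtrsim t^{-2}$ for suitable unit vectors $v_n$.

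For part (ii), suppose $1<\rho\le 2$, $T_1\stackrel{H}{\prec}T_0$, $\Gamma(T_0)=\emptyset$, and let $e^{i\omega}\in\overline{W(T_0)}\cap\mathbb{T}$. By the blow-up just described applied to $T_0$, there are unit vectors $v_n$ (of the form $(I-\overline z T_0)x_n/\|\cdot\|$) with $\langle K_{(1-t)e^{i\omega}}^\rho(T_0)v_n,v_n\rangle\to\infty$ fast; the Harnack inequality $K_z^\rho(T_1)\le c^2K_z^\rho(T_0)$ gives no lower bound directly, so instead I would run the argument in the \emph{other} direction. Precisely: the Harnack domination and Corollary/Theorem~\ref{spectral+torus} give $\Gamma(T_1)\subseteq\Gamma(T_0)=\emptyset$, so $K_z^\rho(T_1)$ is also locally bounded near every torus point. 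Now argue by contradiction: if $e^{i\omega}\notin\overline{W(T_1)}$, then $\mathrm{Re}(e^{-i\omega}\langle T_1x,x\rangle)\le 1-\delta$ for all unit $x$, and one shows (again via the identity, using $\rho\le 2$ so the $(\rho-2)$-term helps or is harmless) that $K_{(1-t)e^{i\omega}}^\rho(T_1)\ge \frac{c_\delta}{t}I$ or at least is \emph{not} uniformly bounded — more carefully, the correct statement is that $e^{i\omega}\notin\overline{W(T_1)}$ together with $1<\rho$ implies $K_{(1-t)e^{i\omega}}^\rho(T_1)$ \emph{is} uniformly bounded, while $e^{i\omega}\in\overline{W(T_0)}$ with $\Gamma(T_0)=\emptyset$ implies a sequence with $\langle K_{(1-t)e^{i\omega}}^\rho(T_0)v_n,v_n\rangle\le c^{-2}\langle K_{(1-t)e^{i\omega}}^\rho(T_1)v_n,v_n\rangle$ bounded; pushing $t\to 0$ against the $2t+o(t)$ numerator lower bound for $T_0$ yields $2t\le Ct^2$, a contradiction as $t\to 0$. (Here the hypothesis $\rho>1$ is what makes $e^{i\omega}\notin\overline{W(T_1)}\Rightarrow$ boundedness of the kernel; for $\rho=1$ this fails, which is why (i) is only an equality of torus-intersections and needs the two-sided domination implicit in $\Gamma(T_1)\subseteq\Gamma(T_0)$ plus the classical contraction fact that $\overline{W(T)}\cap\mathbb{T}$ consists of \emph{reducing} eigenvalues.)

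For part (i), $\rho=1$: here $K_z^1(T)\ge 0$ is essentially $(I-\overline zT)^{-1}+(I-zT^*)^{-1}-I$, and it is classical that for a contraction, $e^{i\omega}\in\overline{W(T)}\cap\mathbb{T}$ iff there are unit vectors with $\|(I-e^{-i\omega}T)x_n\|\to 0$ and $\|(I-e^{i\omega}T^*)x_n\|\to 0$ simultaneously — i.e. $e^{i\omega}$ is a reducing eigenvalue in the approximate sense; equivalently $e^{i\omega}\in\Gamma(T)$ behaves like a boundary point where both $I-T^*T$ and $I-TT^*$ degenerate. One direction of (i) follows from Theorem~\ref{spectral+torus} ($\overline{W(T_1)}\cap\mathbb{T}\subseteq\Gamma(T_1)\subseteq\Gamma(T_0)$ together with the observation that any $e^{i\omega}\in\overline{W(T_0)}\cap\mathbb{T}$ gives $\langle(I-\mathrm{Re}(e^{-i\omega}T_0))x_n,x_n\rangle\to 0$, hence by the Harnack inequality applied to the $\rho=1$ kernel the same along a related sequence for $T_1$ — this requires the lower bound $\alpha K_z^1(T_0)\le K_z^1(T_1)$, i.e. only the hypothesis $T_1\prec T_0$ is not enough, so one uses the chain through (iii) or notes that $\langle(I-\mathrm{Re}(e^{-i\omega}T_1))x_n,x_n\rangle\le \langle(I-\mathrm{Re}(e^{-i\omega}T_0))x_n,x_n\rangle$ fails and instead extracts it from $K_z^1(T_1)\le c^2K_z^1(T_0)$ by testing on $(I-e^{-i\omega}T_0)x_n$ and controlling cross terms as in the proof of Theorem~\ref{spectral+torus}). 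Part (iii) is then immediate: applying the one-sided inclusions in both directions — (i) for $\rho=1$ is already an equality; for $1<\rho\le 2$ one uses $\Gamma(T_0)=\Gamma(T_1)$ from Corollary~\ref{harnackequ}, and if this common set is empty then (ii) applied to both $T_1\prec T_0$ and $T_0\prec T_1$ gives the two inclusions, while if it is nonempty one handles torus points in $\Gamma$ by the block-decomposition of Corollary~\ref{coro:he}-type reasoning (eigenvalues in $\mathbb{T}$ split off as reducing unitary parts common to $T_0$ and $T_1$, contributing the same points to both numerical ranges) and torus points not in $\Gamma$ by (ii)-style arguments localized away from $\Gamma$.

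The main obstacle is part (ii): making rigorous the claim that for $1<\rho\le 2$, a torus point in $\overline{W(T_0)}$ but not in $\Gamma(T_0)$ forces a precise $t^{-2}$-rate blow-up of $\langle K_{(1-t)e^{i\omega}}^\rho(T_0)v_n,v_n\rangle$ along unit vectors $v_n=(I-\overline zT_0)x_n/\|(I-\overline zT_0)x_n\|$, and simultaneously that the \emph{absence} of $e^{i\omega}$ from $\overline{W(T_1)}$ gives uniform boundedness of $K_{(1-t)e^{i\omega}}^\rho(T_1)$ — these two facts clash with the Harnack inequality exactly as $t\to 0$. The delicate points are: (a) the denominators $\|(I-\overline zT_0)x_n\|$ are not bounded below a priori, but if they tend to $0$ then $e^{i\omega}\in\sigma_{ap}(T_0)=$ contradiction with $\Gamma(T_0)=\emptyset$, so along a subsequence they are $\ge\eta>0$; (b) controlling the cross terms $\langle K_z^\rho(T_0)x_n,y_n\rangle$ with $y_n=(I-\overline zT_0)x_n$ — here one invokes the smallness of $1-\mathrm{Re}(e^{-i\omega}\langle T_0x_n,x_n\rangle)$ plus Lemma~\ref{lem:2.4}-type bounds on $\|(I-\overline zT_0)K_z^\rho(T_0)(I-zT_0^*)\|$ to keep the cross terms of lower order; (c) the sign of the $(\rho-2)|z|^2\|T_0x_n\|^2$ term — since $\rho\le 2$ this term is $\le 0$, so in the lower bound for the numerator $\rho+2(1-\rho)\mathrm{Re}(\overline z\langle T_0x_n,x_n\rangle)+(\rho-2)|z|^2\|T_0x_n\|^2$ one must be careful, but $\|T_0x_n\|^2\le 1+o(1)$ and $|z|^2=(1-t)^2$, so this term is $\ge(\rho-2)(1-t)^2\cdot(1+o(1))$, and combined with the first two the numerator is $\rho+2(1-\rho)(1-t)+(\rho-2)(1-t)^2+o(1)=2t+o(t)+o(1)$, positive of order $t$ for $n$ large then $t$ small, which is exactly what is needed. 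Once these are pinned down the contradiction $2t\le Ct^2$ closes part (ii), and parts (i) and (iii) follow by the bookkeeping sketched above.
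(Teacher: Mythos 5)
Your proposal misidentifies the basic mechanism. For a torus point $\lambda=e^{i\omega}$ of $\overline{W(T_0)}$ with $\Gamma(T_0)=\emptyset$, nothing blows up: $K_z^{\rho}(T_0)$ is \emph{uniformly bounded} on $\overline{\mathbb{D}}$, so your opening claim in (ii) that there are unit vectors with $\langle K^{\rho}_{(1-t)\lambda}(T_0)v_n,v_n\rangle\to\infty$ "fast" is false, and the $2t\le Ct^{2}$ contradiction you keep reaching for is the approximate-eigenvector mechanism of Theorem \ref{spectral+torus}; it has no counterpart here because numerical-range points of $\mathbb{T}$ need not be spectral. What actually happens is the opposite phenomenon, degeneracy: with $y_n=(I-e^{-i\omega}T_0)x_n$ and $\|y_n\|\ge\gamma>0$ (this is where $\Gamma(T_0)=\emptyset$ enters), one has $\langle K^{2}_{e^{i\omega}}(T_0)u_n,u_n\rangle=\tfrac{2}{\|y_n\|^{2}}\langle (I-\mathrm{Re}(e^{-i\omega}T_0))x_n,x_n\rangle\to 0$, and the domination $K^{2}_{z}(T_1)\le c^{2}K^{2}_{z}(T_0)$ transfers this \emph{smallness} from $T_0$ to $T_1$ (your displayed chain has the inequality reversed, and "uniform boundedness of $K(T_1)$" clashes with nothing since it already follows from $\Gamma(T_1)\subseteq\Gamma(T_0)=\emptyset$). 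The step you never supply is the conversion of $\langle \mathrm{Re}(e^{-i\omega}T_1)v_n,v_n\rangle\to 1$ into $\langle T_1v_n,v_n\rangle\to e^{i\omega}$, which uses $w(T_1)\le 1$, i.e.\ the reduction to $\rho=2$ via Proposition \ref{rho1<<rho2}; this is precisely where the hypothesis $\rho\le 2$ is spent. (A contradiction variant is salvageable -- $\lambda\notin\overline{W(T_1)}$ gives $K^{2}_{e^{i\omega}}(T_1)\ge \delta' I>0$ -- but that is a lower bound, not the upper bound you assert.)

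Part (i) has a second gap: the nontrivial inclusion is $\overline{W(T_0)}\cap\mathbb{T}\subseteq\overline{W(T_1)}\cap\mathbb{T}$, and you explicitly claim one-sided domination "is not enough" for it. It is: multiplying $0\le K_{r,\theta}(T_1)\le c^{2}K_{r,\theta}(T_0)$ by the nonnegative function $1-\mathrm{Re}(\overline{\lambda}e^{i\theta})$, integrating against Haar measure and letting $r\to 1$ yields the operator inequality $0\le I-\mathrm{Re}(\overline{\lambda}T_1)\le c^{2}\bigl[I-\mathrm{Re}(\overline{\lambda}T_0)\bigr]$, which transfers the degeneracy along the \emph{same} unit vectors $x_n$ with no cross terms to control. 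Your fallback of testing on $(I-e^{-i\omega}T_0)x_n$ cannot work at $\rho=1$, since for a contraction $\langle T_0x_n,x_n\rangle\to e^{i\omega}$ forces $\|(I-e^{-i\omega}T_0)x_n\|\to 0$ (so $\lambda\in\Gamma(T_0)$ and the normalization degenerates). The reverse inclusion for $\rho=1$ and the outline of (iii) (splitting off $\Gamma$-points and applying (ii) in both directions) are fine, but they rest on (i) and (ii), which as written are not established.
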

\begin{proof}
(i) Let $\lambda=e^{i\omega} \in \overline{W(T_0)}\cap \mathbb{T}$, then there exists a sequence 
$(x_n)$ of unit vectors such that 
$\left\langle T_0x_n , x_n\right\rangle \longrightarrow \lambda$. 
We have for some $c\geq 1$, $0\leq K_{r, \theta} (T_1) \leq c^{2} K_{r, \theta}(T_0)$ for all $z \in \mathbb{D}$. Multiplying these inequalities by the nonnegative function
$1-Re(\overline{\lambda} e^{i\theta})$, integrating with respect to the Haar measure $m$ and letting $r$ to $1$, we get
$0 \leq I-Re(\overline{\lambda} T_1)\leq c^{2}\left[ I-Re(\overline{\lambda} T_0)\right]  $. We deduce that $1-Re(\overline{\lambda} \left\langle T_1 x_n,
x_n\right\rangle) \longrightarrow 0$. Since $\left\langle T_1 x_n,
x_n\right\rangle $ belongs to the closed unit disc, it forces $\left\langle T_1 x_n,
x_n\right\rangle \longrightarrow \lambda $. Hence $\overline{W(T_0)}\cap \mathbb{T} 
\subseteq \overline{W(T_1)}\cap \mathbb{T}$. Now , let $\lambda \in 
\overline{W(T_1)}\cap \mathbb{T}$, then there exists a sequence 
$(y_n)$ of unit vectors such that 
$\left\langle T_1 y_n , y_n\right\rangle \longrightarrow \lambda$. As $T_1$ is a contraction, it follows that
$1 = \lim \vert \left\langle T_1 y_n , y_n\right\rangle \vert
\leq \underline{\lim}\Vert T_1 y_n \Vert \leq \overline{\lim}\Vert T_1 y_n \Vert
\leq 1$, thus $\Vert T_1 y_n \Vert \longrightarrow 1$. 
It implies $\Vert T_1 y_n - \lambda y_n \Vert^2=\Vert T_1 y_n \Vert^2
-2Re(\overline{\lambda}\left\langle T_1 y_n , y_n\right\rangle )+1\longrightarrow 0
$. Consequently, we have $\lambda \in \Gamma(T_1)$, by using
Theorem \ref{spectral+torus} we see that $\lambda \in \Gamma(T_0) \subseteq \overline{W(T_0)}\cap \mathbb{T}$. So we get the desired equality.  

(ii) Tacking into account Proposition \ref{rho1<<rho2}, it suffices to treat the case 
where $\rho=2$.
Let $\lambda=e^{i\omega} \in \overline{W(T_0)}\cap \mathbb{T}$, then there exists a sequence 
$(x_n)$ of unit vectors such that 
$\left\langle Tx_n , x_n\right\rangle \longrightarrow \lambda$. Set
$y_n=(I-e^{-i\omega}T_0)x_n$, since $\Gamma(T_0)=\emptyset$ we necessarily have
$\gamma=\inf\{\Vert y_n\Vert; n\geq 0\}>0$. Tacking $u_n=y_n /\Vert y_n \Vert $, we can see that
\begin{align*}
\left\langle K^{2}_{e^{i\omega}}(T_0)u_n , u_n\right\rangle
&=\frac{2}{\Vert y_n\Vert^2} \left\langle (I- Re(e^{-i\omega}T_0))x_n , x_n\right\rangle\\
&\leq \frac{2}{\gamma^2}  \left\langle (I- Re(e^{-i\omega}T_0))x_n , x_n\right\rangle\longrightarrow 0.
\end{align*}
Since $T_1{\stackrel{H}{\prec}} T_0$, there exists $c \geq 1$ such that
\begin{equation}\label{t0ht1}
 K_{z}^{2} (T_1) \leq c^{2} K_{z}^{2} (T_0), \quad \text{ for all } z\in \mathbb{D}.
 \end{equation} 
On the one hand, if $\lambda \in \Gamma(T_1)$ we have obviously $\lambda \in 
\overline{W(T_1)}$.
On the other hand, if $\lambda \notin \Gamma(T_1)$ we can extended \eqref{t0ht1} at
$z=\lambda$ and we get
$$
0 \leq \left\langle K^{2}_{e^{i\omega}}(T_1)u_n , u_n\right\rangle
\leq c^{2}
\left\langle K^{2}_{e^{i\omega}}(T_0)u_n , u_n\right\rangle
\longrightarrow 0,
$$
hence $\left\langle K^{2}_{e^{i\omega}}(T_1)u_n , u_n\right\rangle \longrightarrow 0$.
Observe that $\inf\{\Vert (I- Re(e^{-i\omega}T_1))^{-1}u_n \Vert; n\geq 0\}\geq 
\frac{1}{3}$.
Set $v_n=(1/\Vert (I- e^{-i\omega}T_1)^{-1}u_n \Vert)(I- Re(e^{-i\omega}T_1))^{-1}u_n$, we obtain
$$
\left\langle (I- Re(e^{-i\omega}T_1))v_n , v_n\right\rangle
\leq \frac{9}{2}  \left\langle K^{2}_{e^{i\omega}}(T_1)u_n , u_n\right\rangle\longrightarrow 0.
$$
We deduce that $ \left\langle Re(e^{-i\omega}T_1) v_n , v_n\right\rangle \longrightarrow 1$.
As $T_1 \in C_2(H)$, it yields to:
$$
1 \geq \vert \left\langle T_1 v_n , v_n\right\rangle \vert^2
=\vert \left\langle Re(e^{-i\omega}T_1) v_n , v_n\right\rangle \vert^2
+\vert \left\langle Im(e^{-i\omega}T_1) v_n , v_n\right\rangle \vert^2,
$$
and we derive successively that $\left\langle Im(e^{-i\omega}T_1) v_n , v_n\right\rangle \longrightarrow 0$ and $\left\langle T_1 v_n , v_n\right\rangle 
\longrightarrow \lambda$. Thus $\lambda \in \overline{W(T_1)}\cap \mathbb{T} $ and
it ends the proof of (i).

(iii) As before, we may suppose that $\rho=2$. Assume that $T_1{\stackrel{H}{\sim}} T_0$ and $\lambda \in \overline{W(T_0)}\cap \mathbb{T}$. By Corollary \eqref{harnackequ}, we have $\Gamma (T_0)=\Gamma (T_1)$. So, if $\lambda\in \Gamma (T_0)$ then  $\lambda\in \overline{W(T_1)}\cap \mathbb{T}$. Now, if $\lambda\notin \Gamma (T_0)$, we proceed as in the second item  (ii) to prove that $\lambda\in \overline{W(T_1)}\cap \mathbb{T}$. Interchanging the roles of $T_0$ and $T_1$ gives the desired equality.
\end{proof}
\begin{remark}\label{num-range} 

(1) The condition $\Gamma(T_0)=\emptyset$, in (ii), cannot be relaxed. In fact, we have $T_1= 0_H\stackrel{H}{\underset{c}{\prec}}I=T_0$ in $C_{\rho}(H)$ ($1<\rho \leq 2$) with   $c=\sqrt{\dfrac{\rho}{\rho-1}}$ but  $\overline{W(T_0)}\cap \mathbb{T}=\{1\}$ and $\overline{W(T_1)}\cap \mathbb{T}=\emptyset$.

(2) When $T$ is a contraction, we have $\overline{W(T)}\cap \mathbb{T}=\Gamma(T)$
(see for instance the end of the proof of (i)). So, the assertion (i) of Theorem
\ref{thm:numr} restore, in the case of domination, the equality of the  point spectrum in the torus obtained by C. Badea, D. Timotin and L. Suciu in \cite{BaTiSu} by another way.

\end{remark}
\begin{corollary} Let $T_0 \in C_{\rho}(H)$ with $1\leq \rho \leq 2$. If $\overline{W(T_0)}=\overline{\mathbb{D}}$, and satisfies $\Gamma(T_0)=\emptyset$ when $\rho \neq 1$, then $\overline{W(T_1)}=\overline{\mathbb{D}}$ for every $T_1 \in C_{\rho}(H)$ such that $T_1\stackrel{H}{{\prec}} T_0$. Furthermore, in the case of Harnack equivalence, we have $\overline{W(T_1)}=\overline{\mathbb{D}}$ as soon as 
$\overline{W(T_0)}=\overline{\mathbb{D}}$. 
\end{corollary}
\begin{proof} By Theorem \ref{thm:numr}, Proposition \ref{rho1<<rho2} and the convexity theorem of Toeplitz-Hausdorff, we obtain the desired conclusions.
\end{proof}

\subsection{Harnack parts in the space of compact operators}

Denote by $\mathcal{K}(H)$ the set of all compact operators. We have
\begin{theorem}\label{thm:hpc1}Let $T \in C_{\rho}(H)\cap \mathcal{K}(H)$ with $w_{\rho}(T)=1$  and  $\Gamma_p (T)$ is empty. Then  $S \in C_{\rho}(H)\cap \mathcal{K}(H)$ is Harnack equivalent to $T$ if and only if $ker (K_{z}^{\rho} (S))=ker (K_{z}^{\rho} (T))$ for all $z\in \mathbb{T}$.
\end{theorem}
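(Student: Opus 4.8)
\emph{Plan.} The idea is to reduce the Harnack comparison to an inequality between the two operatorial $\rho$-kernels \emph{on the torus alone}, using the minimum principle for harmonic functions, and then to extract a uniform constant from the fact that compactness forces $\ker K_z^\rho$ to be finite dimensional with $0$ an isolated point of the spectrum. First I would record the structural consequences of the hypotheses. Since $T\in\mathcal K(H)$ and $\Gamma_p(T)=\emptyset$, the nonzero spectrum of $T$ consists of eigenvalues that can accumulate only at $0$, so $\sigma(T)\subseteq\mathbb D$ and $r(T)<1$; hence $z\mapsto K_z^\rho(T)$ is norm-analytic on a neighbourhood of $\overline{\mathbb D}$ and positive there. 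Moreover $K_z^\rho(T)-\rho I=\overline zT(I-\overline zT)^{-1}+zT^*(I-zT^*)^{-1}$ is compact, so for each $z\in\overline{\mathbb D}$ the space $\ker K_z^\rho(T)$ is finite dimensional and $K_z^\rho(T)$ is bounded below on its orthogonal complement. Finally, for each fixed $x$ the scalar function $z\mapsto\langle K_z^\rho(T)x,x\rangle=2\,\mathrm{Re}\,\langle(I-\overline zT)^{-1}x,x\rangle+(\rho-2)\|x\|^2$ is harmonic on $\overline{\mathbb D}$.

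For the direct implication, suppose $S\stackrel{H}{\sim}T$. By Corollary \ref{harnackequ}, $\Gamma(S)=\Gamma(T)=\emptyset$, so $S$ enjoys the same regularity as $T$. Theorem \ref{harnack}, applied in both directions, yields $\alpha,\beta>0$ with $\alpha K_z^\rho(T)\le K_z^\rho(S)\le\beta K_z^\rho(T)$ for all $z\in\mathbb D$; by continuity this persists on $\overline{\mathbb D}$, and the sandwiching argument of Proposition \ref{prop2.3} then gives $\ker K_z^\rho(S)=\ker K_z^\rho(T)$ for every $z\in\mathbb T$.

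For the converse, assume $\ker K_z^\rho(S)=\ker K_z^\rho(T)$ for all $z\in\mathbb T$; note that this presupposes $K_z^\rho(S)$ to be a bounded operator for $z\in\mathbb T$, which forces $\Gamma(S)=\emptyset$, hence $r(S)<1$ and $S$ has the same regularity as above. \textbf{Step 1 (passage to the torus).} It suffices to find $c\ge1$ with $K_{e^{i\theta}}^\rho(S)\le c^2K_{e^{i\theta}}^\rho(T)$ and $K_{e^{i\theta}}^\rho(T)\le c^2K_{e^{i\theta}}^\rho(S)$ for all $\theta$. Indeed, for fixed $x$ the function $z\mapsto\langle(c^2K_z^\rho(T)-K_z^\rho(S))x,x\rangle$ is harmonic on $\overline{\mathbb D}$ and nonnegative on $\mathbb T$, hence nonnegative on $\mathbb D$ by the Poisson representation; so $K_z^\rho(S)\le c^2K_z^\rho(T)$ throughout $\mathbb D$, and Theorem \ref{harnack}(i)$\Leftrightarrow$(iii) gives $S\stackrel{H}{\underset c\prec}T$, and symmetrically $T\stackrel{H}{\underset c\prec}S$, whence $S\stackrel{H}{\sim}T$. \textbf{Step 2 (pointwise comparison on the torus).} Fix $\theta$, put $N_\theta=\ker K_{e^{i\theta}}^\rho(S)=\ker K_{e^{i\theta}}^\rho(T)$ and let $P_\theta$ be the orthogonal projection onto $N_\theta$. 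Since $K_{e^{i\theta}}^\rho(T)$ is positive and equal to $\rho I$ plus a compact operator, $0$ is isolated in its spectrum, so $K_{e^{i\theta}}^\rho(T)\ge\delta_\theta(I-P_\theta)$ for some $\delta_\theta>0$; as $K_{e^{i\theta}}^\rho(S)$ vanishes on $N_\theta$ we also have $K_{e^{i\theta}}^\rho(S)\le\|K_{e^{i\theta}}^\rho(S)\|(I-P_\theta)$, whence $K_{e^{i\theta}}^\rho(S)\le\bigl(\|K_{e^{i\theta}}^\rho(S)\|/\delta_\theta\bigr)K_{e^{i\theta}}^\rho(T)$, and symmetrically.

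\textbf{Main obstacle.} What remains — and the step I expect to be hardest — is to make these constants uniform over the compact circle: $\sup_\theta\|K_{e^{i\theta}}^\rho(S)\|<\infty$ is immediate from norm-continuity, but one must show $\inf_\theta\delta_\theta>0$ (and its $S$-analogue). I would argue locally: near a $\theta_0$ with $N_{\theta_0}=0$ the two kernels are invertible and, by continuity, uniformly bounded below and above, so the comparison is trivial there; near a $\theta_0$ with $N_{\theta_0}\neq0$ I would decompose along $N_{\theta_0}\oplus N_{\theta_0}^{\perp}$ and use a Schur-complement factorisation — legitimate because the $N_{\theta_0}^{\perp}$-corner stays boundedly invertible for $\theta$ near $\theta_0$ — to reduce $K_{e^{i\theta}}^\rho(T)$, up to a bounded invertible congruence, to $\widetilde a_\theta\oplus d_\theta$ with $d_\theta$ uniformly bounded below and $\widetilde a_\theta$ a positive $(\dim N_{\theta_0})\times(\dim N_{\theta_0})$ matrix vanishing at $\theta_0$, and similarly for $S$. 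The comparison then comes down to the delicate point of matching the rate at which $\widetilde a_\theta$ and $\widetilde a_\theta^S$ vanish at $\theta_0$; this is precisely where compactness, $\Gamma_p(T)=\emptyset$ and $w_\rho(T)=1$ must be exploited, through the boundary behaviour near $\theta_0$ of the nonnegative harmonic functions $z\mapsto\langle K_z^\rho(\cdot)x,x\rangle$ with $x$ in the common null space, the coincidence of the null spaces $N_z$ for all $z\in\mathbb T$ being the lever that ties the two rates together. Once the uniformity is in hand, covering $\mathbb T$ by finitely many such neighbourhoods produces the constant $c$ of Step 1 and finishes the proof.
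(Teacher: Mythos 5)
Your forward implication and your Step 1 (extending a two‑sided comparison from $\mathbb{T}$ to $\mathbb{D}$ by harmonicity of $z\mapsto\langle K_z^\rho(\cdot)x,x\rangle$ and then invoking Theorem \ref{harnack}) coincide with the paper's argument, and your Step 2 is the pointwise version of the paper's inequalities $m(T)Q(z)\le\tilde{K}_z^\rho(T)\le M(T)Q(z)$. But your proof stops exactly where the theorem still has to be proved: you never establish $\inf_\theta\delta_\theta>0$; you only announce a local Schur‑complement strategy whose crux (``matching the rate at which $\widetilde a_\theta$ and $\widetilde a_\theta^S$ vanish at $\theta_0$'') is left entirely open, together with the admission that this is precisely where all the hypotheses must enter. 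This is a genuine gap, not a routine verification. Indeed, your pointwise constant $\|K_{e^{i\theta}}^\rho(S)\|/\delta_\theta$ need not be bounded: $\delta_\theta$, the bottom of the spectrum of $K_{e^{i\theta}}^\rho(T)$ restricted to $N_\theta^\perp$, can tend to $0$ as $\theta$ approaches a point where $\dim N_\theta$ jumps up, so no covering argument applied to these pointwise constants can close the proof as stated; the comparison really does have to be produced by a different mechanism.

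The paper obtains the uniform constant globally rather than locally. Lemma \ref{lem:2.5} gives, for the decreasingly ordered eigenvalues $\lambda_n^T(z)$ of $K_z^\rho(T)$, the bound $\vert\lambda_n^T(z)-\lambda_n^T(z')\vert\le\Vert K_z^\rho(T)-K_{z'}^\rho(T)\Vert$, which is Lipschitz \emph{uniformly in $n$}; hence the decreasing limit $\lambda^T(z)=\inf_{n\ge1}\lambda_n^T(z)$ is continuous on $\mathbb{T}$. Its strict positivity at each $z_0$ is proved by writing $K_{z_0}^\rho(T)=\rho I+R_{z_0}^\rho(T)$ with $R_{z_0}^\rho(T)$ compact (here $r(T)<1$ is used) and extracting a weakly convergent subsequence from a minimizing sequence of unit vectors in $E_T(z_0)^\perp$, reaching a contradiction whether the weak limit is zero or not. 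Continuity plus positivity on the compact circle then yield $m(T)=\min_{\mathbb{T}}\lambda^T>0$, and the two‑sided bound $\frac{m(S)}{M(T)}K_z^\rho(T)\le K_z^\rho(S)\le\frac{M(S)}{m(T)}K_z^\rho(T)$ on $\mathbb{T}$ follows immediately, with no rate‑matching at individual degeneracy points. You correctly located the hard step, but to have a proof you must either reproduce this eigenvalue‑function argument or actually carry out the uniformity analysis you sketch; as written, the converse direction is not established.
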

For the proof of this theorem we need the following lemma.
\begin{lemma}\label{lem:2.5} Let  $T \in C_{\rho}(H)\cap \mathcal{K}(H)$ as in the previous theorem . If $\Vert K_{z}^{\rho}(T)\Vert=\lambda_{1}(z)\geq \lambda_{2}(z)\geq \ldots \geq \lambda_{n}(z)\geq \ldots$ are the  eigenvalues of $K_{z}^{\rho}(T)$, arranged in decreasing order, then the mapping $z \mapsto \lambda_{n}(z)$ is continuous on $\overline{\mathbb{D}}$, for all $n$.
\end{lemma}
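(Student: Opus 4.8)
The key point is that the map $z \mapsto K_z^\rho(T)$ is continuous (indeed norm-continuous, even analytic-type) from $\overline{\mathbb D}$ into $\mathcal K(H)$ once we know it extends continuously past $\mathbb D$, and that the ordered eigenvalues of a compact self-adjoint operator depend continuously on the operator. The first ingredient requires justifying that $K_z^\rho(T)$ makes sense and is norm-continuous on all of $\overline{\mathbb D}$, not merely on $\mathbb D$. This is exactly where the hypotheses $T \in \mathcal K(H)$, $w_\rho(T)=1$ and $\Gamma_p(T)=\emptyset$ enter: since $T$ is compact, $\sigma(T)\setminus\{0\}$ consists of eigenvalues, so $\Gamma_p(T)=\emptyset$ forces $\Gamma(T)=\sigma(T)\cap\mathbb T=\emptyset$; hence $(I-\overline z T)$ and $(I-zT^*)$ are invertible for every $z\in\overline{\mathbb D}$, and by compactness of the circle $\sup_{|z|\le 1}\Vert (I-\overline z T)^{-1}\Vert<\infty$. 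Therefore $z\mapsto (I-\overline zT)^{-1}=I+\overline zT(I-\overline zT)^{-1}$ is a norm-continuous map of $\overline{\mathbb D}$ into $I+\mathcal K(H)$ (continuity following from the resolvent identity and the uniform bound), and likewise for $(I-zT^*)^{-1}$. Consequently $z\mapsto K_z^\rho(T)$ is norm-continuous on $\overline{\mathbb D}$ with values in $(\rho I)+\mathcal K(H)$, and it is a positive operator on $\mathbb D$, hence by continuity on all of $\overline{\mathbb D}$.

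Next I invoke the continuity of ordered eigenvalues. Writing $K_z^\rho(T)=\rho I + C(z)$ with $C(z)\in\mathcal K(H)$ self-adjoint and $C(z)\ge -\rho I$ (actually $C(z)\ge 0$ on $\mathbb D$ and $\ge (2-\rho)I-\rho I$ generally; the precise lower bound is irrelevant), the positive eigenvalues $\lambda_n(z)$ of $K_z^\rho(T)$ above $\rho$... more cleanly: $K_z^\rho(T)$ itself is a compact perturbation of $\rho I$, so its spectrum is $\{\rho\}$ together with a sequence of eigenvalues of finite multiplicity accumulating only at $\rho$. By Weyl's monotonicity / the Courant–Fischer min-max characterization, for each fixed $n$ the $n$-th eigenvalue (counted with multiplicity, in decreasing order, of $K_z^\rho(T)$, padding with the value $\rho$ if fewer than $n$ eigenvalues exceed $\rho$) satisfies
\[
|\lambda_n(z)-\lambda_n(z')|\le \Vert K_z^\rho(T)-K_{z'}^\rho(T)\Vert
\]
for all $z,z'\in\overline{\mathbb D}$. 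This is the standard Lipschitz estimate for min-max values under self-adjoint perturbation: $\lambda_n(A)=\min_{\dim L=n-1}\max_{x\perp L,\Vert x\Vert=1}\langle Ax,x\rangle$, and $|\langle Ax,x\rangle-\langle Bx,x\rangle|\le\Vert A-B\Vert$ uniformly in unit vectors $x$. Combining with the norm-continuity established in the first paragraph gives continuity of $z\mapsto\lambda_n(z)$ on $\overline{\mathbb D}$.

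The main obstacle — and the only place genuine care is needed — is the first paragraph: one must be sure that $K_z^\rho(T)$ truly extends continuously to the closed disc, i.e. that there is no blow-up as $|z|\to 1$. The hypotheses are tailored precisely for this: compactness of $T$ turns the spectral condition $\Gamma_p(T)=\emptyset$ into $\sigma(T)\cap\mathbb T=\emptyset$, which keeps $I-\overline zT$ boundedly invertible up to the boundary, and then compactness of $\overline{\mathbb D}$ supplies the uniform bound on the resolvents needed both for the continuity of $z\mapsto(I-\overline zT)^{-1}$ via the resolvent identity
\[
(I-\overline zT)^{-1}-(I-\overline{z'}T)^{-1}=(\overline z-\overline{z'})(I-\overline zT)^{-1}T(I-\overline{z'}T)^{-1}
\]
and for the finiteness of $\Vert K_z^\rho(T)\Vert=\lambda_1(z)$. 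A secondary bookkeeping point is the convention when $K_z^\rho(T)$ has only finitely many eigenvalues larger than $\rho$: one defines $\lambda_n(z)$ by min-max (equivalently, lists the eigenvalues of $K_z^\rho(T)-\rho I$ in decreasing order of modulus among the positive ones, then $\rho$ repeated) so that the Lipschitz bound above holds with no exceptional cases; with this convention the statement and its proof are uniform in $n$.
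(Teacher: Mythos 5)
Your argument is correct and follows essentially the same route as the paper: both reduce the claim to the Lipschitz estimate $\vert\lambda_{n}(z)-\lambda_{n}(z')\vert\leq \Vert K_{z}^{\rho}(T)-K_{z'}^{\rho}(T)\Vert$, the paper via the characterization of $\lambda_n$ as the distance to operators of rank $<n$ and you via the equivalent Courant--Fischer min--max formula. The only difference is that you also spell out why $z\mapsto K_{z}^{\rho}(T)$ is norm-continuous up to the boundary (using $\Gamma(T)=\emptyset$ and the resolvent identity), a point the paper's proof leaves implicit.
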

\begin{proof}Let $R$ such that $rang(R)<n$. We have
$$\lambda_{n}(z)\leq \Vert K_{z}^{\rho}(T)-R\Vert \leq \Vert K_{z}^{\rho}(T)-K_{z'}^{\rho}(T\Vert+ \Vert K_{z'}^{\rho}(T)-R\Vert.$$
Hence
$$\lambda_{n}(z)\leq   \Vert K_{z}^{\rho}(T)-K_{z'}^{\rho}(T)\Vert+\lambda_{n}(z').$$
By interchanging $z$ by $z'$, we get
\begin{equation}\label{lambdan}
\vert\lambda_{n}(z)-\lambda_{n}(z')\vert\leq   \Vert K_{z}^{\rho}(T)-K_{z'}^{\rho}(T\Vert.
\end{equation}
\end{proof}
\begin{proof}[Proof of Theorem \ref{thm:hpc1}] Let $T,S \in C_{\rho}(H)\cap \mathcal{K}(H)$ such that   $T{\stackrel{H}{\sim}} S$. Since $\Gamma_p (T)$ is empty, by Corollary \ref{harnackequ}, the operators $T$ and $S$ not admits eigenvalues in $\mathbb{T}$. Hence, $K_{z}^{\rho} (T)$ and  $ K_{z}^{\rho}(S)$ are uniformly bounded in $\mathbb{D}$ and may be extended to a positive operators on  $\overline{\mathbb{D}}$. Furthermore, if we proceed as in the proof of   Proposition \ref{prop2.3}, we deduce that  $Ker K_{z}^{\rho} (T)=Ker K_{z}^{\rho} (S)$ for all $z\in \mathbb{T}$.

Conversely, Let $E_{T}(z)=ker (K_{z}^{\rho} (T))$ and $E_{S}(z)=ker (K_{z}^{\rho} (S))$, both $K_{z}^{\rho} (T)$ and $K_{z}^{\rho} (S)$ on $H = E_{T}(z)\oplus E_{T}(z)^{\bot}$, take the following forms
 $$K_{z}^{\rho}(T)= \begin{pmatrix}
 0 & 0 
\\
0 & \tilde{K}_{z}^{\rho}(T)
\end{pmatrix} \quad \quad \text{ and } \quad \quad K_{z}^{\rho}(S)= \begin{pmatrix}
0 & 0 
\\
0 & \tilde{K}_{z}^{\rho}(S)
\end{pmatrix}.$$
Denote by  $\lambda_{1}^{T}(z), \lambda_{2}^{T}(z), \ldots, \lambda_{n}^{T}(z), \ldots$ are the  eigenvalues of $K_{z}^{\rho}(T)$, arranged in decreasing order $\lambda_{1}^{T}(z)=\Vert K_{z}^{\rho}(T)\Vert\geq \lambda_{2}^{T}(z)\geq \ldots \geq \lambda_{n}^{T}(z)\geq \ldots$.
We put $\lambda^{T}(z)=\inf_{n\geq 1}\lambda_{n}^{T}(z)$. We claim that $\lambda^{T}(z) >0$. Indeed, if we assume that there exist $z_0$ such that $\lambda^{T}(z_0)=0$, then there exists a sequence $(x_n)_n$ in $E_{T}(z_0)^{\bot}$ with $\Vert x_n \Vert=1$ such that 
$$\lambda_{n}^{T}(z)=\left\langle K_{z_{0}}^{\rho}(T) x_n , x_n \right\rangle = \left\langle \tilde{K}_{z_{0}}^{\rho}(T) x_n , x_n \right\rangle = \rho +\left\langle R_{z_{0}}^{\rho}(T)x_n , x_n \right\rangle,$$

with $R_{z_{0}}^{\rho} (T)=\sum^{+\infty}_{n=1} \overline{z_{0}}^n T^n + \sum^{+\infty}_{n=1} z_{0}^{n} T^{*n}$. Since $T$ is compact operator with $r(T)<1$, both of the series  $\sum^{+\infty}_{n=1} \overline{z_{0}}^n T^n $ and  $\sum^{+\infty}_{n=1} z_{0}^{n} T^{*n}$ are converge to a compact operator in the operator norm, so $R_{z_{0}}^{\rho} (T)$ is compact. There exist a subsequence $(x_{j(n)})$ of $(x_n)_n$ such that $(x_{j(n)})$ converges to some $x\in E_{T}(z_0)^{\bot}$ in the weak star topology, this implies that $R_{z_{0}}^{\rho}(T)x_{j(n)} \longrightarrow R_{z_{0}}^{\rho}(T)x$  strongly and 
$$0=\rho+\left\langle R_{z_{0}}^{\rho}(T)x , x \right\rangle\geq \Vert x \Vert^2+\left\langle R_{z_{0}}^{\rho}(T)x , x \right\rangle=\left\langle K_{z_{0}}^{\rho}(T)x , x \right\rangle\geq 0,$$
so $x\in E_{T}(z_0)$. If $x=0$, then  $R_{z_{0}}^{\rho}(T)x_k \longrightarrow 0$  strongly and $\lambda_{k}^{T}(z_{0}) \rightarrow 1$, which is a contradiction  with $\lambda_{k}^{T}(z_{0}) \rightarrow 0$. Then $x\neq 0$ but $x\in E_{T}(z_{0})\cap E(z_{0})^{\bot}$ this is again a contradiction and $\lambda^{T}(z_0)$ must be strictly positive. On the other hand $(\lambda_{n}^{T}(z))_n$ is a decreasing bounded below  sequence so it converge to $\lambda^{T}(z)$, furthermore, since, by Lemma  \ref{lem:2.5}, the mapping $z\mapsto \lambda_{n}^{T}(z)$  is continuous, then by letting $n$ to $+\infty$ in \eqref{lambdan}, we deduce that the mapping $z\mapsto \lambda^{T}(z)$ is also continuous and has a minimum in $\mathbb{T}$ denoted by  $m(T) =\inf_{\lambda \in \mathbb{T}}\lambda^{T}(z)>0$. The same arguments holds for the compact operator  $S$. 
 
Let $P(z)$ denote the orthogonal projection on $E_{T}(z)=E_{S}(z)$ and $Q(z)=I-P(z)$. We put $M(T)=\sup_{\lambda \in \mathbb{T}}\Vert K_{z}^{\rho}(T) \Vert$, for all $z\in \mathbb{T}$, we  also have 
  $$
  m(T)Q(z)\leq \tilde{K}_{z}^{\rho}(T)\leq  M(T) Q(z)  
  $$
  and
   $$
  m(S)Q(z)\leq \tilde{K}_{z}^{\rho}(S)\leq  M(s)Q(z).
  $$
  This two inequalities gives 
  $$ \frac{m(S)}{M(T)} \tilde{K}_{z}^{\rho}(T)\leq \tilde{K}_{z}^{\rho}(S)\leq \frac{M(S)}{ m(T)} \tilde{K}_{z}^{\rho}(T).$$
  Hence
   $$ \frac{m(S)}{M(T)} K_{z}^{\rho}(T)\leq K_{z}^{\rho}(S)\leq \frac{M(S)}{ m(T)} K_{z}^{\rho}(T),$$
	for all  $z\in \mathbb{T}$. Now, by the uniqueness of harmonic extension we also have 
   $$ \frac{m(S)}{M(T)} K_{z}^{\rho}(T)\leq K_{z}^{\rho}(S)\leq \frac{M(S)}{ m(T)} K_{z}^{\rho}(T),$$
	for all  $z\in \mathbb{D}$. This complete the proof of the theorem.
\end{proof} 
\begin{remark} In the previous theorem the hypothesis $\Gamma_p (T)$ is empty can be relaxed. In this case we can use the Corollary \ref{coro:he} and we  applied the Theorem \ref{thm:hpc1} for  $\tilde{T}$ and  $\tilde{S}$ as in the decomposition of $T$ and $S$ respectively, given by the Corollary \ref{coro:he}. 
\end{remark}
\begin{corollary}\label{cor:hpc12}Let $T \in C_{\rho}(H)\cap \mathcal{K}(H)$ with $w_{\rho}(T)=1$  and  $\Gamma_p (T)$ is empty. If there exits an unitary operator $U$ such that $U(ker (K_{z}^{\rho} (T)))\subseteq ker (K_{z}^{\rho} (T))$ for all $z\in \mathbb{T}$, then $U^*TU$ is Harnack equivalent to $T$.
\end{corollary}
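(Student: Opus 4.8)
The plan is to derive the conclusion from Theorem~\ref{thm:hpc1} applied to the pair $T$ and $S:=U^{*}TU$. First I would check that Theorem~\ref{thm:hpc1} is applicable with $S$ in the role of the second operator, i.e.\ that $S\in C_{\rho}(H)\cap\mathcal{K}(H)$: the operator $S=U^{*}TU$ is unitarily equivalent to $T$, hence compact, and since the classes $C_{\rho}$ — equivalently the functionals $w_{\rho}$ — are invariant under unitary equivalence, $S\in C_{\rho}(H)$. Granting this, by Theorem~\ref{thm:hpc1} it then suffices to prove that $\ker K_{z}^{\rho}(S)=\ker K_{z}^{\rho}(T)$ for every $z\in\mathbb{T}$.

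Next I would exploit the elementary intertwining identity $K_{z}^{\rho}(U^{*}TU)=U^{*}K_{z}^{\rho}(T)U$, valid for every $z$ for which the kernels are defined (in particular for $z\in\mathbb{T}$, since $r(T)<1$ forces $r(U^{*}TU)<1$): indeed $(I-\overline{z}\,U^{*}TU)^{-1}=U^{*}(I-\overline{z}T)^{-1}U$, the analogous identity holds for the adjoint term, and the scalar term $(\rho-2)I$ is unchanged. It follows that
\[ \ker K_{z}^{\rho}(S)=U^{*}\bigl(\ker K_{z}^{\rho}(T)\bigr)=U^{-1}\bigl(\ker K_{z}^{\rho}(T)\bigr),\qquad z\in\mathbb{T}. \]
Applying $U^{-1}$ to the hypothesis $U\bigl(\ker K_{z}^{\rho}(T)\bigr)\subseteq\ker K_{z}^{\rho}(T)$ gives $\ker K_{z}^{\rho}(T)\subseteq U^{-1}\bigl(\ker K_{z}^{\rho}(T)\bigr)=\ker K_{z}^{\rho}(S)$, which is one of the two inclusions we need.

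The step I expect to be the main obstacle is upgrading this inclusion to an equality, equivalently showing $U\bigl(\ker K_{z}^{\rho}(T)\bigr)=\ker K_{z}^{\rho}(T)$ rather than just a one-sided inclusion — note that for a unitary operator a proper inclusion $UE\subsetneq E$ is perfectly possible in infinite dimensions. Here I would use that $\ker K_{z}^{\rho}(T)$ is \emph{finite-dimensional}: since $\Gamma_{p}(T)=\emptyset$ and $T$ is compact we have $r(T)<1$, so for $z\in\mathbb{T}$ the series $(I-\overline{z}T)^{-1}=I+\sum_{n\geq1}\overline{z}^{\,n}T^{n}$ and its adjoint analogue converge in operator norm, whence $K_{z}^{\rho}(T)-\rho I$ is compact. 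Thus $K_{z}^{\rho}(T)$ is a compact perturbation of the invertible operator $\rho I$ (recall $\rho\geq1>0$), its eigenvalue $0$ has finite multiplicity, and $\ker K_{z}^{\rho}(T)$ is finite-dimensional. A unitary operator is injective, so its restriction to the finite-dimensional subspace $\ker K_{z}^{\rho}(T)$, which it maps into itself by hypothesis, is automatically surjective onto it; hence $U\bigl(\ker K_{z}^{\rho}(T)\bigr)=\ker K_{z}^{\rho}(T)$, and therefore $\ker K_{z}^{\rho}(S)=\ker K_{z}^{\rho}(T)$ for all $z\in\mathbb{T}$.

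Combining this with Theorem~\ref{thm:hpc1} applied to $T$ and $S=U^{*}TU$ yields that $U^{*}TU$ is Harnack equivalent to $T$, which is the assertion. (As in the remark following Theorem~\ref{thm:hpc1}, the hypothesis $\Gamma_{p}(T)=\emptyset$ could afterwards be dropped by first splitting off the reducing subspace $\oplus_{\lambda\in\Gamma_{p}(T)}\ker(T-\lambda I)$ via Corollary~\ref{coro:he}.)
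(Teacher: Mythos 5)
Your proposal is correct and follows essentially the same route as the paper: both use the intertwining identity $K_{z}^{\rho}(U^{*}TU)=U^{*}K_{z}^{\rho}(T)U$ to identify $\ker K_{z}^{\rho}(U^{*}TU)$ with $U^{*}\bigl(\ker K_{z}^{\rho}(T)\bigr)$, then invoke the finite-dimensionality of $\ker K_{z}^{\rho}(T)$ (coming from $K_{z}^{\rho}(T)=\rho I+R_{z}^{\rho}(T)$ with $R_{z}^{\rho}(T)$ compact) to upgrade the hypothesized inclusion $U\bigl(\ker K_{z}^{\rho}(T)\bigr)\subseteq\ker K_{z}^{\rho}(T)$ to an equality, and conclude by Theorem~\ref{thm:hpc1}. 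Your write-up is in fact slightly more careful than the paper's, making explicit why $r(T)<1$ and why the kernels extend to $\mathbb{T}$.
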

\begin{proof}
We have $K_{z}^{\rho} (U^*TU)=U^*K_{z}^{\rho}U$ and $Ker (K_{z}^{\rho} (U^*TU))=Ker(K_{z}^{\rho}U)=U^*Ker(K_{z}^{\rho})$. As we see in the proof the preceding theorem that $K_{z}^{\rho}(T)=\rho I+ R_{z}^{\rho}(T)$ with $ R_{z}^{\rho}(T)$ is compact. Hence $Ker(K_{z}^{\rho})$ is finite dimensional, so the restriction of $U$ to $Ker(K_{z}^{\rho})$ is injective, equivalently to the restriction of $U^*$ to $Ker(K_{z}^{\rho})$ is surjective. Thus $U^*Ker(K_{z}^{\rho})=Ker(K_{z}^{\rho}(T))$ and  $Ker (K_{z}^{\rho} (U^*TU))= Ker(K_{z}^{\rho}(T))$  for all $z\in \mathbb{T}$. 
By  Theorem  \ref{thm:hpc1} we conclude that $T{\stackrel{H}{\sim}} S$. 
\end{proof}

\begin{corollary}\label{cor:hpc1} Let $T \in C_{1}(H)\cap \mathcal{K}(H)$ with $\left\|T\right\|=1$ and  $\Gamma_p (T)$ is empty. Then  $S \in C_{1}(H)\cap \mathcal{K}(H)$ is Harnack equivalent to $T$ if and only if $E=ker (I-T^*T)=ker (I-S^*S)$ and $T_{|E}=S_{|E}$.
\end{corollary}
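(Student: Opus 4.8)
The plan is to derive Corollary \ref{cor:hpc1} from Theorem \ref{thm:hpc1} by translating, in the case $\rho=1$, the condition ``$\ker K_z^1(T)=\ker K_z^1(S)$ for all $z\in\mathbb{T}$'' into the stated condition on $I-T^*T$ and on the restrictions to $E$. First I would recall that for $\rho=1$ and $z=e^{i\theta}\in\mathbb{T}$ one has the identity
$$K_{e^{i\theta}}^{1}(T)=(I-e^{-i\theta}T)^{-1}\bigl(I-T^*T\bigr)(I-e^{i\theta}T^*)^{-1},$$
which follows by expanding $(I-e^{-i\theta}T)^{-1}+(I-e^{i\theta}T^*)^{-1}-I$ and factoring, exactly as in the factorisations already used in the proofs of Theorem \ref{spectral+torus} and Lemma \ref{lem:2.4}. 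Because $(I-e^{i\theta}T^*)^{-1}$ is a boundedly invertible operator, $\ker K_{e^{i\theta}}^1(T)=(I-e^{i\theta}T^*)\,\ker(I-T^*T)$; more usefully, a vector $x$ lies in $\ker K_{e^{i\theta}}^1(T)$ iff $(I-e^{i\theta}T^*)^{-1}x\in\ker(I-T^*T)$. I would also note that $\ker(I-T^*T)=\{x:\|Tx\|=\|x\|\}=E$ is exactly the set on which $T$ acts isometrically, so $E$ is invariant under $T$ and $T_{|E}$ is an isometry of $E$ (this is the standard ``canonical decomposition'' fact, and the same for $S$).

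Next I would prove the ``only if'' direction. Assume $S\stackrel{H}{\sim}T$. Since $\|T\|=1$ and $\Gamma_p(T)$ is empty, Theorem \ref{thm:hpc1} gives $\ker K_z^1(S)=\ker K_z^1(T)$ for all $z\in\mathbb{T}$. Taking $z\to 1$ through the real axis inside $\mathbb{D}$ is not needed; instead, evaluating at a single point, say using $z=0$ is degenerate, so I would rather argue uniformly: from the factorisation above, equality of $\ker K_{e^{i\theta}}^1(T)$ and $\ker K_{e^{i\theta}}^1(S)$ for a fixed $\theta$ and the invertibility of the outer factors does not immediately give $\ker(I-T^*T)=\ker(I-S^*S)$ because the outer factors differ for $T$ and $S$. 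The cleaner route is: by Proposition \ref{prop2.3} (applied with $\rho=1$) we already have $\ker K_z^1(S)=\ker K_z^1(T)$ for \emph{all} $z\in\mathbb{D}$, in particular differentiating or letting $z\to 0$. Actually the slickest step is to use $z=0$: $K_0^1(T)=I+I-I=I$ has trivial kernel, unhelpful; so instead I would use that $\ker K_z^1(T)\subseteq\ker(I-T^*T)$ translated via $x\mapsto(I-e^{i\theta}T^*)x$. Concretely: $x\in\ker(I-T^*T)\iff (I-e^{i\theta}T^*)^{-1}\bigl[(I-e^{i\theta}T^*)x\bigr]\in\ker(I-T^*T)$, i.e. $(I-e^{i\theta}T^*)x\in\ker K_{e^{i\theta}}^1(T)=\ker K_{e^{i\theta}}^1(S)$, which by the $S$-factorisation means $(I-e^{i\theta}S^*)^{-1}(I-e^{i\theta}T^*)x\in\ker(I-S^*S)$. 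To get $\ker(I-T^*T)=\ker(I-S^*S)$ I would pick $\theta=0$ twice: $x\in\ker(I-T^*T)$ iff $(I-T^*)x\in\ker K_1^1(T)$; but $\ker K_1^1(T)$ only makes sense after extension, so I would stay with a generic $e^{i\theta}$ and additionally exploit Corollary \ref{harnackequspp}. Since $\|Tx\|=\|x\|$ for $x\in E$ and $T_{|E}$ is an isometry on a finite-dimensional or at least complete space, $\Gamma_p(T)$ empty forces $E$ to contain no eigenvector, but an isometry on $E$ need not have eigenvalues, so no contradiction; rather, I record that $E$ reduces neither necessarily. The honest key step: use the two-sided Harnack inequality $\alpha K_z^1(S)\le K_z^1(T)\le\beta K_z^1(S)$ (Theorem \ref{harnack}(iii)) and sandwich, as in Proposition \ref{prop2.3}, to conclude directly that $\ker(I-T^*T)=\ker(I-S^*S)=E$ by letting $z\to 1$: multiply by the nonnegative weight and integrate exactly as in the proof of Theorem \ref{thm:numr}(i) to extract $\alpha(I-S^*S)\le I-T^*T\le\beta(I-S^*S)$, whence the kernels coincide.

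For the coincidence $T_{|E}=S_{|E}$ I would use condition (v) of Theorem \ref{harnack}: there is $S_0\in B(K_0,K_1)$ with $S_0|_H=I$, $\|S_0\|\le c$ and $S_0V_0=V_1S_0$, and symmetrically one going the other way; restricting the intertwining to the ``isometric part'', i.e. to vectors $x\in E$ where $T$ (hence its minimal isometric dilation restricted to the appropriate wandering-free part) acts like $S$, forces $Tx=Sx$. Alternatively, and more elementarily, for $x\in E$ we have $\|Tx\|=\|Sx\|=\|x\|$ and from $\alpha(I-S^*S)\le I-T^*T$ with both sides vanishing on $E$ we get that $E$ is invariant for both; then comparing $K_z^1(T)$ and $K_z^1(S)$ restricted to $E$, where they become the \emph{same} scalar-type kernel built from the isometries $T_{|E}$ and $S_{|E}$, and using that two isometries with equal $\rho$-kernels on $\mathbb{T}$ coincide (a vector $x\in E$ satisfies $\langle K_z^1(T_{|E})x,x\rangle=\langle K_z^1(S_{|E})x,x\rangle$, and expanding the power series in $z$ and $\bar z$ gives $\langle T^nx,x\rangle=\langle S^nx,x\rangle$ for all $n$, then polarise to get $T_{|E}=S_{|E}$ since on $E$ these are isometries so the matrix coefficients determine the operator). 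For the ``if'' direction, assume $E:=\ker(I-T^*T)=\ker(I-S^*S)$ and $T_{|E}=S_{|E}$; then $E$ reduces nothing automatically, but on $E$ both kernels agree, and on $E^\perp$ the compact operators $I-T^*T$ and $I-S^*S$ are bounded below by a positive constant (since $\ker$ is exactly $E$ and the operator is compact, so its nonzero spectrum is bounded away from $0$), giving $m_1 Q\le I-T^*T\le M_1 Q$ and similarly for $S$ with the same projection $Q=I-P_E$; sandwiching yields $\alpha(I-S^*S)\le I-T^*T\le\beta(I-S^*S)$, hence after conjugating by the invertible resolvents, $\alpha' K_z^1(S)\le K_z^1(T)\le\beta' K_z^1(S)$ on $\mathbb{T}$, and by uniqueness of harmonic extension on $\mathbb{D}$, so $T\stackrel{H}{\sim}S$ by Theorem \ref{harnack}. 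The main obstacle I anticipate is handling the outer resolvent factors: equality of kernels of $K_z^1$ does not trivially transfer to equality of kernels of $I-T^*T$ because the conjugating factors for $T$ and $S$ are different, so the clean fix is to pass through the two-sided operator inequality and integrate against the Poisson-type weight (as in Theorem \ref{thm:numr}(i)) rather than manipulating kernels of a single $K_z$; the second delicate point is justifying $T_{|E}=S_{|E}$ from kernel/inequality data alone, which I handle via the moment argument on $E$ using that $T_{|E},S_{|E}$ are isometries.
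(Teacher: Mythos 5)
Your reduction to Theorem \ref{thm:hpc1} and the factorisation $K_{z}^{1}(T)=(I-zT^{*})^{-1}(I-T^{*}T)(I-\overline{z}T)^{-1}$ are the right starting points, and you correctly identify the central obstacle (the conjugating resolvents differ for $T$ and $S$), but none of the ways you propose to get around it actually works. First, $E=\ker(I-T^{*}T)=\{x:\Vert Tx\Vert=\Vert x\Vert\}$ is in general \emph{not} invariant under $T$: for $T=J_{2}$ one has $E=\Span\{e_{2}\}$ and $Te_{2}=e_{1}\notin E$. So ``$T_{|E}$ is an isometry of $E$'' is false, and the moment/polarisation argument you build on it (equality of the kernels ``restricted to $E$'', $\langle T^{n}x,x\rangle=\langle S^{n}x,x\rangle$, etc.) collapses; note also that $K_{z}^{1}(T)$ vanishes on $(I-\overline{z}T)E$, not on $E$, so there is no reason the two quadratic forms should agree on $E$. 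Second, integrating $K_{re^{i\theta}}^{1}(T)$ against a scalar weight $g\geq 0$ can only produce operators of the form $\hat{g}(0)I+\sum_{n\geq1}r^{n}\bigl(\hat{g}(n)T^{n}+\hat{g}(-n)T^{*n}\bigr)$; no choice of $g$ yields $I-T^{*}T$, so the claimed extraction of $\alpha(I-S^{*}S)\leq I-T^{*}T\leq\beta(I-S^{*}S)$ from the Harnack inequalities is unjustified. Third, in the converse you pass from $\alpha(I-S^{*}S)\leq I-T^{*}T\leq\beta(I-S^{*}S)$ to $\alpha'K_{z}^{1}(S)\leq K_{z}^{1}(T)\leq\beta'K_{z}^{1}(S)$ ``by conjugating by the invertible resolvents''; this is precisely the illegitimate transfer you flagged in the forward direction (it would require $ABA^{*}\leq\Vert A\Vert^{2}B$ for positive $B$, which is false). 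Moreover your converse never uses the hypothesis $T_{|E}=S_{|E}$, and it cannot be dispensed with: $T=J_{2}$ and $S=-J_{2}$ have the same $\ker(I-T^{*}T)$ but are not Harnack equivalent.

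The missing idea, which is how the paper proceeds, is to exploit analyticity rather than operator inequalities. From the factorisation, $\ker K_{z}^{1}(T)=(I-\overline{z}T)\ker(I-T^{*}T)$ for $z\in\mathbb{T}$ (legitimate here because $T$ is compact with $\Gamma_{p}(T)=\emptyset$, hence $r(T)<1$), so Theorem \ref{thm:hpc1} gives $(I-\overline{z}S)\ker(I-S^{*}S)=(I-\overline{z}T)E$ on all of $\mathbb{T}$. For $x\in\ker(I-S^{*}S)$ the function $y(z)=(I-zT)^{-1}(I-zS)x$ is analytic on a neighbourhood of $\overline{\mathbb{D}}$ and takes values in the closed subspace $E$ on $\mathbb{T}$; the mean value property then gives $x=y(0)=\int_{0}^{2\pi}y(e^{i\theta})\,dm(\theta)\in E$, whence $\ker(I-S^{*}S)=E$ by symmetry. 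Next, for $x\in E$ the Taylor coefficients $T^{n-1}(T-S)x$ of $y$ are Fourier coefficients of the $E$-valued boundary function, hence lie in $E$; therefore $T$ acts isometrically on each of them, $\Vert(T-S)x\Vert=\Vert T^{n}(T-S)x\Vert\rightarrow 0$ because $r(T)<1$, and $T_{|E}=S_{|E}$. The converse is then immediate without any operator inequality: the two hypotheses give $\ker K_{z}^{1}(T)=(I-\overline{z}T)E=(I-\overline{z}S)E=\ker K_{z}^{1}(S)$ for all $z\in\mathbb{T}$, and Theorem \ref{thm:hpc1} applies.
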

\begin{proof} Let $T,S \in C_{1}(H)\cap \mathcal{K}(H)$ such that   $T{\stackrel{H}{\sim}} S$. 
By  Theorem  \ref{thm:hpc1}, $Ker K_{z} (T)=Ker K_{z} (S)$ for all $z\in \mathbb{T}$. 
On the other hand, the fact that $$K_{z} (T)= (I-zT^{*})^{-1}[  I- \left|z\right|^{2} T^{*}T] (I-\overline{z}T)^{-1},$$ we can easily deduce that 
$$kerK_{z} (T)=(I-\overline{z}T)(ker (I-  T^{*}T)) \quad \text{ for all } z \in \mathbb{T},$$
and similarly
 $$kerK_{z} (S)=(I-\overline{z}S)(ker (I-  S^{*}S)) \quad \text{ for all } z \in \mathbb{T},$$
Then we also  have,
$$(I- \overline{z}T)(ker (I-  T^{*}T))=(I- \overline{z}S)(ker (I-  S^{*}S))\quad \text{ for all } z \in \mathbb{T}.$$

We put  $E=ker (I-  T^{*}T)$. Let $x\in ker (I-  S^{*}S)$ and $z \in \mathbb{T}.$ Then
$(I-zS)x=(I-zT)y(z)$ with $y(z)\in E$, hence $y(z)=(I-zT)^{-1}(I-zS)x$ have an analytic extensions in a neighbourhood of $\overline{\mathbb{D}}$. It follows that $x=y(0)=\int_{0}^{2\pi}y(e^{i\theta})dm(\theta)\in E$, since $E$ is closed. This proves  $ker (I-  S^{*}S)\subseteq E$. Now the equality holds by interchanging the roles of $T$ and $S.$  Furthermore, we can see that for all $x\in E$, we have
$$y(z)=(I-zT)^{-1}(I-zS)x\in E   \quad \text{ for all } z \in \overline{\mathbb{D}}.$$
But
\begin{align*}
y(z) & =(I-zT)^{-1}(I-zT+z(T-S))x\\
&=x+\sum^{+\infty}_{n=1}z^{n} T^{n-1} (T-S)x
\end{align*}
On the other hand, we have
$$ T^{n-1} (T-S)x=\int_{0}^{2\pi}e^{-i n\theta}y(e^{i \theta})dm(\theta)\in E  \quad \text{ for all } n\geq 1,$$
and
$$\left\langle (I-  T^{*}T) T^{n-1}x ,T^{n-1} (T-S)x\right\rangle=0\quad \text{ for all } n\geq 1.$$
thus
$$\Vert (I-  T^{*}T) T^{n-1}(T-S)x \Vert^2 -\Vert T^{n-1} (T-S)x\Vert^2=0  \quad \text{ for all } n\geq 1,$$
so
$$\Vert (T-S)x \Vert =\Vert T^{n} (T-S)x\Vert^2   \longrightarrow 0,$$
because $r(T)<1$. This implies that $Tx=Sx$ for all $x\in E$.

Conversely, if $E=ker (I-T^*T)=ker (I-S^*S)$ and $T_{|E}=S_{|E}$, then for all  $z\in \mathbb{T}$, we have
$$Ker K_{z} (T)=(I- zT)(ker (I-  T^{*}T))=(I- zS)(ker (I-  S^{*}S))=Ker K_{z} (S).$$
Thus, by Theorem  \ref{thm:hpc1}, $T{\stackrel{H}{\sim}} S$.
\end{proof} 
For each $n\geq 1$, let 
$$  J_n =\begin{pmatrix}
0&1&0&\ldots  &0\\
0&0&\ddots&\ddots  &\vdots\\
\vdots&\vdots &\ddots&\ddots  & 0\\
0&0&\ldots&\ddots & 1\\
0&0&\ldots&\ldots & 0
\end{pmatrix}
 $$
 denote the (nilpotent) Jordan block of size $n$. By Corollary \ref{cor:hpc1} and the fact that $ker (I-J_{n}^{*} J_{n})=span\{e_2, \ldots, e_n\}$,   the Harnack parts of $J_n$ is given by
 \begin{corollary}The Harnack parts of $J_n$ is precisely the  set of all matrices of $C_{2}(\mathbb{C}^{n})$ of  the  form 
$$  M =\begin{pmatrix}
0&1&0&\ldots  &0\\
0&0&\ddots&\ddots  &\vdots\\
\vdots&\vdots &\ddots&\ddots  & 0\\
0&0&\ldots&\ddots & 1\\
z&0&\ldots&\ldots & 0
\end{pmatrix},
$$
where $z$ is in the open unit disc.
\end{corollary}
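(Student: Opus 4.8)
The plan is to apply Corollary~\ref{cor:hpc1} with $T=J_n\in C_1(\mathbb{C}^n)\cap\mathcal{K}(\mathbb{C}^n)$. Its hypotheses are immediate: $J_n$ is a finite matrix, hence compact; $\|J_n\|=1$; and $\sigma(J_n)=\{0\}$, so $\Gamma_p(J_n)=\sigma_p(J_n)\cap\mathbb{T}=\emptyset$. Moreover $J_n^*J_n=\operatorname{diag}(0,1,\dots,1)$, so $I-J_n^*J_n$ is the orthogonal projection onto $\mathbb{C}e_1$ and $E:=\ker(I-J_n^*J_n)=\Span\{e_2,\dots,e_n\}$, as recorded before the statement. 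By Corollary~\ref{cor:hpc1}, a matrix $S\in C_1(\mathbb{C}^n)$ is Harnack equivalent to $J_n$ if and only if $\ker(I-S^*S)=E$ and $S|_E=J_n|_E$.

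The next step is to unwind these two conditions into the claimed matrix shape. Since $J_ne_k=e_{k-1}$ for $2\le k\le n$, the equality $S|_E=J_n|_E$ says exactly that the columns of $S$ satisfy $Se_k=e_{k-1}$ for $2\le k\le n$; hence $S$ is determined by its first column $c:=Se_1$ and has the shape of $M$ with first column $c$. The inclusion $E\subseteq\ker(I-S^*S)$ means $S^*Se_k=e_k$ for $2\le k\le n$; writing this out with $Se_k=e_{k-1}$, the components along $e_2,\dots,e_n$ are automatic, and the component along $e_1$ gives $\langle Se_k,Se_1\rangle=\langle e_{k-1},c\rangle=0$ for all $2\le k\le n$, i.e. $c\in\Span\{e_1,\dots,e_{n-1}\}^{\perp}=\mathbb{C}e_n$, say $c=ze_n$. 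Thus $S$ equals $M$ with bottom-left entry $z$; computing the Gram matrix of its columns yields $M^*M=\operatorname{diag}(|z|^2,1,\dots,1)$, so $I-M^*M=\operatorname{diag}(1-|z|^2,0,\dots,0)$.

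From this last identity everything falls out: $M$ is a contraction iff $|z|\le1$, and $\ker(I-M^*M)$ equals $E=\Span\{e_2,\dots,e_n\}$ precisely when $1-|z|^2\neq0$, i.e. $|z|<1$; when $|z|=1$ we have $M^*M=I$, so $\ker(I-M^*M)=\mathbb{C}^n\neq E$ and $M$ is not Harnack equivalent to $J_n$. Conversely, every matrix $M$ of the displayed form with $|z|<1$ is a compact contraction with $\ker(I-M^*M)=E$ and $M|_E=J_n|_E$ (its $k$-th column is $e_{k-1}$ for $k\ge2$), so Corollary~\ref{cor:hpc1} gives $M\stackrel{H}{\sim}J_n$. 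Combining the two directions yields the asserted description. The argument is essentially bookkeeping; the one point requiring a little care is that, once $E\subseteq\ker(I-S^*S)$ is known, the kernel has dimension $n-1$ or $n$, and one must rule out the second case — which is exactly the non-unitarity of $M$, forced by the strict inequality $|z|<1$.
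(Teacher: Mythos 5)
Your proof is correct and follows exactly the route the paper intends: the paper's entire justification is the citation of Corollary~\ref{cor:hpc1} together with the observation that $\ker(I-J_n^*J_n)=\Span\{e_2,\dots,e_n\}$, and you have simply supplied the (correct) bookkeeping that unwinds the two conditions of that corollary into the displayed matrix form and the constraint $|z|<1$.
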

In the case of compact operators, we deduce from Theorem \ref{thm:numr} the next
result.
\begin{proposition}
Let $T_0, T_1 \in C_{\rho}(H)\cap \mathcal{K}(H)$ with $1 \leq \rho
\leq 2$, then we have:
\begin{enumerate}
    \item [(i)] Assume that $\rho=1$ and $T_1{\stackrel{H}{\prec}} T_0$, then $W(T_0)\cap \mathbb{T} =
	W(T_1)\cap \mathbb{T}$.
	\item [(ii)] Suppose that $1 < \rho
\leq 2$, $T_1{\stackrel{H}{\prec}} T_0$ and $\Gamma(T_0)=\emptyset$, then $W(T_0)\cap \mathbb{T} \subseteq 
	W(T_1)\cap \mathbb{T}$.
	\item [(iii)] If $T_1{\stackrel{H}{\sim}} T_0$, then
	$W(T_0)\cap \mathbb{T} = 
	W(T_1)\cap \mathbb{T}$.
\end{enumerate}
\end{proposition}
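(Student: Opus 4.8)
The plan is to deduce all three assertions directly from Theorem~\ref{thm:numr}, once we know that for a \emph{compact} operator $T$ with $w(T)\le 1$ the trace of its numerical range on the circle is already closed, that is,
\[
W(T)\cap\mathbb{T}=\overline{W(T)}\cap\mathbb{T}.
\]
Indeed, granting this identity the argument is short: since $1\le\rho\le 2$ we have $T_i\in C_\rho(H)\subseteq C_2(H)$, hence $w(T_i)=w_2(T_i)\le 1$ for $i=0,1$; then one applies the matching item of Theorem~\ref{thm:numr} to obtain the inclusion (resp.\ equality) between $\overline{W(T_0)}\cap\mathbb{T}$ and $\overline{W(T_1)}\cap\mathbb{T}$, and replaces each closed trace by the equal ordinary one to conclude (i), (ii) and (iii).

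It remains to prove the displayed identity, which is where all the work is. The inclusion $W(T)\cap\mathbb{T}\subseteq\overline{W(T)}\cap\mathbb{T}$ is trivial. For the reverse, fix $\lambda=e^{i\omega}\in\overline{W(T)}\cap\mathbb{T}$ and a sequence of unit vectors $(x_n)$ with $\langle Tx_n,x_n\rangle\to\lambda$, and set $A=\mathrm{Re}(\overline{\lambda}T)=\tfrac12(\overline{\lambda}T+\lambda T^{*})$. Then $A$ is self-adjoint and compact, being a linear combination of the compact operators $T$ and $T^{*}$; for every unit vector $x$ we have $\langle Ax,x\rangle=\mathrm{Re}\bigl(\overline{\lambda}\langle Tx,x\rangle\bigr)\le|\langle Tx,x\rangle|\le w(T)\le 1$, so $0\le I-A$; and $\langle Ax_n,x_n\rangle\to\mathrm{Re}(|\lambda|^{2})=1$, so $\|\sqrt{I-A}\,x_n\|^{2}=\langle(I-A)x_n,x_n\rangle\to 0$, whence $(I-A)x_n\to 0$ and $1\in\sigma_{ap}(A)\subseteq\sigma(A)$.

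Now compactness of $A$ together with $1\ne 0$ forces $1$ to be an \emph{eigenvalue} of $A$: there is a unit vector $e$ with $Ae=e$, and then
\[
1=\langle Ae,e\rangle=\mathrm{Re}\bigl(\overline{\lambda}\langle Te,e\rangle\bigr)\le|\langle Te,e\rangle|\le w(T)\le 1
\]
forces $\langle Te,e\rangle=\lambda$, so $\lambda\in W(T)$. I expect the delicate point to be exactly this passage from ``$1$ is an approximate eigenvalue of $A$'' to ``$1$ is an eigenvalue of $A$'': it is here that the compactness hypothesis is essential (for a general $\rho$-contraction one only gets $\lambda\in\overline{W(T)}$, so Theorem~\ref{thm:numr} by itself does not yield the statement), and the normalization $w(T)\le 1$ --- the only place where $\rho\le 2$ enters --- is what makes $I-A$ positive.
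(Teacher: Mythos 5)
Your proof is correct, and its skeleton coincides with the paper's: both arguments reduce the three assertions to Theorem~\ref{thm:numr} (together with the inclusion $C_{\rho}(H)\subseteq C_{2}(H)$ for $\rho\le 2$) by way of the single identity $W(T)\cap\mathbb{T}=\overline{W(T)}\cap\mathbb{T}$ for a compact $T$ with $w(T)\le 1$. Where you diverge is in the proof of that identity. The paper argues by weak compactness of the unit ball: it extracts a subsequence $x_{j(n)}$ converging weakly to some $x$, uses compactness of $T$ to get $Tx_{j(n)}\to Tx$ in norm, hence $\langle Tx_{j(n)},x_{j(n)}\rangle\to\langle Tx,x\rangle=\lambda$, and then the normalization trick $\lambda/\Vert x\Vert^{2}\in W(T)\subseteq\overline{\mathbb{D}}$ combined with $\Vert x\Vert\le 1$ forces $\Vert x\Vert=1$ and $\lambda\in W(T)$. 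You instead pass to the compact self-adjoint operator $A=\mathrm{Re}(\overline{\lambda}T)$, observe that $A\le I$ (this is exactly where $w(T)\le 1$, i.e.\ $\rho\le 2$, enters) and that $1\in\sigma_{ap}(A)$, invoke the fact that a nonzero approximate eigenvalue of a compact operator is an eigenvalue, and then use the equality case in $1=\mathrm{Re}\bigl(\overline{\lambda}\langle Te,e\rangle\bigr)\le\vert\langle Te,e\rangle\vert\le 1$ to conclude $\langle Te,e\rangle=\lambda$. Both routes are sound; yours produces an explicit maximizing unit vector and isolates cleanly the two hypotheses (compactness and $w(T)\le 1$), while the paper's is shorter but silently uses that $\langle Tx_{j(n)},x_{j(n)}\rangle\to\langle Tx,x\rangle$ when one factor converges in norm and the other only weakly. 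No gap in your argument.
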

\begin{proof}By using the Theorem \ref{thm:numr} and Proposition \ref{rho1<<rho2}, it suffices to prove that $\overline{W(T)}\cap \mathbb{T}= W(T)\cap \mathbb{T}$ for each $T\in C_{2}(H)\cap \mathcal{K}(H)$. Indeed, let $\lambda \in \overline{W(T)}\cap \mathbb{T}$, then $\lambda$ is a limit of $\langle Tx_n , x_n\rangle$ for some sequence $(x_n)$ of unit vectors. Therefore, there exist a subsequence $(x_{j(n)})$ of $(x_n)$ such that $x_{j(n)}$ converge to some $x$ in the weak star topology. Since $T$ is a compact operator then $Tx_{j(n)} \longrightarrow Tx$  in the norm topology, this implies that  $\lambda=\langle Tx,x\rangle$, and hence $x \neq 0$. Consequently,  $\frac{\lambda}{\Vert x\Vert^2}\in W(T)\subseteq \overline{\mathbb{D}}$. So $\frac{1}{\Vert x\Vert^2}\leq 1$ and hence $\Vert x\Vert^2\geq 1$, but we also have $\Vert x\Vert^2\leq 1$, this means that $\Vert x\Vert = 1$ and $\lambda \in W(T)$.
\end{proof}
\subsection{Weak stability and Harnack domination}

One says that an operator is weakly stable if $\lim_{n\rightarrow+\infty}T^{n}  = 0$ in the weak topology of $B (H)$. Also we have that this is equivalent to $T^*$ is weakly stable.

We give the following proposition which is useful to study this property.
\begin{proposition}\label{inequalities-dilations}
Let $H$ be a separable Hilbert space. Then, we have

\begin{enumerate}

	\item [(i)] Let $T \in C_{\rho}(H)$ and denote by $V$ its minimal isometric $\rho$-dilation. Then, for every $m \geq 1$, we have 
$$
\Vert\sum_{k=1}^{m}V^{\ast k+1}x_k\Vert\leq \Vert\sum_{k=1}^{m}T^{\ast k}x_k\Vert\leq \rho\Vert \sum_{k=1}^{m}V^{\ast k}x_k \Vert
$$
for any m-tuple $\left( x_1,\cdots, x_m\right) $ of vectors of $H$.

     \item [(ii)]
Assume that $T_1$ be Harnack dominated by $T_0$ in $C_{\rho}(H)$ for a constant $c\geq 1$. If $V_i$ acting on $K_i \supseteq H$ is the minimal isometric $\rho$-dilation of $T_i$ $(i=0, 1)$, then we have
$$
\Vert\sum_{k=1}^{m}V_1^kx_k\Vert\leq c\Vert \sum_{k=1}^{m}V_0^kx_k \Vert
$$
for any m-tuple $\left( x_1,\cdots, x_m\right) $ of vectors of $H$.
\end{enumerate}
\end{proposition}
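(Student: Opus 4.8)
The plan is to obtain part~(ii) at once from Theorem~\ref{harnack}, and then to treat part~(i), whose two inequalities are of very different depth.

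For part~(ii) I would use condition~(v) of Theorem~\ref{harnack}: since $T_1\stackrel{H}{\underset{c}{\prec}}T_0$, there is $S\in B(K_0,K_1)$ with $S(H)\subset H$, $S|_H=I_H$, $SV_0=V_1S$ and $\Vert S\Vert\leq c$. Iterating $SV_0=V_1S$ gives $V_1^kS=SV_0^k$ for every $k\geq1$, so for any $m$-tuple $(x_1,\dots,x_m)$ of vectors of $H$ one has, using $Sx_k=x_k$,
$$
\sum_{k=1}^m V_1^kx_k=\sum_{k=1}^m V_1^kSx_k=S\Big(\sum_{k=1}^m V_0^kx_k\Big),
$$
and taking norms with $\Vert S\Vert\leq c$ finishes the argument; this is essentially one line once Theorem~\ref{harnack} is available.

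For part~(i) I would first record the elementary consequences of the dilation relation. From $T^n=\rho P_HV^n|_H$ ($n\geq1$), taking adjoints and using $H\subset K$ (so that $(P_HV^n|_H)^\ast=P_HV^{\ast n}|_H$) gives $T^{\ast n}=\rho P_HV^{\ast n}|_H$ for $n\geq1$. Hence, writing $u=\sum_{k=1}^m V^{\ast k}x_k$, one has $\sum_{k=1}^m T^{\ast k}x_k=\rho P_Hu$ and $\sum_{k=1}^m V^{\ast(k+1)}x_k=V^\ast u$. The right-hand inequality is then immediate, since $\Vert\rho P_Hu\Vert\leq\rho\Vert u\Vert$. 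For the left-hand inequality I would use these identities to reformulate it as the assertion that $\Vert V^\ast u\Vert\leq\rho\Vert P_Hu\Vert$ for every $u\in\mathcal M:=\overline{\operatorname{span}}\{V^{\ast k}h:\ k\geq1,\ h\in H\}$. As $V$ is an isometry, $VV^\ast=I-P_{\mathcal N}$ with $\mathcal N=\ker V^\ast$, so $\Vert V^\ast u\Vert^2=\Vert u\Vert^2-\Vert P_{\mathcal N}u\Vert^2$ and the inequality becomes $\langle(\rho^2P_H-VV^\ast)u,u\rangle\geq0$ on $\mathcal M$, i.e.\ $\Vert u\Vert^2\leq\rho^2\Vert P_Hu\Vert^2+\Vert P_{\mathcal N}u\Vert^2$. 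I would establish this positivity from the explicit model of the minimal isometric $\rho$-dilation: expanding $\Vert u\Vert^2=\sum_{j,k}\langle V^{\ast k}x_k,V^{\ast j}x_j\rangle$ and computing the Gram entries in the model, together with $P_HV^{\ast n}|_H=\tfrac1\rho T^{\ast n}$ and the telescoping identity $\Vert V^{\ast j}h\Vert^2-\Vert V^{\ast(j+1)}h\Vert^2=\Vert P_{\mathcal N}V^{\ast j}h\Vert^2$, reduces the claim to a positivity that can be checked by a direct computation; in the simplest case $m=1$, $x_1=x$, the bound $\Vert V^{\ast2}x\Vert\leq\Vert T^\ast x\Vert$ unwinds to $\Vert x\Vert^2-\Vert T^\ast x\Vert^2\leq\Vert P_{\mathcal N}x\Vert^2+\Vert P_{\mathcal N}V^\ast x\Vert^2$, which already encodes how the wandering space $\mathcal N$ of the dilation is positioned relative to $H$.

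The step I expect to be the genuine obstacle is precisely this left-hand inequality of~(i). For $\rho=1$ it is transparent, because the minimal isometric dilation of a contraction satisfies $V^\ast|_H=T^\ast|_H$: then $\sum_{k=1}^m V^{\ast(k+1)}x_k=T^\ast\big(\sum_{k=1}^m T^{\ast k}x_k\big)$ and $\Vert T^\ast\Vert\leq1$ suffices. For $\rho>1$ the off-diagonal part of the dilation no longer vanishes — one has, e.g., $P_HVP_{H^\perp}V|_H=\tfrac{\rho-1}{\rho^2}T^2\neq0$ whenever $T^2\neq0$ — so one must genuinely work with the concrete structure of the minimal isometric $\rho$-dilation and of its defect space $\mathcal N$. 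By contrast, part~(ii) and the right-hand inequality of~(i) are short manipulations of the identity $T^n=\rho P_HV^n|_H$.
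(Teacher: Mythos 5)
Part~(ii) and the right-hand inequality of~(i) are correct and coincide with the paper's argument: the intertwiner $S$ from condition~(v) of Theorem~\ref{harnack} gives $\sum_k V_1^kx_k=S\bigl(\sum_k V_0^kx_k\bigr)$, and $T^{\ast k}=\rho P_HV^{\ast k}|_H$ gives $\sum_k T^{\ast k}x_k=\rho P_H\bigl(\sum_k V^{\ast k}x_k\bigr)$. Your reformulation of the left-hand inequality of~(i) as $\Vert V^{\ast}u\Vert\leq\rho\Vert P_Hu\Vert$, equivalently $\Vert u\Vert^2\leq\rho^2\Vert P_Hu\Vert^2+\Vert P_{\mathcal N}u\Vert^2$ on the relevant subspace, is also a correct restatement. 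But at that point you stop: you defer the actual positivity to ``a direct computation'' in the explicit model of the minimal isometric $\rho$-dilation, verify only the case $\rho=1$ (where $V^{\ast}|_H=T^{\ast}$ trivializes everything) and the case $m=1$ in reformulated form, and you yourself flag this step as ``the genuine obstacle.'' That is precisely the content of the proposition, so as written the proof of the left-hand inequality is missing. Moreover the route you propose — computing the Gram entries $\langle V^{\ast k}x,V^{\ast j}y\rangle$ in the Sch\"affer-type model and checking positivity of the resulting quadratic form for arbitrary $m$ — is substantially harder than what is needed, and there is no evidence in the proposal that it closes.

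The paper closes this step without touching the model, by duality and minimality. For $h=\sum_{i=0}^{n}V^{i}h_i$ with $h_i\in H$, the relation $P_HV^{n}|_H=\tfrac1\rho T^{n}$ yields the identity
\begin{equation*}
\Bigl\langle \sum_{k=1}^{m}T^{\ast k}x_k,\,Vh\Bigr\rangle
=\sum_{k,i}\langle T^{\ast k}x_k,\,V^{i+1}h_i\rangle
=\frac1\rho\sum_{k,i}\langle x_k,\,T^{k+i+1}h_i\rangle
=\Bigl\langle \sum_{k=1}^{m}V^{\ast k+1}x_k,\,h\Bigr\rangle ,
\end{equation*}
and such $h$ are dense in $K$ by minimality of the dilation, so the identity holds for all $h\in K$. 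Since $V$ is isometric, $\{Vh:\Vert h\Vert=1\}$ lies in the unit sphere, whence
\begin{equation*}
\Bigl\Vert \sum_{k=1}^{m}V^{\ast k+1}x_k\Bigr\Vert
=\sup_{\Vert h\Vert=1}\Bigl\vert\Bigl\langle \sum_{k=1}^{m}T^{\ast k}x_k,\,Vh\Bigr\rangle\Bigr\vert
\leq\Bigl\Vert \sum_{k=1}^{m}T^{\ast k}x_k\Bigr\Vert .
\end{equation*}
This one-line duality argument is exactly the ingredient your proposal lacks; with it inserted, the rest of what you wrote stands.
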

\begin{proof}
(i) Let $h=\sum_{i=0}^{n}V^{i}h_i$ with $h_i \in H$, then we have
\begin{align*}
\langle \sum_{k=1}^{m}T^{\ast k}x_k, Vh\rangle &= 
\sum_{k=1}^{m}\sum_{i=0}^{n}\langle T^{\ast k}x_k, V^{i+1}h_i \rangle
=\frac{1}{\rho}\sum_{k=1}^{m}\sum_{i=0}^{n}\langle T^{\ast k}x_k, T^{i+1}h_i \rangle\\
&=\sum_{k=1}^{m}\sum_{i=0}^{n}\langle V^{\ast k+i +1}x_k, h_i \rangle
=\langle \sum_{k=1}^{m}V^{\ast k+1}x_k, h\rangle
\end{align*}
Since the subset of all elements $h$ having the above form is dense in $K$, we get
\begin{align*}
 \Vert \sum_{k=1}^{m} T^{\ast k}x_k \Vert & 
 =\sup_{\Vert h \Vert=1}  \vert  \langle \sum_{k=1}^{m}T^{\ast k}x_k, h\rangle \vert 
\geq \sup_{\Vert h \Vert=1}  \vert  \langle \sum_{k=1}^{m}T^{\ast k}x_k, Vh\rangle \vert\\  
&\geq \sup_{\Vert h \Vert=1}  \vert  \langle \sum_{k=1}^{m}V^{\ast k+1}x_k, h\rangle
=\Vert \sum_{k=1}^{m} V^{\ast k+1}x_k \Vert
\end{align*}
and the left-hand side inequality is obtained. The right-hand side inequality is
obvious.\\

(ii) Now, suppose that $T_1\stackrel{H}{\underset{c}{\prec}}T_0$ in $C_{\rho}(H)$ and 
$V_i$ acting on $K_i \supseteq H$ is the minimal isometric $\rho$-dilation of $T_i$ $(i=0, 1)$. Using Theorem \ref{harnack}, we know that there exists an operator $S\in B(K_0 , K_1 )$ such that $S(H) \subset H$, $S|_{H}=I$, $SV_0 =V_{1}S$ and $\left\|S\right\|\leq c$. Let $\left( x_1,\cdots, x_m\right) $ be a $m$-tuple of vectors of $H$. Observe that $SV_0^k =V_{1}^kS$ for any positive integer $k$, thus we get
$$
\Vert\sum_{k=1}^{m}V_1^kx_k\Vert =\Vert\sum_{k=1}^{m}V_1^k Sx_k\Vert
=\Vert S\left[ \sum_{k=1}^{m}V_0^k x_k\right] \Vert
\leq c \Vert \sum_{k=1}^{m}V_0^k x_k \Vert.
$$
\end{proof}
\begin{lemma}\label{lem:WS} A $\rho$-contraction $T$ is weakly stable if and only if the minimal isometric $\rho$-dilation of $T$ is weakly stable.
\end{lemma}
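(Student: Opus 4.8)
The plan is to reduce everything to the defining intertwining of the minimal isometric $\rho$-dilation together with a density argument on the dilation space. Write $V$ on $K \supseteq H$ for the minimal isometric $\rho$-dilation of $T$; recall that $T^n = \rho P_H V^n|_H$ for all $n \geq 1$ (so that $P_H V^m|_H = \rho^{-1}T^m$ for $m\geq1$, as already used in the proof of Proposition \ref{inequalities-dilations}), that $K = \bigvee_{n \geq 0} V^n H$ by minimality, and that $V$ being an isometry gives $\|V^n\| = 1$ and $V^{*j}V^k = V^{k-j}$ whenever $k \geq j$.

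The implication ``$V$ weakly stable $\Rightarrow$ $T$ weakly stable'' is immediate: for $x, y \in H$ one has $\langle T^n x, y\rangle = \rho\langle P_H V^n x, y\rangle = \rho\langle V^n x, y\rangle \to 0$, so $T$ is weakly stable.

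For the converse, assume $T$ (equivalently, as noted just before the lemma, $T^*$) is weakly stable. First I would record the elementary fact that $\langle V^m h, g\rangle = \langle V^m h, P_H g\rangle = \rho^{-1}\langle T^m h, g\rangle \longrightarrow 0$ as $m\to\infty$, for all $h, g \in H$. Then I would test weak convergence of $(V^n)$ on the dense set of vectors of the form $u = \sum_{i=0}^{p} V^i h_i$ and $v = \sum_{j=0}^{q} V^j g_j$ with $h_i, g_j \in H$: for $n \geq q$, expanding and using $V^{*j}V^{n+i} = V^{n+i-j}$ gives $\langle V^n u, v\rangle = \sum_{i,j}\langle V^{n+i-j} h_i, g_j\rangle$, which tends to $0$ by the preceding fact since each exponent $n+i-j \geq n-q \to \infty$. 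Finally, since $\sup_n \|V^n\| = 1$, a routine $3\varepsilon$-approximation extends this to all $u, v \in K$, so $V^n \to 0$ weakly and $V$ is weakly stable.

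The only step requiring a little care is this last passage from the dense set to all of $K$, where one must combine the minimality of the dilation (to know that $\bigvee_{n\geq 0}V^nH = K$) with the uniform bound $\|V^n\|=1$; the rest is a direct computation with the dilation relation, and I do not expect any serious obstacle.
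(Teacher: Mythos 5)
Your proof is correct, but it takes a genuinely different route from the paper's. For the direction ``$T$ weakly stable $\Rightarrow$ $V$ weakly stable'', the paper does not argue directly with matrix coefficients: it invokes the Blum--Hanson property of $T^{*}$ to convert weak convergence $T^{*n}h\to 0$ into \emph{norm} convergence of averages $\frac{1}{N}\sum_{n}T^{*k_{n}}h\to 0$ along arbitrary increasing sequences $(k_n)$, transfers this to $V^{*}$ through the norm inequality $\Vert\sum_{k}V^{*k+1}x_k\Vert\leq\Vert\sum_{k}T^{*k}x_k\Vert$ of Proposition \ref{inequalities-dilations}(i), extends by density and uniform boundedness, and then reads weak stability of $V$ back off the Blum--Hanson property of $V^{*}$. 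You instead use the compression identity $P_{H}V^{m}|_{H}=\rho^{-1}T^{m}$ ($m\geq 1$) to get $\langle V^{m}h,g\rangle=\rho^{-1}\langle T^{m}h,g\rangle\to 0$ for $h,g\in H$, push this to the dense subspace $\operatorname{span}\{V^{i}h:\ i\geq 0,\ h\in H\}$ via $V^{*j}V^{n+i}=V^{n+i-j}$, and close with the standard $3\varepsilon$-argument using $\Vert V^{n}\Vert=1$; the easy direction is identical in both proofs. Your argument is more elementary and self-contained for this particular lemma. What the paper's heavier machinery buys is reusability: in Corollary \ref{*stability} the two dilations $V_{0}$ and $V_{1}$ are related only through an intertwining norm inequality (Proposition \ref{inequalities-dilations}(ii)), with no pointwise compression formula available, and there the Blum--Hanson conversion between weak convergence and norm convergence of averages is the tool that makes the transfer work; the paper's proof of the lemma is written so that exactly the same mechanism applies in both places.
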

\begin{proof}Let us assume that $T$ is weakly stable and $[V, K]$ is the minimal isometric $\rho$-dilation of  $T$. Hence  $T^*$ is also  weakly stable, i.e   $T^{*n}h \longrightarrow 0$ in the weak topology. Since $T^*$ has the Blum-Hanson property,  for each $h \in H$ and every increasing sequence $(k_n)_{n \geq 0}$ of positive integers, we have
$$\frac{1}{N}\sum_{n=0}^{N} T^{*k_{n}}h  \longrightarrow 0$$
in the norm topology. For each $N$, 
set $x_k=h/N$ if there exists an integer $n$ such that $k=k_n$ and $x_k=0$ otherwise, and use Proposition \ref{inequalities-dilations} (i). We derive
$$
\frac{1}{N} \sum_{n=1}^{N} V^{*k_{n}+1}h \longrightarrow 0.
$$
It is enough to ensure that 
\begin{equation}\label{Blum-Hanson_partial}
\frac{1}{N} \sum_{n=0}^{N} V^{*l_{n}}x \longrightarrow 0.
\end{equation} 
for any increasing sequence $(l_n)_{n \geq 0}$ of positive integers and any $x \in H$.
Now, let $x=\sum_{i=1}^{m} V^{i}x_{i}$ with $x_i \in H$, we easily deduce from 
\eqref{Blum-Hanson_partial} that
$$
\frac{1}{N} \sum_{n=1}^{N} V^{*l_{n}}x \longrightarrow 0.
$$ 
Since the subset of all elements $x$ having the above form is dense in $K$ and that the
sequence of operators $1/N\left[ \sum_{n=1}^{N} V^{*l_{n}}\right] $ is a sequence of contractions, we derive that $V^{\ast}$ has the Blum-Hanson property. Thus, the sequence $(V^{\ast n}x)$ weakly converge to $0$ for any $x \in K$. Hence $V$ is weakly stable.

Conversely, assume that $V$ is weakly stable. Then for each $(x,y) \in H^2$ and any $n\geq 1$, we have $\langle T^{n}x, y \rangle =\rho \langle V^{n}x, y \rangle \longrightarrow 0$. Hence, $T$ is weakly stable.
\end{proof}
\begin{corollary}\label{*stability}
Let $T_0$ and $T_1$ be two operators in $C_{\rho}(H)$. Then, we have:
\begin{enumerate}
	\item [(i)] Assume that $T_1$ be Harnack dominated by $T_0$ in $C_{\rho}(H)$ and that $T_0$ is weakly stable (resp. stable). Then $T_1$ is also weakly stable (resp. stable). 
	\item [(ii)] Let $T_0$ and $T_1$ be Harnack equivalent in $C_{\rho}(H)$. Then $T_0$ is weakly stable (resp. stable) if and only if $T_1$ is weakly stable (resp. stable).
		\end{enumerate}
\end{corollary}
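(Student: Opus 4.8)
The plan is to reduce everything to the dilation characterization from Proposition \ref{inequalities-dilations} and Lemma \ref{lem:WS}. For part (i), suppose $T_1 \stackrel{H}{\prec} T_0$ with constant $c \geq 1$ and that $T_0$ is weakly stable. Let $V_i$ acting on $K_i \supseteq H$ be the minimal isometric $\rho$-dilation of $T_i$. By Lemma \ref{lem:WS}, $V_0$ is weakly stable on $K_0$; I want to conclude that $V_1$ is weakly stable on $K_1$, and then apply Lemma \ref{lem:WS} again (in the reverse direction) to get that $T_1$ is weakly stable. The link between the two dilations is the operator $S \in B(K_0,K_1)$ from Theorem \ref{harnack}(v) with $\|S\| \leq c$, $S|_H = I$, $SV_0 = V_1 S$; equivalently, the key quantitative estimate is Proposition \ref{inequalities-dilations}(ii), namely $\|\sum_{k=1}^m V_1^k x_k\| \leq c \|\sum_{k=1}^m V_0^k x_k\|$ for tuples of vectors in $H$.

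The main step is therefore: deduce weak stability of $V_1$ from weak stability of $V_0$ using that inequality. First I would note that the set $D = \{\sum_{k=1}^m V_1^k x_k : m \geq 1,\ x_k \in H\}$ is dense in $K_1$ (this is just minimality of the $\rho$-dilation, after absorbing the $k=0$ term: $H$ itself sits inside by taking a single term, wait—more carefully, the span of $\{V_1^k h : k \geq 0,\ h \in H\}$ is dense, so I should allow $k$ starting from $0$; since $V_1$ is an isometry, $\|V_1^{n}(\text{stuff})\|$ controls things and starting the index at $1$ versus $0$ only shifts by one application of the isometry, which is harmless). For a vector $y = \sum_{k} V_0^k x_k \in K_0$ of this form, $Sy = \sum_k V_1^k x_k$, and $\langle V_1^{*n} Sy, w\rangle = \langle Sy, V_1^n w\rangle$. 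The cleanest route: since $V_0$ is weakly stable and $S$ intertwines, for fixed $x_k \in H$ one has $V_1^n \sum_k V_1^k x_k = S V_0^n \sum_k V_0^k x_k$, and as $n \to \infty$, $V_0^n(\cdot) \to 0$ weakly in $K_0$; applying the bounded operator $S$ preserves weak convergence, so $V_1^n(\cdot) \to 0$ weakly on the dense set $D$. Because the operators $V_1^n$ are uniformly bounded (isometries, norm $1$), a standard $\varepsilon/3$ density argument upgrades this to $V_1^n z \to 0$ weakly for every $z \in K_1$. Hence $V_1$ is weakly stable, and Lemma \ref{lem:WS} gives that $T_1$ is weakly stable. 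The ``stable'' (strong convergence $T^n \to 0$) variant is handled the same way: strong stability of $V_0$ passes through the bounded $S$ to strong convergence $V_1^n z \to 0$ on $D$, then the uniform-boundedness/density argument gives it on all of $K_1$; one also needs the analogue of Lemma \ref{lem:WS} for strong stability, which is the easier direction $T^n x = \rho P_H V^n x \to 0$ and, conversely, a Blum--Hanson-type or direct argument—but actually strong stability of $T$ implies strong stability of $V$ is the content one should check, though since the paper phrases (i) with ``(resp. stable)'' I will simply remark that the same proof of Lemma \ref{lem:WS}, with ``weak'' replaced by ``strong'' throughout (the Blum--Hanson step being unnecessary in the strong case since one directly has $V^{*n+1}h \to 0$ from $T^{*n}h \to 0$ via the left inequality in Proposition \ref{inequalities-dilations}(i), wait—that inequality goes the wrong way; more simply $\|V^{*n+1} h\| = \|V^{*n}(V^* h)\|$ and one shows $\|V^{*n} x\| \to 0$ on the dense set using $\|\sum V^{*k} (\cdot)\|$-type estimates), yields the strong statement; I would state this carefully or cite the relevant power-bounded stability fact.

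Part (ii) is then immediate from part (i): if $T_0 \stackrel{H}{\sim} T_1$ then $T_1 \stackrel{H}{\prec} T_0$ and $T_0 \stackrel{H}{\prec} T_1$, so (i) applied in both directions gives that $T_0$ is weakly stable (resp. stable) iff $T_1$ is.

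I expect the main obstacle to be the rigorous passage from weak convergence on the dense subset $D \subset K_1$ to weak convergence on all of $K_1$: one must be slightly careful because weak convergence is not metrizable on the whole (possibly non-separable) space, but here $H$ is assumed separable, hence $K_1$ is separable, and combined with the uniform bound $\sup_n \|V_1^n\| = 1$ the $\varepsilon/3$ argument goes through without difficulty—this is exactly the ``sequence of contractions on a dense set'' trick already used in the proof of Lemma \ref{lem:WS}. The intertwining identity $S V_0^k = V_1^k S$ and the fact that $S$ is bounded (preserving weak convergence) are the two ingredients that make the reduction work; everything else is bookkeeping.
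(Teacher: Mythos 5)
Your treatment of the weak-stability case is correct and follows the same skeleton as the paper: pass from $T_0$ to its minimal isometric $\rho$-dilation $V_0$ by Lemma \ref{lem:WS}, transfer weak stability from $V_0$ to $V_1$, and return to $T_1$ by Lemma \ref{lem:WS} again; part (ii) is then immediate, exactly as in the paper. The only difference is the middle step: the paper transfers stability through the norm inequality of Proposition \ref{inequalities-dilations}(ii) combined with the Blum--Hanson property (as in the proof of Lemma \ref{lem:WS}), whereas you use the intertwiner $S$ of Theorem \ref{harnack}(v) directly, writing $V_1^{n}\bigl(\sum_k V_1^k x_k\bigr)=S\,V_0^{n}\bigl(\sum_k V_0^k x_k\bigr)$, invoking the weak-to-weak continuity of the bounded operator $S$, and then upgrading from the dense subspace $\bigvee_{k\geq 0}V_1^{k}H$ by uniform boundedness of the $V_1^{n}$. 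Since Proposition \ref{inequalities-dilations}(ii) is itself deduced from $S$, the two transfers rest on the same object; yours avoids the Blum--Hanson detour and is, if anything, more direct. (Do keep the index $k\geq 0$ in your dense set, as minimality only gives density of $\bigvee_{k\geq 0}V_1^{k}H$; your parenthetical correction on this point is the right one.)

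The genuine gap is in the ``(resp.\ stable)'' half of (i). You propose that ``strong stability of $V_0$ passes through the bounded $S$ to strong convergence $V_1^{n}z\to 0$,'' but $V_0$ is an isometry, so $\Vert V_0^{n}z\Vert=\Vert z\Vert$ for every $z$ and $V_0$ is never strongly stable in the forward sense on a nonzero space. Consequently the ``analogue of Lemma \ref{lem:WS} for strong stability'' in the form ``$T$ stable iff $V$ stable,'' with stability read as $V^{n}\to 0$ strongly, is false, and no forward-power argument of this shape can succeed. You half-detect the problem (the aside about the inequality ``going the wrong way'' and the switch to $V^{\ast n}$) but do not land on a correct substitute, ending with a promise to ``cite the relevant fact.'' The paper resolves this by quoting Lemma 3.5 of \cite{CaSu}, which gives the correct characterization of stability of a $\rho$-contraction in terms of its minimal isometric $\rho$-dilation (necessarily phrased through adjoint powers, consistent with the label of the corollary), and then applies Proposition \ref{inequalities-dilations}(ii). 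To complete your proof you would need to state and prove (or cite) that lemma --- for instance via the left-hand inequality of Proposition \ref{inequalities-dilations}(i), which controls $\Vert V^{\ast k+1}x\Vert$ by $\Vert T^{\ast k}x\Vert$ --- and then carry out the corresponding adjoint-power transfer between $V_0$ and $V_1$; as written, the strong case is not established.
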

\begin{proof}
(i) Assume that $T_0$ is weakly stable. Using Lemma \ref{lem:WS}, we see that the minimal isometric $\rho$-dilation $V_0$ is
weakly stable. Applying Proposition \ref{inequalities-dilations} (ii) and using the Blum-Hanson property as in the proof of Lemma \ref{lem:WS}, we deduce
than $V_1$ is weakly stable. Using again Lemma \ref{lem:WS}, we obtain the weak stability of $T_1$. 

Now, suppose that $T_0$ is stable. We deduce from Lemma 3.5 of \cite{CaSu} that
$V_0$ is stable. From Proposition \ref{inequalities-dilations} (ii) we derive that
$V_1$ is stable. Then, by Lemma 3.5 of \cite{CaSu} we obtain the stability of $T_1$.

The assertion (ii) is a direct consequence of (i).  
\end{proof}
\begin{remark}
1) Concerning the stability of two Harnack equivalent $\rho$-contractions, the assertion (ii) is exactly Corollary 3.6 of \cite{CaSu}. 

2) Since any $\rho$-contraction $T$ is similar to a contraction and power bounded, by \cite[Proposition 8.5]{Kub},  the residual spectrum $\sigma_{r}(T) $ of $T$ is included in  $\mathbb{D}$. By \cite[Proposition 8.4]{Kub} it follows that if any $\rho$-contraction $T$ is weakly stable then $\sigma_{p}(T)\subseteq \mathbb{D}$. In this case, according to Lemma \ref{lem:WS}, if $V$ is  the minimal isometric $\rho$-dilation of $T$, then  $\Gamma (V)=\sigma_{c}(V)$. So, if there exist $\lambda \in \sigma_{p}(T)$ such that $\vert \lambda \vert=1$ then $T$ is not weakly stable and this $\rho$-contraction  is in Harnack part of an operator  with $\rho$-\emph{numerical radius} one.
\end{remark}
\section{Examples of Harnack parts for some nilpotent matrices  with numerical radius one }\label{sec:3}

In the following, we try to describe the Harnack parts a nilpotent matrices  with numerical radius one. We begin by the nilpotent matrix of order one in the dimension two.  
\begin{theorem}\label{thm02} Let $ T_0 =
 \begin{pmatrix}
0 & 2 \\
0 & 0
\end{pmatrix} \in C_{2}(\mathbb{C}^{2})$, then the Harnack parts of $T_0$ is reduced to $\{T_0\}$.
\end{theorem}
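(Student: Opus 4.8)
The plan is to pin down $S$ in two stages: first up to unitary equivalence, using the numerical-range results of Theorem~\ref{thm:numr}, and then exactly, using the description of $\ker K_z^{\rho}$ on the torus coming from Theorem~\ref{thm:hpc1}. Observe at the outset that $T_0\in C_2(\mathbb{C}^2)$ with $w_2(T_0)=w(T_0)=1$, that $\sigma(T_0)=\{0\}$, so $\Gamma(T_0)=\Gamma_p(T_0)=\emptyset$, and that $W(T_0)=\overline{\mathbb{D}}$ (the numerical range of the $2\times 2$ nilpotent $\begin{pmatrix}0&2\\0&0\end{pmatrix}$ is the closed disc of radius $1$). Let $S\in C_2(\mathbb{C}^2)$ satisfy $S\stackrel{H}{\sim}T_0$; I want to conclude $S=T_0$.

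For the first stage I would argue that $S$ is unitarily equivalent to $T_0$. By Theorem~\ref{thm:numr}(iii) (with $\rho=2$), $\overline{W(S)}\cap\mathbb{T}=\overline{W(T_0)}\cap\mathbb{T}=\mathbb{T}$, hence $\mathbb{T}\subseteq\overline{W(S)}$; since $\overline{W(S)}$ is convex by Toeplitz--Hausdorff it contains $\mathrm{conv}(\mathbb{T})=\overline{\mathbb{D}}$, while $S\in C_2(\mathbb{C}^2)$ forces $w(S)\le 1$, i.e.\ $W(S)\subseteq\overline{\mathbb{D}}$, so by compactness $W(S)=\overline{\mathbb{D}}$. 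Invoking the elliptical range theorem, the numerical range of a $2\times 2$ matrix is a (possibly degenerate) ellipse with foci its eigenvalues; being a circular disc centred at the origin, it forces the two eigenvalues to coincide at $0$, so $S$ is nilpotent, hence unitarily equivalent to $\begin{pmatrix}0&s\\0&0\end{pmatrix}$ with $s>0$, whose numerical range is the disc of radius $s/2$. Matching radii gives $s=2$, so $S=U^*T_0U$ for some unitary $U\in M_2(\mathbb{C})$.

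For the second stage I would show $U$ is a scalar. By Corollary~\ref{harnackequ}, $\Gamma(S)=\Gamma(T_0)=\emptyset$, so also $\Gamma_p(S)=\emptyset$; since $T_0$ is compact with $w_2(T_0)=1$ and $\Gamma_p(T_0)=\emptyset$, Theorem~\ref{thm:hpc1} gives $\ker K_z^2(S)=\ker K_z^2(T_0)$ for all $z\in\mathbb{T}$. A short computation using $T_0^2=0$ gives, for $z\in\mathbb{T}$,
$$K_z^2(T_0)=(I-\overline z T_0)^{-1}+(I-zT_0^*)^{-1}=\begin{pmatrix}2&2\overline z\\2z&2\end{pmatrix},$$
whose kernel is the line $\mathbb{C}\,(1,-z)^{T}$; and since $K_z^2(U^*T_0U)=U^*K_z^2(T_0)U$, one has $\ker K_z^2(S)=U^*\bigl(\mathbb{C}\,(1,-z)^{T}\bigr)$. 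Thus $U^*$, hence $U$, leaves invariant every complex line $\mathbb{C}\,(1,-z)^{T}$, $z\in\mathbb{T}$; these lines are pairwise distinct, and a linear operator on $\mathbb{C}^2$ fixing three distinct lines through the origin must be a scalar multiple of the identity, so $U=\lambda I$ with $|\lambda|=1$, whence $S=U^*T_0U=|\lambda|^2T_0=T_0$. This shows the Harnack part of $T_0$ is $\{T_0\}$.

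I expect the main obstacle to be the first stage: turning the numerical-range information into the rigid statement $S=U^*T_0U$, which rests on the elliptical range theorem and the classification of $2\times 2$ matrices with circular numerical range. The second stage is then essentially bookkeeping; if one prefers not to invoke Theorem~\ref{thm:hpc1}, one may instead note that $\Gamma(S)=\Gamma(T_0)=\emptyset$ makes $K_z^2(S)$ and $K_z^2(T_0)$ uniformly bounded on $\mathbb{D}$ with continuous extensions to $\overline{\mathbb{D}}$, so the Harnack inequalities $\alpha K_z^2(T_0)\le K_z^2(S)\le\beta K_z^2(T_0)$ from Theorem~\ref{harnack} pass to $z\in\mathbb{T}$ and force equality of kernels there.
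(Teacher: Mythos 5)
Your proof is correct, but it takes a genuinely different route from the paper's. The paper works entirely with the boundary kernel condition: it computes $K_z^2(T_0)=2\begin{pmatrix}1&\overline z\\ z&1\end{pmatrix}$, observes that on $\mathbb{T}$ its kernel is the line spanned by $(1,-z)^{T}$, transfers this kernel to $K_z^2(T)$ via the two-sided Harnack inequalities extended to $\overline{\mathbb{D}}$, and then extracts the matrix entries of $T$ by Fourier analysis, integrating the identity $K^2_{1,\theta}(T)e_1-e^{i\theta}K^2_{1,\theta}(T)e_2=0$ against $1$ and $e^{-i\theta}$ to get $Te_2=2e_1$ and $T^*e_1=2e_2$; the one remaining free entry $b$ in position $(2,1)$ is then killed by testing the constraint $w(T)\le 1$ on suitable unit vectors. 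You instead first pin $S$ down up to unitary equivalence, using Theorem \ref{thm:numr}(iii) together with the elliptical range theorem (a standard but external fact about $2\times 2$ matrices), and only then invoke the kernel condition --- via the same computation of $\ker K_z^2(T_0)$ --- to show that the conjugating unitary preserves infinitely many distinct lines and is therefore scalar. Both arguments are complete; each step of yours checks out (in particular $\ker K_z^2(U^*T_0U)=U^*\ker K_z^2(T_0)$ and the fact that an operator on $\mathbb{C}^2$ fixing three distinct lines is scalar). The paper's method is more self-contained and is exactly the template reused for the $n\ge 3$ nilpotent examples later in Section \ref{sec:3}; your method buys a cleaner conceptual split (numerical-range rigidity first, then a purely geometric argument about invariant lines), but it is specific to dimension two, where a circular numerical range characterizes the unitary orbit of $T_0$.
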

\begin{proof} Let $ T\in C_{2}(\mathbb{C}^{2})$ such that $T{\stackrel{H}{\sim}} T_0$, then by Theorem \ref{harnack}, there exists $\alpha, \beta>0$ $(\alpha\leq 1$, $\beta \geq 1$) such that 
\begin{equation}\label{equ22}\alpha K_{z}^{2} (T_0) \leq  K_{z}^{2} (T)\leq \beta K_{z}^{2} (T_0), \quad \text{ for all } z\in \mathbb{D}.
\end{equation}
By Corollary \ref{harnackequ}, the operator $T$ not admits eigenvalues in $\mathbb{T}$. Hence, $K_{z}^{2} (T_0)$ and  $ K_{z}^{2}(T)$ are uniformly bounded in $\mathbb{D}$ and may be extended to a positive operators on  $\overline{\mathbb{D}}$.

We have $$K_{z}^{2}(T_0)= 2\begin{pmatrix}
1 & \overline{z} \\
z & 1
\end{pmatrix},$$
thus $\det (K_{z}^{2}(T_0))=4(1-\left|z\right|^{2})$ and $Ker(K_{z}^{2}(T_0))$ of rang one on $\mathbb{T}$. Let $v(z)= \begin{pmatrix}
1 \\
-z
\end{pmatrix}$, then $K_{z}^{2}(T_0)v(z)=0$ on $\mathbb{T}$. This implies by \eqref{equ22} that

\begin{equation}\label{v(z)}
0=K_{1,\theta}^{2}(T)v(e^{i\theta})=  K_{1,\theta}^{2}(T)e_1 -e^{i\theta} K_{1,\theta}^{2}(T) e_2 =0  \quad \text{ for all } \theta \in \mathbb{R}.
\end{equation} 
Multiplying successively \eqref{v(z)} by $1$ and $e^{-i\theta}$, and integrating with respect the Haar measure $m$ on the torus, we obtain: $Te_2 =2 e_1$ and $T^{*}e_1 =2 e_2$. Thus $T$ take the form 
 $$T= \begin{pmatrix}
0 & 2 \\
b & 0
\end{pmatrix},$$
with $b\in \mathbb{C}$. Since $w(T)\leq 1$, we have
$$ \left|2 x_2 \overline{x_1} + b x_1\overline{x_2} \right| \leq \left|x_1\right|^{2} + \left|x_2\right|^{2}.$$
If we take $x_1 =\frac{\sqrt{2}}{2}$ and $x_2 =\frac{\sqrt{2}}{2}e^{i\theta},$ we get
$$
\left|1 + b e^{-2i\theta}\right| \leq 1.
$$
In particular, for  $\theta =\frac{\arg b}{2}$
$$
1 +\left| b \right| \leq 1
$$ 
This implies that $b=0$ and $T=T_0$.
\end{proof}
In the following result, we describe the Harnack parts of  a nilpotent matrix of order two in $C_{2}(\mathbb{C}^{n})$, $n\geq 3$,    with numerical radius one.
\begin{theorem}\label{thm: nilp2}Let  $N\in C_{2}(\mathbb{C}^{n})$, $n\geq 3$ such that   $$  N =\begin{pmatrix}
0&0&\ldots& &a\\
0&0&\ldots& &0\\
\vdots&\vdots&\ddots& &\vdots \\
0&0&\ldots& & 0\\
\end{pmatrix}
 $$
with $\left|a\right|=2$, then the Harnack parts of $N$ is the  set of all matrices of $C_{2}(\mathbb{C}^{n})$ of  the  form 
\begin{equation}\label{equ;harnakn}
T =\begin{pmatrix}
0&0&a\\
0& B &0\\

0&0& 0
\end{pmatrix}
\end{equation}
   with $B\in C_{2}(\mathbb{C}^{n-2})$ such that $w(B)<1$.
\end{theorem}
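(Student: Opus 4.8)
The plan is to adapt the strategy used for the size-two Jordan block (Theorem \ref{thm02}) and, more relevantly, the structure revealed by Corollary \ref{coro:hpc12} and Theorem \ref{thm:hpc1} to this rank-one nilpotent $N$ on $\mathbb{C}^n$. First I would compute $K_z^2(N)$ explicitly. Since $N$ has $N^*N = |a|^2 E_{nn} = 4E_{nn}$ and $NN^* = 4E_{11}$, one finds $(I-\overline z N)^{-1} = I + \overline z N$ and $(I - zN^*)^{-1} = I + z N^*$, so
\begin{equation}
K_z^2(N) = 2I + \overline z N + z N^* = \begin{pmatrix} 2 & 0 & \cdots & 0 & \overline z a \\ 0 & 2 & & & 0 \\ \vdots & & \ddots & & \vdots \\ 0 & & & 2 & 0 \\ z\overline a & 0 & \cdots & 0 & 2 \end{pmatrix},
\end{equation}
acting nontrivially only on $\mathrm{span}\{e_1,e_n\}$. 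On $\mathbb{T}$ this $2\times2$ block has determinant $4 - |z|^2|a|^2 = 4 - 4 = 0$, so $\ker K_z^2(N)$ is one-dimensional on the torus, spanned by $v(z) = 2 e_1 - z\overline a\, e_n$ (using $|a|^2=4$); note $\Gamma_p(N)=\emptyset$ since $N$ is nilpotent.

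Next, by Proposition \ref{prop2.3} (or the argument in Theorem \ref{thm:hpc1}), any $T \stackrel{H}{\sim} N$ in $C_2(\mathbb{C}^n)$ must satisfy $\ker K_z^2(T) = \ker K_z^2(N)$ for all $z\in\mathbb{D}$, and since both operators have empty $\Gamma_p$ they extend continuously to $\overline{\mathbb{D}}$, so $\ker K_{e^{i\theta}}^2(T) \ni v(e^{i\theta})$ for all $\theta$. Writing out $K_{e^{i\theta}}^2(T)\,v(e^{i\theta}) = 0$ gives, as in \eqref{v(z)}, an identity $2 K_{e^{i\theta}}^2(T) e_1 = e^{i\theta}\overline a\, K_{e^{i\theta}}^2(T) e_n$ valid for all $\theta$. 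I would then expand $K_{e^{i\theta}}^2(T) = 2I + \sum_{k\geq1} e^{-ik\theta} T^k + \sum_{k\geq1} e^{ik\theta} T^{*k}$ (the series converge since $r(T)<1$, as $T$ is power-bounded with $T\stackrel{H}{\sim}N$ forces spectral radius $<1$ by Theorem \ref{spectral+torus}, $\Gamma(T)=\emptyset$) and compare Fourier coefficients in $\theta$. Matching the coefficient of $e^{i\theta}$ yields $T e_1 = \tfrac{a}{2}\cdot 2\, e_1$-type relations; carefully, the constant-term and first-harmonic comparisons should force $T^{*k} e_1 = 0$ and $T^k e_n = 0$ for $k\geq 2$, $T^* e_1 = \overline a\, e_n / $(something), $T e_n = a e_1/$(something), and all lower entries in the first column and last row to vanish. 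This pins down $T$ to the block form \eqref{equ;harnakn} with $T e_n = a e_1$, $T^* e_1 = \overline a e_n$, and $T$ mapping $\mathrm{span}\{e_2,\dots,e_{n-1}\}$ into itself via some $B$; the compression $R$ above the diagonal must vanish exactly as in Theorem \ref{thm02}'s use of \cite[Proposition 3]{CaZe}, or directly from the Fourier-coefficient bookkeeping.

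Once $T$ has the form \eqref{equ;harnakn}, the remaining task is to identify exactly which $B \in C_2(\mathbb{C}^{n-2})$ occur. Here I would invoke Corollary \ref{coro:hpc12}: the subspace $\mathrm{span}\{e_2,\dots,e_{n-1}\}$ is, up to the kernel bookkeeping, the part of $\mathbb{C}^n$ on which $K_z^2$ is "large", and after splitting off the $2$-dimensional $\{e_1,e_n\}$ piece one checks that $K_z^2(T)$ decomposes compatibly, so that $T \stackrel{H}{\sim} N$ is equivalent to $B \stackrel{H}{\sim} 0$ in $C_2(\mathbb{C}^{n-2})$. By the theorem of Cassier--Suciu \cite[Theorem 4.4]{CaSu} quoted in the introduction, $B\stackrel{H}{\sim}0$ iff $w_2(B) = w(B) < 1$, giving precisely the stated description. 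For the converse direction one verifies directly that any such $T$ is in $C_2(\mathbb{C}^n)$ (a block-diagonal-ish sum of the rank-one nilpotent part, which is in $C_2$, and $B$ with $w(B)<1\le 1$) and that $\ker K_z^2(T) = \ker K_z^2(N) = \mathrm{span}\{v(z)\}$ on $\mathbb{T}$, then applies Theorem \ref{thm:hpc1} (whose hypotheses $T,N$ compact with $w_\rho=1$, $\Gamma_p$ empty are met) to conclude $T \stackrel{H}{\sim} N$.

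The main obstacle I anticipate is the Fourier-coefficient comparison step: showing that the single vector identity $K_{e^{i\theta}}^2(T) v(e^{i\theta}) = 0$ forces the full block structure \eqref{equ;harnakn}, including that the off-diagonal block connecting $\{e_2,\dots,e_{n-1}\}$ to $\{e_1,e_n\}$ vanishes and that the powers $T^k$ decay into the right subspace. This requires either a clean induction on $k$ using power-boundedness and $r(T)<1$, or an appeal to \cite[Proposition 3]{CaZe} to kill the off-diagonal piece attached to a torus eigenvector-like structure; the subtlety is that $v(z)$ depends on $z$, so one is really exploiting that the family $\{v(e^{i\theta})\}_\theta$ spans enough to constrain $T$ globally. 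A secondary technical point is justifying the continuous extension of $K_z^2(T)$ to $\overline{\mathbb{D}}$ and the convergence of the power series, which follows from $\Gamma(T) = \emptyset$ (Theorem \ref{spectral+torus}) together with compactness, exactly as in the proof of Theorem \ref{thm:hpc1}.
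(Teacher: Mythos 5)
Your proposal is correct and follows essentially the same route as the paper: compute $K_z^2(N)=2I+\overline{z}N+zN^{*}$, extend to $\overline{\mathbb{D}}$ (using $\Gamma(N)=\emptyset$), feed the torus kernel vector $v(e^{i\theta})$ into $K^{2}_{e^{i\theta}}(T)v(e^{i\theta})=0$, extract Fourier coefficients to pin down the block form, and reduce the problem to $B\stackrel{H}{\sim}0$, i.e.\ $w(B)<1$, with the converse obtained from the block decomposition of $K_z^2(T)$. The ``main obstacle'' you flag resolves exactly as you propose and more easily than you fear: matching only the coefficients of $1$, $e^{\pm i\theta}$ and $e^{-2i\theta}$ already gives $T^{*}e_1=\overline{a}e_n$, $Te_n=ae_1$, $T^{*}e_n=0$, $Te_1=0$, which annihilate all off-diagonal blocks without any induction on higher powers or appeal to \cite{CaZe}.
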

\begin{proof}Let $ T\in C_{2}(\mathbb{C}^{n})$ such that $T{\stackrel{H}{\sim}} N$. By Corollary \ref{harnackequ}, the operator $T$ not admits eigenvalues in $\mathbb{T}$. Hence, $K_{z}^{2} (N)$ and  $ K_{z}^{2}(T)$ are uniformly bounded in $\mathbb{D}$ and may be extended to a positive operators on  $\overline{\mathbb{D}}$. We have
$$K_{z}^{2}(N)= \begin{pmatrix}
2 & 0 & a  \overline{z}\\
0 & 2I_{n-2} & 0\\
\overline{a} z & 0 & 2
\end{pmatrix},$$
with $I_{n-2}$ denote the identity matrix on the linear space spanned by the vectors  $e_2, \ldots, e_{n-1}$ of the canonical basis of $\mathbb{C}^{n}$.
Then   $\det (K_{z}^{2}(N))=2^{n-2}(4-\left|a\right|^{2}\left|z\right|^{2})$. Let 
$v(z)=-a\overline{z}e_1+2e_n$, then $K_{z}^{2}(N)v(z)=0$ on $\mathbb{T}$. Thus by proposition \ref{prop2.3}, $K_{z}^{2}(T)v(z)=0$ on $\mathbb{T}$. This implies that
\begin{equation}\label{equ:niln}
-ae^{-i\theta} K_{1,\theta}^{2}(T)e_1 +2 K_{1,\theta}^{2}(T) e_n =0  \quad \text{ for all } \theta \in \mathbb{R}.
\end{equation}
Multiplying successively \eqref{equ:niln} by $1$, $e^{i\theta}$, $e^{-i\theta}$ and $e^{2i\theta}$, and integrating with respect $m$, we obtain:    
\begin{equation}\label{equ:nil1n}
T^{*}e_1 =\overline{a} e_n, Te_n =a e_1,
T^{*} e_n =0 \text{ and } Te_1  = 0.
\end{equation} 
By  \eqref{equ:nil1n} , the matrix  $T$ take the form \eqref{equ;harnakn}. Hence
\begin{equation}\label{Kz(T)}
K_{z}^{2}(T)= \begin{pmatrix}
2 & 0 & a  \overline{z}\\
0 & K_{z}^{2}(B)& 0\\
\overline{a} z & 0 & 2
\end{pmatrix}
\end{equation} 
By Theorem \ref{thm:hpc1}, we know that $ker(K_{z}^{2}(T))=ker(K_{z}^{2}(N))$ for all 
$ z \in \mathbb{T}$, it forces $ker(K_{z}^{2}(B))$ to be equal to $\left\lbrace  0 \right\rbrace $ for every $ z \in \mathbb{T}$.
Using again Theorem \ref{thm:hpc1}, we deduce that $B$ is Harnack equivalent to $0$, thus $w(B)<1$.

Conversely, Let $T\in C_{2}(\mathbb{C}^{n})$ given  by \eqref{equ;harnakn}, then
we can write $K_{z}^{2}(T)$ under the form given by \eqref{Kz(T)}.
Since $B\in C_{2}(\mathbb{C}^{n-2})$ with $w(B)<1$, $B$ is Harnack equivalent to  $0$ in $C_{2}(\mathbb{C}^{n-2})$. Then  by Theorem \ref{harnack}, there exists $\alpha, \beta>0$ $(\alpha\leq 1$, $\beta \geq 1$) such that 
\begin{equation*}2\alpha I_{n-2}  \leq  K_{z}^{2} (B)\leq 2\beta I_{n-2} , \quad \text{ for all } z\in \mathbb{D}.
\end{equation*}
Thus
\begin{equation*}\alpha  K_{z}^{2} (N) \leq  K_{z}^{2} (T)\leq \beta K_{z}^{2} (N), \quad \text{ for all } z\in \mathbb{D}.
\end{equation*}
This means that $T$ is Harnack equivalent to $N$.
\end{proof}
\begin{theorem}Let $ N=
 \begin{pmatrix}
0 & a & 0 \\
0 & 0 & a\\
0 & 0 &  0
\end{pmatrix}$ such that  $\left|a\right|=\sqrt{2}$, then the Harnack parts of $N$ is the  set of all matrices of $C_{2}(\mathbb{C}^{3})$ of  the  form $$ T=a
 \begin{pmatrix}
0 & e^{-i\theta} & 0 \\
0 & 0 & e^{i\theta}\\
0 & 0 &  0
\end{pmatrix}, \quad \theta \in \mathbb{R}.$$ 
\end{theorem}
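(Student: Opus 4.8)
The plan is to invoke Theorem~\ref{thm:hpc1}. Write $N=aJ_3$ with $|a|=\sqrt2$. Since $J_3$ is nilpotent, $\sigma(N)=\{0\}$ and so $\Gamma_p(N)=\emptyset$; moreover $e^{i\gamma}J_3$ is unitarily equivalent to $J_3$ for every $\gamma$ while $\tfrac12(J_3+J_3^{*})$ has largest eigenvalue $2^{-1/2}$, hence $w(J_3)=2^{-1/2}$ and $w_2(N)=w(N)=|a|\,w(J_3)=1$. Thus $N\in C_2(\mathbb{C}^3)\cap\mathcal K(\mathbb{C}^3)$ fulfils the hypotheses of Theorem~\ref{thm:hpc1}, which reduces the problem to determining all $T\in C_2(\mathbb{C}^3)$ such that $\ker K_z^2(T)=\ker K_z^2(N)$ for every $z\in\mathbb{T}$.

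Next I would compute the $2$-kernel of $N$. Since $N^3=0$,
\[
K_z^2(N)=(I-\bar zN)^{-1}+(I-zN^{*})^{-1}=\begin{pmatrix}2&a\bar z&a^2\bar z^2\\ \bar az&2&a\bar z\\ \bar a^2z^2&\bar az&2\end{pmatrix}.
\]
Using $|a|^2=2$ one finds $\det K_z^2(N)=8(1-|z|^2)$ while the upper--left $2\times2$ minor equals $4-2|z|^2$. Hence for $z\in\mathbb{T}$ the positive operator $K_z^2(N)$ has rank exactly $2$, and solving $K_z^2(N)x=0$ (with $|z|=1$ and $|a|^2=2$) gives $\ker K_z^2(N)=\mathbb{C}\,v(z)$, where $v(z)=2e_1-\bar a^2z^2e_3$.

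For ``$\supseteq$'', put $U_\theta=\operatorname{diag}(1,e^{-i\theta},1)$, a unitary acting as the identity on $\operatorname{span}\{e_1,e_3\}$; then $U_\theta v(z)=v(z)$, so $U_\theta(\ker K_z^2(N))=\ker K_z^2(N)$ for all $z\in\mathbb{T}$. By Corollary~\ref{cor:hpc12}, $U_\theta^{*}NU_\theta$ is Harnack equivalent to $N$, and a direct computation gives $U_\theta^{*}NU_\theta=a\bigl(\begin{smallmatrix}0&e^{-i\theta}&0\\0&0&e^{i\theta}\\0&0&0\end{smallmatrix}\bigr)$, which belongs to $C_2(\mathbb{C}^3)$ because $w$ is unitarily invariant. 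So every matrix of the stated form lies in the Harnack part of $N$.

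For ``$\subseteq$'', let $T\in C_2(\mathbb{C}^3)$ with $T\stackrel{H}{\sim}N$. By Corollary~\ref{harnackequ}, $\Gamma(T)=\Gamma(N)=\emptyset$, so $r(T)<1$ and $K_z^2(T)=2I+\sum_{n\ge1}(\bar z^nT^n+z^nT^{*n})$ converges for $|z|\le1$. By Theorem~\ref{thm:hpc1}, $v(z)\in\ker K_z^2(T)$ for $z\in\mathbb{T}$, i.e.\ $2K_z^2(T)e_1=\bar a^2z^2K_z^2(T)e_3$ for all $z\in\mathbb{T}$. Expanding in powers of $z$ and comparing coefficients produces relations among the vectors $T^ne_1,\,T^{*n}e_1,\,T^ne_3,\,T^{*n}e_3$; the ones I would use are: from $z^2$, $T^{*2}e_1=\bar a^2e_3$; from $z^3$ (using $T^{*3}e_1=T^{*}(\bar a^2e_3)$), $T^{*}e_3=0$, i.e.\ the third row of $T$ vanishes; from $z^0$, $T^2e_3=a^2e_1$; from $z^{-1}$ (using $T^3e_3=T(T^2e_3)$), $Te_1=0$, i.e.\ the first column of $T$ vanishes; from $z^1$, $2T^{*}e_1=\bar a^2Te_3$. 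The vanishing of $T^{*}e_3$ and $Te_1$ gives $T=\bigl(\begin{smallmatrix}0&t_{12}&t_{13}\\0&t_{22}&t_{23}\\0&0&0\end{smallmatrix}\bigr)$; then $T^{*2}e_1=\bar a^2e_3$ forces $t_{12}\neq0$, $t_{22}=0$, $t_{23}=a^2/t_{12}$, and $2T^{*}e_1=\bar a^2Te_3$ forces $t_{13}=0$ together with $2\bar t_{12}=4/t_{12}$, i.e.\ $|t_{12}|=\sqrt2=|a|$. Writing $t_{12}=ae^{-i\theta}$ then yields $t_{23}=a^2/t_{12}=ae^{i\theta}$, so $T=a\bigl(\begin{smallmatrix}0&e^{-i\theta}&0\\0&0&e^{i\theta}\\0&0&0\end{smallmatrix}\bigr)$, which finishes the proof. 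The explicit form of $K_z^2(N)$, the vector $v(z)$, and the ``$\supseteq$'' reduction via Corollary~\ref{cor:hpc12} are routine; the main obstacle is this last paragraph --- isolating a sufficient finite subset of the coefficient relations coming from $2K_z^2(T)e_1=\bar a^2z^2K_z^2(T)e_3$ (several, e.g.\ the $z^0$ one once $T^{*}e_3=0$ and $T^{*2}e_1=\bar a^2e_3$ are in hand, turn out redundant) and carrying out the elementary case analysis. One may also observe that the resulting Harnack part is the orbit of $N$ under the torus $\{\operatorname{diag}(d_1,d_2,d_1):|d_1|=|d_2|=1\}$ of diagonal unitaries fixing every line $\mathbb{C}\,v(z)$.
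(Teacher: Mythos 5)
Your proof is correct, and while it rests on the same two pillars as the paper's argument --- the rank-one boundary kernel of $K_z^2(N)$ and Fourier-coefficient extraction from $K_z^2(T)v(z)=0$ on $\mathbb{T}$ --- it organizes both halves differently. For the inclusion of the Harnack part into the stated family, the paper first invokes the mapping theorem of \cite{Cassier_2} to get $T^2\stackrel{H}{\sim}N^2$, applies Theorem \ref{thm: nilp2} to pin down $T^2$, and only then extracts the two relations $2Te_3=a^2T^{*}e_1$ and $2T^{*}e_3=a^2Te_1$ from the kernel identity; you instead harvest five coefficients (of $z^{-1},z^0,z^1,z^2,z^3$) from the single identity $2K_z^2(T)e_1=\bar a^2z^2K_z^2(T)e_3$ and deduce the form of $T$ with no appeal to the mapping theorem or to the order-two nilpotent case --- I have checked each of your coefficient relations and the ensuing elimination ($T^{*}e_3=0$, $Te_1=0$, $t_{22}=t_{13}=0$, $|t_{12}|=|a|$, $t_{12}t_{23}=a^2$), and they are all correct, so this route is more self-contained. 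For the reverse inclusion, the paper verifies $\alpha K_z^2(N)\le K_z^2(T)\le\beta K_z^2(N)$ by hand via Sylvester's criterion on principal minors (and only sketches the estimates), whereas you recognize the candidates as the conjugates $U_\theta^{*}NU_\theta$ by diagonal unitaries fixing the kernel line $\mathbb{C}v(z)$ and invoke Corollary \ref{cor:hpc12}; this trades explicit domination constants for a cleaner structural argument, and it also makes transparent the closing remark that the Harnack part is a unitary orbit. One incidental point in your favour: your kernel vector $2e_1-\bar a^2z^2e_3$ (equivalently $-a^2\bar ze_1+2ze_3$) is the correct one; the vector $-a^2\bar ze_1+2ze_2$ appearing in the paper's proof is a typo ($e_2$ should be $e_3$), as the paper's own equation \eqref{equ:nilpo3} confirms.
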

\begin{proof}Let $ T\in C_{2}(\mathbb{C}^{3})$ such that $T{\stackrel{H}{\sim}} N$. By Corollary \ref{harnackequ}, the operator $T$ not admits eigenvalues in $\mathbb{T}$. Hence, $K_{z}^{2} (N)$ and  $ K_{z}^{2}(T)$ are uniformly bounded in $\mathbb{D}$ and may be extended to a positive operators on  $\overline{\mathbb{D}}$. Furthermore, by \cite[Theorem 5.2]{Cassier_2} $T^2 {\stackrel{H}{\sim}} N^2$, then by Theorem \ref{thm: nilp2}, the operator $T^2$ takes the following form
$$ T^2=
 \begin{pmatrix}
0 & 0 & a^2 \\
0 & b & 0\\
0 & 0 &  0
\end{pmatrix},$$ with $\left|b\right|<1$. If $b\neq 0$ then $Ker T^2=\mathbb{C}e_1$ is invariant by $T$, so $Te_1=xe_1$ but $0=T^2 e_1=x^2e_1$, this implies that $x=0$ and $Te_1=0$. Similarly, $ \mathbb{C}^3\neq Im T\supseteq Im T^2=span\{e_1, e_2\} $ is invariant by $T$, so $Te_2=ue_1+ve_2$ for some $u,v\in \mathbb{C}$. On the other hand, we have
$$K_{z}^{2}(N)= \begin{pmatrix}
2 &a  \overline{z} & a^2  \overline{z}^2\\
\overline{a} z & 2 & a  \overline{z}\\
\overline{a}^2 z^2 & \overline{a} z & 2
\end{pmatrix},$$
thus $\det (K_{z}^{2}(N))=4(2-\left|a\right|^{2}\left|z\right|^{2})$, so  $Ker(K_{z}^{2}(N))$ of rank one on $\mathbb{T}$. Let $v(z)=- a^2  \overline{z}e_1+2ze_2 $, then $K_{z}^{2}(N)v(z)=0$ on $\mathbb{T}$. Thus by Proposition \ref{prop2.3}, $K_{z}^{2}(T)v(z)=0$ on $\mathbb{T}$. This implies that
\begin{equation}\label{equ:nilpo3}
-a^2 e^{-i\theta} K_{1,\theta}^{2}(T)e_1 +2 e^{i\theta}  K_{1,\theta}^{2}(T) e_3 =0  \quad \text{ for all } \theta \in \mathbb{R}.
\end{equation}
Using \eqref{equ:nilpo3} in a similar way than before, we get  
\begin{equation}\label{equ:nilpo33}
 2T e_3 =a^2 T^{*}e_1 \quad \text{ and }\quad 2T^{*} e_3 =a^2 Te_1.
 \end{equation} 
By this we deduce that 
$$ \langle T e_3,e_1 \rangle=\frac{•a^2}{2}  \langle T^{*}e_1 ,e_1 \rangle= \frac{•a^2}{2} \langle e_1 ,Te_1 \rangle =0$$
and
$$ \langle T e_3,e_3 \rangle=\frac{•a^2}{2}  \langle T^{*}e_1 ,e_3 \rangle= \frac{•a^2}{2} \langle e_1 ,Te_3 \rangle =0.$$
 The matrix  $T$ take the form
 $$ T=
 \begin{pmatrix}
0 & u & 0 \\
0 & v & w\\
0 & 0 &  0
\end{pmatrix}.$$

By \eqref{equ:nilpo33}, $2w=a^2 T^*e_1=a^2\overline{u}e_2$, hence 
\begin{equation}\label{equ:nilpo333}
\overline{a}w=a\overline{u}.
 \end{equation} 
This implies that $u$ and $v$ must be not equal to $0$. Now the fact that
 $$ T^2=
 \begin{pmatrix}
0 & uv & uw \\
0 & v^2 & wv\\
0 & 0 &  0
\end{pmatrix}=\begin{pmatrix}
0 & 0 & a^2 \\
0 & b & 0\\
0 & 0 &  0 \end{pmatrix}.$$
implies that $v=b=0$ and

\begin{equation}\label{equ:nilpo3334}
uw=a^2.
\end{equation} 
 By \eqref{equ:nilpo333} and \eqref{equ:nilpo3334} we can deduce that $u=ae^{-i\theta}$  and $w=ae^{i\theta}$, $\theta \in \mathbb{R}$.

Conversely, Let $T\in C_{2}(\mathbb{C}^{3})$ given as  above, then
$$K_{z}^{2}(T)= \begin{pmatrix}
2 & u\overline{z}& a^2  \overline{z}^2\\
\overline{u} z & 2 & w\overline{z}\\
\overline{a}^2 z^2 & \overline{w} z & 2
\end{pmatrix}$$
By  a simple calculus, we can see that 
$$det( \beta K_{z}^{2}(N)-K_{z}^{2}(T))\geq \beta^3+ q(\beta)$$
with $q$ is a polynomial with constant coefficient of degree two on $\mathbb{T}$. so for $\beta$ sufficiently large, we assert that there exists a constant $r>0$ such that 
$$det( \beta K_{z}^{2}(N)-K_{z}^{2}(T))\geq \beta^3+ q(\beta)\geq r>0.$$
This is exactly the  principal minor of order $3$ of $\beta K_{z}^{2}(N)-K_{z}^{2}(T)$. Similarly, we can prove that the  principal minor of order $2$ is also positive for $\beta$ sufficiently large. By the Sylvester's criterion for the  positive-semidefinite Hermitian matrices, we assert that
$$ \beta K_{z}^{2}(N)-K_{z}^{2}(T))\geq 0\quad \quad  \text{ for all } z\in \overline{\mathbb{D}}.$$
We prove similarly the existence  of $0<\alpha \leq 1$ such that
$$K_{z}^{2}(T)-\alpha  K_{z}^{2}(N) \geq 0 \quad \quad  \text{ for all } z\in \overline{\mathbb{D}}.$$
\end{proof}
\begin{remark}We can see that the matrix $T$ given in the above theorem takes the form  
$$T=U_{\theta}^{*} N U_\theta \quad \text{ with } \quad U_\theta =
 \begin{pmatrix}
e^{i\theta} & 0& 0\\
0 & 1&0\\
0 & 0 &  e^{i\theta} 
\end{pmatrix},\qquad \theta \in \mathbb{R}.$$
Thus all the elements of the Harnack parts of $N$ are unitary equivalent to $N$. 
\end{remark}

\bibliographystyle{amsplain}

\begin{thebibliography}{99}

\bibitem{AnN} T.~Ando and K.~Nishio, 
\textit{ Convexity properties of
operator radii associated with unitary $\rho$-dilations}, 
Michigan\ Math.\ J.\  \textbf{20} (1973), 303--307.
%
\bibitem{AnSuTi} T.~Ando, I. Suciu and D.~Timotin, 
\textit{ Characterization of some Harnack parts of contractions}, 
 J. Operator Theory \textbf{2} (1979), 233--245.
%
\bibitem{BaCa} C.~Badea  and G.~Cassier, 
\textit{ Constrained von Neumann inequalities},  
 Adv.\ Math.\ \textbf{166} (2002), \ no.~2, 260--297.
%
\bibitem{BaTiSu} C.~Badea, D.~Timotin  and L. Suciu, 
\textit{ Classes of contractions and Harnack domination}, arXiv:1505.01972.
%
\bibitem{Be} C.~A.~Berger, 
\textit{ A strange dilation theorem}, 
Notices\ Amer.\ Math.\ Soc.\  \textbf{12} (1965), 590.

%
\bibitem{Cassier_1} G.~Cassier
\textit{ Ensembles K-spectraux et algèbres duales d'opérateurs}
Preprint LAFP N°2 (1991).

%
\bibitem{Cassier_2} G.~Cassier
\textit{ Mapping formula for functional calculus, Julia's lemma for operator and applications}
Acta\ Sci.\ Math.\ (Szeged)\  \textbf{74} (2008), no. 3-4, 783--805.


%
\bibitem{CaF_1}G. Cassier and T. Fack, 
\textit{Un noyau pour divers calculs fonctionnels},
C. R. Acad. Sci. Paris S\'er. I \textbf{317} (1993), 683-688.   

%
\bibitem{CaF_2}G. Cassier and T. Fack, 
\textit{Contractions in von Neumann algebras },   
J. Funct. Anal. \textbf{55} (1996), no. 2, 297--338.  
 
%
\bibitem{CaSu}G.~Cassier and N.~Suciu, 
\textit{ Mapping theorems and Harnack ordering for $\rho$-contractions},   
Indiana\ Univ.\ Math.\ J. \textbf{55} (2006), no. 2, 483--523.
%
\bibitem{CaZe}G.~Cassier and E. H.~Zerouali, 
\textit{ operator matrices in class  $C_\rho$},
Linear Algebra Appl. \textbf{420} (2007), 361--376.

\bibitem{Da}C.~Davis, 
\textit{ The shell of a Hilbert-space operator}, 
Acta\ Sci.\ Math.\ (Szeged)\   \textbf{29} (1968), 69--86.
%
\bibitem{DMcCW} M.~A.~Dritschel, S.~McCullough and H.~J.~Woerdeman,
\textit{ Model theory for $\rho$-contractions, $\rho\le2$},  
 J.\ Operator\ Theory\  \textbf{41} (1999), no. 2, 321--350.
%
\bibitem{Du} E.~Durszt, 
\textit{ On unitary $\rho$-dilations of operators},
 Acta\ Sci.\ Math.\ (Szeged)\ \textbf{27} (1966), 247--250.
%
\bibitem{FH}C.~K.~Fong and J.~A.~R.~Holbrook, 
\textit{ Unitarily invariant operator norms}, 
  Canad.\ J.\ Math.\  \textbf{35} (1983), no. 2, 274--299.
  
\bibitem{Foias}  C. Foia\c{s}, 
\textit{ On Harnack parts of contractions}, 
  Rev. Roum. Math. Pures et Appl. \textbf{ XIX} (1974), no. 3, 315-318.
%
\bibitem{H1} J.~A.~R.~Holbrook, 
\textit{ On the power-bounded operators of Sz.-Nagy and Foia\c{s}},
Acta\ Sci.\ Math.\ (Szeged)\  \textbf{29} (1968), 299--310.
%
\bibitem{H2} J.~A.~R.~Holbrook, 
\textit{Inequalities governing the operator radii associated with unitary $\rho $-dilations}, Michigan Math. J.  \textbf{18} (1971), 149--159.
 
\bibitem{Kub} C. S. Kubrusly,
 \textit{ An introduction to Models and Decompositions in Operator Theory},
Birkhauser, Boston, 1997.

\bibitem{KSS}V. A. Khatskevich, Yu. L. Shmul'yan and V. S. Shul'man, 
\textit{ Preorder and equivalences in the operator sphere},
Sibirsk. Mat. Zh. \textbf{32} (1991), no. 3 (in Russian); English transl. : Siberian Math. J. \textbf{32} (1991), no. 3, 496--506.
%
\bibitem{NO} T.~Nakazi and K.~Okubo, 
\textit{ $\rho$-contraction and $2\times 2$ matrix},  
 Linear\ Algebra\ Appl.\  \textbf{283} (1998), no. 1-3, 165--169.
%
%
%

%
\bibitem{SzN} B.~Sz.-Nagy, 
\textit{ Sur les contractions de l'espace de Hilbert},  
 Acta\ Sci.\ Math.\ (Szeged)\  \textbf{15} (1953), 87--92(French).
%
\bibitem{SzNF1} B.~Sz.-Nagy and C. Foia\c{s}, 
\textit{ On certain classes of power-bounded operators in Hilbert space}, 
Acta\ Sci.\ Math  .\ (Szeged)\  \textbf{27} (1966), 17--25.
%
\bibitem{SzNF2}
B.~Sz.-Nagy and C. Foia\c{s}, 
\textit{Similitude des op\'{e}rateurs de class ${\mathcal C}\sb{\rho }$ \`{a} des
contractions}, 
 C.\ R.\ Acad.\ Sci.\ Paris\ S\'{e}r.\ A-B\ 
\textbf{264} (1967), A1063--A1065 (French).
%
\bibitem{SzNF}
B. Sz.-Nagy, C. Foias, H. Bercovici and L. K\'erchy,
\textit{ Harmonic analysis of operators on Hilbert space}.
 Second edition. Revised and enlarged edition. Universitext. Springer, New York, 2010.
%
\bibitem{W}J. P. Williams,
\textit{ Schwarz norms for operators}, 
Pacific J. Math.  \textbf{24} (1968), 181--188.
%
\bibitem{YoKa} K.\ Yosida, S.\ Kakutani,
\textit{ Operator-theoretical treatment of
Markov's process and mean ergodic theorem},
 Ann. of Math., \textbf{42} (1941), 188--228.

\end{thebibliography}

\end{document}